\newcommand{\Z}{\mathbb{Z}}
\renewcommand{\epsilon}{\ensuremath\varepsilon}
\renewcommand{\phi}{\ensuremath{\varphi}}
\newtheorem{theorem}{Theorem}[section]
\newtheorem{lemma}[theorem]{Lemma}
\newtheorem{claim}[theorem]{Claim}
\newtheorem*{claim*}{Claim}
\newtheorem{proposition}[theorem]{Proposition}
\newtheorem{remark}[theorem]{Remark}
\newtheorem{problem}[theorem]{Problem}
\newtheorem{subclaim}[theorem]{Subclaim}
\newenvironment{claimproof}[1][\proofname]{\proof[#1]}{\endproof}
\newenvironment{subclaimproof}[1][\proofname]{\proof[#1]}{\endproof}
\theoremstyle{definition}
\newtheorem{definition}[theorem]{Definition}
\newtheorem*{definition*}{Definition}
\author
{
Ema Skottova
}
\thanks{E.S.: Department of Mathematics, ETH Z\"{u}rich, Switzerland.}
\thanks{\texttt{eskottova@ethz.ch}}
\author
{
Raphael Steiner
}
\thanks{R.S.: Department of Mathematics, ETH Z\"{u}rich, Switzerland.}
\thanks{\texttt{raphaelmario.steiner@math.ethz.ch}.
\thanks{The research of R.S. was funded by the SNSF Ambizione Grant No. 216071 of the Swiss National Science Foundation.}
}
\date{\today}
\title{Critical edge sets in vertex-critical graphs}
\begin{document}
\maketitle

\begin{abstract}
\emph{Criticality} is a fundamental notion in graph theory that has been studied continually since its introduction in the early 50s by Dirac. A graph is called \emph{$k$-vertex-critical ($k$-edge-critical)} if it has chromatic number $k$ but the removal of any vertex (edge) lowers the chromatic number to $k-1$. A set of edges in a graph is called \emph{critical} if its removal reduces the chromatic number of the graph.

In 1970, Dirac conjectured a rather strong distinction between the notions of vertex- and edge-criticality, namely that for every $k\ge 4$ there exists a $k$-vertex-critical graph that does not have any critical edges. This conjecture was proved for $k\ge 5$ by Jensen in 2002 and remains open only for $k=4$. A much stronger version of Dirac's conjecture was proposed by Erd\H{o}s in 1985: Let $k\ge 4$ be fixed, and let $f_k(n)$ denote the largest integer such that there exists a $k$-vertex-critical graph of order $n$ in which no set of at most $f_k(n)$ edges is critical. Is it true that $f_k(n)\rightarrow \infty$ for $n\rightarrow \infty$?

Strengthening previous partial results, we solve this problem affirmatively for all $k>4$, proving that $$f_k(n)=\Omega(n^{1/3}).$$ This leaves only the case $k=4$ open. We also show that the stronger lower bound $f_k(n)\ge \Omega(n^{1/2})$ holds along an infinite sequence of numbers $n$. Finally, we provide a first non-trivial upper bound on the functions $f_k$ by proving that $$f_k(n)=O\left(\frac{n}{(\log n)^{\Omega(1)}}\right)$$ for every $k\ge 4$. Our proof of the lower bound on $f_k(n)$ involves an intricate analysis of the structure of proper colorings of a modification of an earlier construction due to Jensen, combined with a gluing operation that creates new vertex-critical graphs without small critical edge sets from given such graphs. The upper bound is obtained using a variant of Szemer\'{e}di's regularity lemma due to Conlon and Fox. \end{abstract}

\section{Introduction}
The chromatic number $\chi(G)$ of a graph $G$ is one of the most fundamental and well-studied graph parameters with applications in combinatorics, scheduling and optimization, whose study dates back at least to 1852 when Francis Guthrie posed his nowadays famous \emph{Four-Color-Problem}. Despite tremendous amounts of effort put into understanding this parameter by graph theorists throughout the last century, many basic questions and conjectures about its properties still remain open today. We refer to the recent surveys of Norin and Scott~\cite{norin,scott} for summaries of some selected seminal work and open problems on the chromatic number. 

In many of these open problems, one is given a particular graph class $\mathcal{G}$ and aims to prove that the chromatic number of all graphs in $\mathcal{G}$ is less than some constant 
$k\in \mathbb{N}$. Typically, in these problems the class $\mathcal{G}$ is \emph{hereditary}, i.e., closed under taking induced subgraphs. In such cases, a powerful general approach towards proving the desired upper bound for the chromatic number of graphs in $\mathcal{G}$ is to go by contradiction: Consider a supposed \emph{smallest counterexample} $G\in \mathcal{G}$, i.e. a smallest graph in the class satisfying $\chi(G)\ge k$, and prove so many properties about this smallest counterexample that eventually a contradiction is reached, concluding the proof. One easily checks that every such smallest counterexample $G$ must have the property that $\chi(G)=k$, but for every vertex $v$ of $G$, the graph $G-v$ obtained from $G$ by removing $v$ results in a $(k-1)$-colorable graph. If the graph class $\mathcal{G}$ is additionally closed under taking arbitrary subgraphs, then one even knows that any graph obtained from $G$ by removing an edge is $(k-1)$-colorable. Motivated by this, the notion of \emph{critical} graphs has emerged and has been continually studied in graph theory since the early 1950s (see~\cite{dirac} for one of the first papers on critical graphs and~\cite{jensentoft,kostochka,stiebitz} for surveys and books on the subject), with applications beyond pure graph coloring problems such as to Tur\'{a}n numbers, see~\cite{simonovits} for an example. 

A vertex, edge or set of edges in a graph $G$ is called \emph{critical} if its removal from $G$ creates a graph with smaller chromatic number than $G$. Given an integer $k\ge 1$, we say that a graph $G$ with $\chi(G)=k$ is \emph{$k$-vertex-critical} if every vertex of it is critical, and we say that it is \emph{$k$-edge-critical} if every edge is critical. Finally, it is called \emph{$k$-critical} if it is both $k$-vertex- and $k$-edge-critical. Several fundamental problems concerning these notions remain open, one of which is the degree to which they relate to each other. While it is easy to see that every $k$-edge-critical graph without isolated vertices is also $k$-vertex-critical, it is not too difficult to construct examples where the converse does not hold. In 1970, Dirac (cf.~\cite{lattanzio,erdos}) put forth a striking conjecture that claims an even stronger distinction between the notions: He conjectured that for every integer $k\ge 4$ there exists a graph that is $k$-vertex-critical, but at the same time \emph{none} of its edges is critical, i.e., removing any edge from the graph does not alter the chromatic number. We remark that the condition $k\ge 4$ in Dirac's conjecture is necessary: For $k=3$ one can easily show that the only $3$-vertex-critical graphs are the odd cycles, and these also have critical edges (in fact, they are also edge-critical). For a long time, not even a single vertex-critical graph was known that does not have critical edges. The first construction of such a graph, resolving the $k=5$ case of Dirac's conjecture, was provided by Brown~\cite{brown} in 1992. It took $10$ more years for further cases of Dirac's conjecture to be resolved: First, Lattanzio~\cite{lattanzio} proved an affirmative answer to Dirac's conjecture for all integers $k\ge 4$ such that $k-1$ is not a prime number. Independently, Jensen~\cite{jensen} managed to prove the conjecture for all $k\ge 5$. Jensen's result remains the state of the art on Dirac's conjecture, notably with the intriguing case $k=4$ still left open after more than 50 years. The only partial result addressing this case in the literature is due to Jensen and Siggers~\cite{jensensiggers}, who proved for every $\varepsilon>0$ the existence of a $4$-vertex-critical graph in which less than an $\varepsilon$-fraction of the edges are critical.

A much stronger version of Dirac's conjecture was discussed in an open problem by Erd\H{o}s~\cite{erdos} in 1985, and has been reiterated in several books and problem collections later (see for instance \cite{chungraham,jensentoft,bloom,chungrahamonline}): Does there, for every $k\ge 4$ and $r\ge 1$, exist a $k$-vertex-critical graph $G$ such that no set of at most $r$ edges in $G$ is critical? In the following, let us call a graph with this property a \emph{$(k,r)$-graph}. The case $r=1$ corresponds to Dirac's conjecture, but apart from the previously mentioned results on the latter not much was known on this problem by Erd\H{o}s. Jensen~\cite{jensen} showed that for every $k\ge 5, r\ge 1$ there exists a $k$-vertex-critical graph in which no set of edges of size at most $r$, all incident to the same vertex, is critical. This however still does not solve the problem for any $r\ge 2$. Rather recently, using a probabilistic argument based on the solution of Shamir's problem~\cite{kahn,parkpham} on matchings in random hypergraphs, Martinsson and the second author~\cite{martinsson} provided a partial solution to Erd\H{o}s's problem by proving that for every fixed $r\ge 1$ and every sufficiently large integer $k$ (at least some quickly growing function of $r$), there indeed exists a $(k,r)$-graph. However, this result does not say anything about the case when $k$ is fixed and $r$ tends to infinity.

Erd\H{o}s also suggested a yet stronger version of his problem, as follows. 

\begin{problem}[cf.~Problem on page 66 of~\cite{chungraham} and~\cite{chungrahamonline, erdos}]\label{prob2}
Let $k\ge 4$. For every $n\in \mathbb{N}$ let $f_k(n)$ denote the largest integer $r\ge 1$ such that there exists a $(k,r)$-graph on $n$ vertices ($f_k(n):=0$ if no such graph exists for any $r\ge 1$). Is it true that $f_k(n)\rightarrow \infty$ for $n\rightarrow \infty$?
\end{problem}
In case of a positive answer, Erd\H{o}s further suggested estimating the asymptotic growth of $f_k(n)$. 

\medskip

\paragraph*{\textbf{Our results.}} In this work, we solve the aforementioned problems of Erd\H{o}s for all $k\ge 5$, leaving only $k=4$ open. 

Concretely, we show that $(k,r)$-graphs exist for every $k\ge 5$, $r\ge 1$. This result significantly strengthens all previous results on the problem, including both Jensen's~\cite{jensen} solution of the cases $(k,1)$ for $k\ge 5$ as well as the aforementioned result by Martinsson and the second author~\cite{martinsson}. Moreover, we obtain the following stronger quantitative result, which in particular shows that $f_k(n)\rightarrow \infty$ for every $k\ge 5$ and thus resolves also Problem~\ref{prob2} with the exception of $k=4$. 

\begin{theorem}\label{thm:lowerbound}
For every fixed $k\ge 5$ we have
$$f_k(n)=\Omega(n^{1/3}).$$
\end{theorem}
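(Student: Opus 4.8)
\medskip

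\noindent\textbf{Proof plan.} The plan has three components: a ``seed'' graph with extremely rigid colouring behaviour; a gluing operation that stitches copies of such graphs together while preserving vertex-criticality and accumulating edge-robustness; and an amplification step that balances the two scales to reach the exponent $1/3$.

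\smallskip

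\noindent\emph{A rigid seed.} For each fixed $k\ge 5$ we will take Jensen's~\cite{jensen} $k$-vertex-critical, critical-edge-free graph and modify it into a graph $H_k$ of constant order carrying a distinguished constant-size \emph{interface} $S=S(H_k)$ of vertices, along which copies will later be glued. The technical heart of this component is a structural lemma we aim to prove about \emph{all} proper $(k-1)$-colourings of $H_k$ that survive a bounded number of edge deletions: there should be a constant $C=C(k)$ and a fixed finite list $\mathcal{P}$ of colour patterns on $S$ so that, for every edge set $F\subseteq E(H_k)$ with $|F|\le C$, every proper $(k-1)$-colouring of $H_k-F$ (if one exists) restricts on $S$ to a pattern in $\mathcal{P}$, and every pattern in $\mathcal{P}$ arises only if $F$ meets certain prescribed ``escape'' locations inside $H_k$. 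In particular $H_k$ should be a $(k,1)$-graph, and deleting few edges should neither make it $(k-1)$-colourable nor enlarge the repertoire of interface patterns it displays. Establishing this amounts to tracing how colours are forced to propagate through the gadgets underlying Jensen's construction, and the precise modification of that construction will be dictated by the twin requirements that $\mathcal{P}$ be small and that the locations of the necessary deletions be pinned down.

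\smallskip

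\noindent\emph{Gluing.} Given graphs $G_1,\dots,G_t$ that are $k$-vertex-critical, admit no critical edge set of size $\le r$, and carry interfaces of the above kind, we will form a graph $G_1\oplus\dots\oplus G_t$ by attaching all of them to a connecting gadget that merges their interfaces in a mutually \emph{incompatible} way: the gadget is engineered so that an interface pattern in $\mathcal{P}$ that one copy realises only after expensive deletions cannot be matched simultaneously by the other copies, and so that the gadget itself can be rendered harmless only by deleting many of its edges. We then need to check that the result is again $k$-chromatic and $k$-vertex-critical — this is where the gadget must be designed so as not to over-constrain, ensuring that $(G_1\oplus\dots\oplus G_t)-v$ stays $(k-1)$-colourable for every vertex $v$ — and that it is a $(k,r')$-graph with $r'$ growing essentially linearly in $t$ and at least as $r$. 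The idea is that a proper $(k-1)$-colouring of the glued graph minus $F$ forces $F$, once distributed among the $t$ copies and the gadget, to pay an unaffordable deletion toll either inside every copy or throughout the gadget; the connecting gadget itself will have order polynomial in $t$.

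\smallskip

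\noindent\emph{Amplification, and the main obstacle.} Applying the gluing to $t$ copies of the constant-order seed $H_k$ already produces a $(k,\Omega(t))$-graph; tuning the connecting gadget to its minimum size makes the order $\Theta(t^2)$, which yields $f_k(n)=\Omega(n^{1/2})$ along the infinite sequence of orders realised this way. To cover \emph{every} sufficiently large $n$ we would apply the gluing once more, now using the $(k,\Omega(\sqrt m))$-graphs on $m$ vertices just obtained as building blocks, gluing a suitable number of them, and adjusting a bounded-size vertex-critical padding gadget attached to the interface so that the total order is exactly $n$; a short optimisation of the two scales (the size and the number of blocks) leaves robustness of order $\Omega(n^{1/3})$, which is the theorem. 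The main obstacle is the interplay between the rigidity lemma and the incompatibility demand on the gadget: the modification of Jensen's construction must be chosen so that the few escape routes of $H_k$ under edge deletion leave \emph{disjoint} fingerprints on the interface, for otherwise a bounded number of deletions inside one copy could manufacture an interface pattern that another copy realises for free, collapsing the whole glued graph to $(k-1)$-colourability and wiping out the amplification. Ruling out this cross-talk — together with verifying that the gluing genuinely preserves vertex-criticality — is the delicate core, and the reliance on Jensen's construction is exactly why the method, and hence our bound, does not reach $k=4$.
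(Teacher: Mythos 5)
Your proposal is a plan rather than a proof, and it rests on two unproven lemmas that differ substantially from what the paper actually establishes; the more problematic of the two is your gluing lemma.

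The paper does \emph{not} use an $r$-boosting gluing. Its Lemma~\ref{lem:glue} takes $(k,r)$-graphs $G_1,\dots,G_t$ together with a $k$-\emph{critical} graph $H$ on $t$ edges, attaches each $G_i$ across an edge $e_i=u_iw_i$ of $H$ via a bipartition of a neighbourhood inside $G_i$, and produces a $(k,r)$-graph of order $h+\sum(n_i-1)$ — \emph{the parameter $r$ is preserved, not increased}. The purpose of that operation is purely to control the order. The actual source of large $r$ in the paper is a single $3$-parameter circulant graph $G_{k,m,q}$ with $m=\Theta(r)$, for which Theorem~\ref{main-thm-short} is proved by a long periodicity analysis (Lemma~\ref{partial-periodicity-lemma}) of all proper $(k-1)$-colourings after deleting $\le r$ edges. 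Your proposal instead takes a \emph{constant-order} seed $H_k$ (so $r=O(1)$ for the seed) and claims that gluing $t$ copies through a gadget yields a $(k,\Omega(t))$-graph. That is a qualitatively stronger claim than anything the paper proves, and I see no reason to believe it is achievable by the gadgetry you describe: if the connecting gadget has $\Theta(t^2)$ vertices, it contains vertices far from any copy of $H_k$, and nothing in your sketch explains why deleting a bounded edge set near such a vertex cannot simply free up a $(k-1)$-colouring — your ``disjoint fingerprints'' requirement is precisely the unresolved core, and you say so yourself. Note that the paper's $k$-critical attachment graph $H$ is deliberately weak (a single edge deletion already makes $H$ $(k-1)$-colourable), and the whole point of Lemma~\ref{lem:glue} is that this weakness does not matter because breaking $H$ at $e_i$ forces $u_i,w_i$ to share a colour, which pushes the problem into $G_i$; this only ever gives robustness $r$, never $\Omega(t)$. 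Your amplification arithmetic ($t$ seeds give order $t^2$, robustness $t$; re-gluing $s$ such blocks of order $m$ gives $n\approx sm$ and robustness $\max(\sqrt m,s)$, optimised at $n^{1/3}$) is internally consistent, but it is built on a gluing lemma you have not supplied, and there is no analogue of it in the paper. You also need your ``rigidity'' lemma for the seed, which would play the role of the paper's hardest result (Lemma~\ref{partial-periodicity-lemma}) but for a modified Jensen gadget with a controlled interface; again this is asserted, not proved. So the overall architecture (seed + glue + rescale) superficially resembles the paper's (circulant + glue + Euclidean division), but the load-bearing lemmas are different in kind, and the two you rely on are both open in your write-up.
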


In fact, we obtain a better lower bound of order $\sqrt{n}$ along an infinite sequence of numbers $n$, as can be derived from the following, more specific, result by setting $n:=8(k-1)(18r+3)(6r+3)+1$.

\begin{theorem}\label{main-thm-short}
    Let $k\ge 5$, $r \geq 1$ and $n$ be integers such that $n\ge 8(k-1)(18r+3)(6r+3)+1$ and $n\equiv 1 \pmod{8(k-1)(18r+3)}$. Then there exists a $(k,r)$-graph of order $n$.
\end{theorem}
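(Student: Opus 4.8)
First I would reformulate the problem in terms of colorings. A set $F$ of at most $r$ edges of a $k$-chromatic graph $G$ is critical exactly when $G-F$ has a proper $(k-1)$-coloring, i.e. when some map $V(G)\to\{1,\dots,k-1\}$ has all of its monochromatic edges inside $F$. Hence, writing $\beta(G)$ for the minimum number of monochromatic edges over all $(k-1)$-colorings of $G$, a graph $G$ is a $(k,r)$-graph if and only if it is $k$-vertex-critical and $\beta(G)\ge r+1$. The task then becomes: for each $n$ as in the hypotheses, build a $k$-vertex-critical graph of order $n$ with $\beta\ge r+1$. The construction has two ingredients: an engineered \emph{building block} $B=B(k,r)$ on $M:=8(k-1)(18r+3)$ vertices, and a \emph{gluing operation} that assembles $t$ copies of $B$ into a $k$-vertex-critical graph $G_t$ of order $Mt+1$ whose value of $\beta$ grows linearly in $t$. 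Letting $t$ range over all integers with $t\ge 6r+3$ then makes $\beta(G_t)\ge r+1$ and produces a $(k,r)$-graph of every order $n\equiv 1\pmod M$ with $n\ge(6r+3)M+1$, as required; the restriction $k\ge 5$ enters only through the availability of the block.

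For the block I would start from Jensen's $k$-vertex-critical graph with no critical edge~\cite{jensen}, which exists for every $k\ge 5$ and already satisfies $\beta\ge 2$, and modify it in two ways. First, an internal size parameter is enlarged and the construction is ``thickened'' so that every $(k-1)$-coloring is forced to create more than $r$ monochromatic edges, not just one; the defect analysis behind this is what pins down the value of $M$. Second, the block is equipped with a small \emph{interface} --- a distinguished vertex together with a controlled set of its neighbors --- and one proves two rigidity statements about the proper $(k-1)$-colorings of $B$: $(a)$ every such coloring realizes only a restricted pattern of colors on the interface, and $(b)$ deleting any single vertex of $B$ makes the interface fully flexible, i.e. $B$ minus a vertex has proper $(k-1)$-colorings realizing every admissible interface pattern. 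Proving $(a)$ and $(b)$ amounts to tracking the forced-color structure of Jensen's construction through the thickening, and is the technical heart of the block.

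The gluing operation takes $t$ vertex-disjoint copies $B_1,\dots,B_t$ of the block and joins them through their interfaces by a small amount of extra structure (a single connecting vertex, which accounts for the ``$+1$''), arranged so that a proper $(k-1)$-coloring of $G_t$ restricts on each $B_i$ to a proper $(k-1)$-coloring, all of these sharing one common interface pattern, and so that for every admissible pattern at least one of the $t$ copies is incompatible with it. This forces $\chi(G_t)=k$. For vertex-criticality one checks that deleting any vertex restores $(k-1)$-colorability: removing a vertex $u$ from a copy $B_i$ makes $B_i-u$ flexible by $(b)$ and hence harmless, after which the remaining $t-1$ copies together with the connecting vertex can be colored with a common pattern --- here one needs that $t-1$ copies do not yet create the obstruction, and this is exactly what dictates how the connecting structure must be built; deleting the connecting vertex or a vertex of the interface is handled the same way. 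Finally, any $(k-1)$-coloring of $G_t$ fixes an interface pattern, and by $(a)$ a constant fraction of the $t$ copies are incompatible with it and hence each carries many monochromatic edges (by the thickening), so $\beta(G_t)$ grows linearly in $t$ and exceeds $r$ once $t\ge 6r+3$.

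The hard part, as I see it, is the precision that vertex-criticality demands. In contrast to arguments showing that $G$ has no single critical \emph{edge}, which only need to locate one rigid coloring, here one has to control \emph{all} proper $(k-1)$-colorings of the thickened Jensen block --- both of $B$ itself and of $B$ with an arbitrary vertex deleted --- accurately enough to determine exactly which interface patterns occur, and then make the bookkeeping through the block, the thickening and the gluing line up so that $t$ copies destroy $(k-1)$-colorability while the removal of any single vertex restores it. This exact structural description of all colorings of a fairly intricate graph is where the real work lies, and it is also what fixes the constants $8(k-1)(18r+3)$ and $6r+3$ appearing in the statement.
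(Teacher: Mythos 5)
Your proposal takes a fundamentally different route from the paper, and in its present form the gluing argument has internal inconsistencies that would need repair before it could work.

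The paper does not build Theorem~\ref{main-thm-short} from disjoint blocks joined at a single connecting vertex. It defines a single circulant graph $G_{k,m,q}$ on $N = q\,n_{k,m}+1$ vertices, where $n_{k,m}$ is a ``period length'' of $(k-1)m$ or $2(k-1)m$ (so the ``$+1$'' is the one extra vertex that prevents the periodic color pattern from closing up around the cycle, rather than a connecting vertex between copies). The distance set of $G_{k,m,q}$ thickens Jensen's original construction by replacing two-element endpoint sets with full intervals. The core argument (Lemma~\ref{partial-periodicity-lemma}) shows that every proper $(k-1)$-coloring of $G_{k,m,q}-R$, for $|R|\le r$, must be $n_{k,m}$-periodic along the circular order wherever there is a long stretch of vertices untouched by $R$; since $N\equiv 1\pmod{n_{k,m}}$, this periodicity forces $v_0$ and $v_{N-1}$ to receive the same color although they are adjacent, a contradiction. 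There is no decomposition into vertex-disjoint subgraphs, no interface patterns, and no per-copy incompatibility argument; the ``obstruction'' is the global mismatch between $N$ and the period. A gluing lemma (Lemma~\ref{lem:glue}) does appear in the paper, but it is used only afterwards, to pass from the congruence-constrained orders of Theorem~\ref{main-thm-short} to arbitrary large orders in Theorem~\ref{thm:lowerbound}; it glues full $(k,r)$-graphs into a $k$-critical template and plays no role in the proof of Theorem~\ref{main-thm-short} itself.

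As for the proposal on its own terms: there is a genuine gap in the gluing logic. You stipulate simultaneously that a proper $(k-1)$-coloring of $G_t$ ``restricts on each $B_i$ to a proper $(k-1)$-coloring, all of these sharing one common interface pattern,'' and that ``for every admissible pattern at least one of the $t$ copies is incompatible with it.'' If the $t$ copies are isomorphic and attached to the connecting vertex in the same way, these two statements cannot both describe a coherent mechanism: a pattern is either realizable by the block or not, and in the latter case it would be incompatible with all copies, not ``a constant fraction''---which in turn undermines the claim that $\beta(G_t)$ grows linearly in $t$. (If a pattern is bad, a minimizing $(k-1)$-coloring of $G_t$ can ignore the pattern constraint on a single copy and pay its monochromatic edges only there, so $\beta$ would not scale with $t$.) Relatedly, rigidity statement~$(a)$ refers to ``proper $(k-1)$-colorings of $B$,'' but you have also required $B$ to have $\beta(B)>r$, hence $\chi(B)=k$, hence no proper $(k-1)$-colorings at all; $(a)$ is then vacuous and cannot carry the incompatibility argument. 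To salvage this route you would need to be much more careful about what object the interface pattern is a pattern \emph{of}, and you would need the copies to be attached in $t$ genuinely different ways (rotated/reflected interfaces), so that one fixed pattern can be simultaneously compatible with some copies and incompatible with others, together with an argument that a positive fraction of copies must be incompatible in \emph{every} $(k-1)$-coloring. Finally, a small but telling point: you say ``$k\ge 5$ enters only through the availability of the block,'' but in the paper's construction the restriction $k\ge 5$ is substantive---for $k=4$ the distance sets $D_2,D_3$ collapse to $\emptyset$ and the resulting circulant graph is $3$-chromatic, so the method itself (not just an ingredient) fails at $k=4$.
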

To derive Theorem~\ref{thm:lowerbound} from Theorem~\ref{main-thm-short}, one needs to find a way to create $(k,r)$-graphs of any given (sufficiently large) order $n$. This is done using an additional bootstrapping argument, making use of a graph gluing operation that preserves the property of being a $(k,r)$-graph while allowing to manipulate the order of the graph.

As for upper bounds, it is trivial to see that $f_k(n)< \frac{
n-1}{k-1}$ for every $k\ge 2$ and $n\in \mathbb{N}$: Indeed, let $G$ be any $k$-vertex-critical graph of order $n$. Let $v\in V(G)$ be arbitrarily chosen. Now consider any proper $(k-1)$-coloring of the graph $G-v$. One of the $(k-1)$ colors used in this coloring appears at most $\frac{n-1}{k-1}$ times in $G-v$, and removing any edges from $v$ to a vertex of this color from $G$ results in a $(k-1)$-colorable graph. Our next and final result provides a first non-trivial asymptotic improvement of the upper bound: for every $k\ge 4$ the function $f_k(n)$ grows sublinearly in $n$.
\begin{theorem}\label{thm:upperbound}
There exists an absolute constant $c>0$ such that for every $k\ge 4$ we have
$$f_k(n)=O\left(\frac{n}{(\log n)^c}\right).$$
\end{theorem}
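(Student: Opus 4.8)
The plan is to show that any $k$-vertex-critical graph $G$ on $n$ vertices with no small critical edge set cannot be too "spread out" — more precisely, that it must contain a large dense structure, which is incompatible with being vertex-critical. The key tool, as announced in the abstract, is the sparse-regularity / embedding variant of Szemerédi's regularity lemma due to Conlon and Fox, which applies to graphs that are ``locally dense'' even when globally sparse. So the first step is to fix a $(k,r)$-graph $G$ on $n$ vertices and observe what $r$ being large buys us: for every vertex $v$ and every proper $(k-1)$-coloring $c$ of $G-v$, each of the $k-1$ color classes of $c$ must send at least $r+1$ edges to $v$ (otherwise removing those few edges would make $G$ itself $(k-1)$-colorable), so in particular $d(v) \ge (k-1)(r+1)$ and, more usefully, $v$ has at least $r+1$ neighbors in each color class of every such coloring. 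This is the local density we will exploit.

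The heart of the argument is to convert this robustness into the statement that $G$ contains a subgraph on a not-too-small vertex set with density bounded below by roughly $1/(k-1)$, or at least a bipartite-like dense pair, and then to feed this into the Conlon–Fox regularity lemma to extract a regular pair $(A,B)$ with positive density and $|A|,|B|$ polynomial in $n$. Once we have such a dense regular pair, a standard embedding/counting argument inside it produces a $K_k$ (or, with a bit more care, a $k$-critical subgraph such as a suitably chosen odd wheel or a copy of $K_{k+1}$ minus a perfect matching) — but the precise target is: find an induced subgraph $H \subseteq G$ with $\chi(H) = k$ such that $H$ is ``rigid,'' meaning every proper $(k-1)$-coloring of $G$ minus any single edge restricts to a coloring of $H$ forbidden by $H$'s structure. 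Concretely, we want to embed a copy of a $k$-color-critical graph $F$ on many vertices inside a regular tuple, because then deleting any one edge of $G$ still leaves all of $F$ intact (the deleted edge hits at most one edge of $F$ if we embed $F$ with large ``multiplicity''), contradicting vertex-criticality of $G$: if $H$ (hence $G$) stays $k$-chromatic after deleting any edge, then no edge of $G$ is critical, contradicting that $G$ is $k$-vertex-critical (since a vertex-critical graph of chromatic number $k\ge 4$... wait — vertex-critical graphs can lack critical edges, that's the whole point). The correct contradiction is instead with the upper bound $r$: we show that if $G$ is a $(k,r)$-graph with $r$ large, the embedded dense structure forces the existence of a set of $O(\mathrm{poly}(r))$ edges whose removal drops $\chi$, i.e. $r$ cannot exceed some function that is $o(n/(\log n)^c)$.

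Let me restate the intended mechanism more carefully as the main steps. Step 1: from $(k,r)$-graph-ness, every vertex has $\ge r+1$ neighbors in each class of every $(k-1)$-coloring of its complement-deleted graph; summing/averaging, either $G$ has $\ge \eps n^2$ edges (dense case) or else there is a localized dense spot. Step 2: apply the Conlon–Fox regularity lemma to $G$ to obtain, in polynomially-many vertices, a bounded number of $\eps$-regular pairs of density $\ge d$ for suitable constants; the point of using Conlon–Fox rather than classical regularity is precisely that it still yields useful regular pairs on vertex sets of size $n/(\log n)^{O(1)}$ even when $G$ is sparse, which is what produces the $(\log n)^c$ in the bound. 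Step 3: inside a regular pair of density $d$ on parts of size $m = n/(\log n)^{O(1)}$, greedily locate $t = \Theta(m)$ vertex-disjoint copies of a fixed $k$-critical graph $F_0$ (e.g. $K_k$), or a single copy of the ``blown-up'' critical graph $F_0[t]$; then the set $S$ of the $\binom{k}{2}$ edges of one copy is critical, but more to the point, by a pigeonhole over the $t$ copies, there is a copy none of whose edges was among the $r$ ``forbidden-to-be-critical'' edges — forcing $r \ge \Omega(t) = \Omega(m) = \Omega(n/(\log n)^c)$, which is a contradiction once $r$ exceeds this, hence $f_k(n) = O(n/(\log n)^c)$. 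The main obstacle — and the step requiring genuine care — is Step 3: a single copy of $K_k$ does not immediately give a critical \emph{set of size $> 1$}, so one must find a critical set that is \emph{robust}, namely a collection of $\Omega(m)$ pairwise ``independent'' critical edge sets (e.g. the $\Omega(m)$ disjoint triangles in the $k=3$-flavoured gadget, or disjoint $K_k$'s all sharing the property that $G$ minus any one of them is $(k-1)$-colorable); arranging that the chromatic number genuinely drops when exactly one small set is removed, uniformly over a polynomial-sized family, is where the structure of vertex-critical graphs must be used (e.g. via the fact that $G - v$ is $(k-1)$-colorable and extending such a coloring across the embedded dense part). A secondary technical point is checking that the Conlon–Fox lemma's hypotheses (some pseudorandomness / no-dense-spot condition, or alternatively their version that has no such hypothesis at the cost of the $(\log n)^c$ loss) are met by an arbitrary $(k,r)$-graph, which is exactly why the final exponent $c$ is an unspecified positive constant rather than something explicit.
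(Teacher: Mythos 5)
Your proposal correctly identifies the Conlon--Fox regularity lemma as the tool and correctly observes that in a $(k,r)$-graph every vertex must have at least $r+1$ neighbors in each color class of every proper $(k-1)$-coloring of its vertex-deleted subgraph. However, the mechanism you build around these ideas has several genuine gaps, and the paper's actual proof goes a fundamentally different route.

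First, the Conlon--Fox lemma in the form used here (Lemma~\ref{lem:conlonfox}) does \emph{not} give you $\varepsilon$-regular pairs with density bounded below by a constant. It gives an equitable partition $(V_i)$ together with, for each $i$, an auxiliary partition $(V_{i,j})$ such that each $(V_i,V_{i,j})$ is $\varepsilon$-regular --- but those pairs may have density arbitrarily close to $0$. A $(k,r)$-graph with $r=n/(\log n)^c$ has only $\Theta(n^2/(\log n)^c)=o(n^2)$ edges, so you cannot expect any dense regular pair to appear; the ``dense case vs.\ localized dense spot'' dichotomy in your Step~1 does not hold.

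Second, even granting a dense regular pair, your Step~3 breaks for multiple reasons. A regular \emph{pair} $(A,B)$ is a bipartite structure and thus contains no $K_k$ for $k\ge 3$, so you cannot ``greedily locate'' disjoint $K_k$'s inside it. More importantly, the claim that ``the set $S$ of the $\binom{k}{2}$ edges of one copy is critical'' is false in general: removing the edges of one $K_k$ from a $k$-chromatic graph does not make it $(k-1)$-colorable. And the pigeonhole over $t$ disjoint copies has no target: a $(k,r)$-graph does not come equipped with a distinguished set of $r$ ``forbidden'' edges; it is a property quantified over \emph{all} edge sets of size at most $r$. So finding a copy ``none of whose edges was among the $r$ forbidden edges'' is not a meaningful statement, and no contradiction follows. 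You flagged Step~3 as the main obstacle yourself; it is in fact where the argument does not go through.

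The paper's proof instead proceeds by an averaging argument and does not embed anything. It takes the largest part $V_1$ of the equitable partition, and for each $v\in V_1$ fixes a $(k-1)$-coloring $c_v$ of $G-v$ that (after color permutation) puts at least $\frac{|V_1|-1}{k-1}$ vertices of $V_1$ into color~$1$. For a uniformly random $v\in V_1$, it estimates $\mathbb E[X]$ where $X$ is the number of neighbors of $v$ colored $1$ under $c_v$, decomposing along the auxiliary partition $(V_{1,j})$. When $d(V_1,V_{1,j})\le\varepsilon$ the contribution is trivially $\le\varepsilon|V_{1,j}|$; when $d(V_1,V_{1,j})>\varepsilon$, one uses $\varepsilon$-regularity together with the fact that $c_v^{-1}(\{1\})$ is an \emph{independent set} (hence has density $0$ to itself) to deduce $|c_v^{-1}(\{1\})\cap V_{1,j}|<\varepsilon|V_{1,j}|$. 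Summing gives $\mathbb E[X]\le\varepsilon n=n/(\log n)^c$, so some specific $v$ admits a critical edge set (the $\le n/(\log n)^c$ edges from $v$ to its color-$1$ neighbors) of the claimed size. The crucial step you are missing is this use of regularity against an \emph{independent set} to force a color class to be small in each part where the density is nontrivial; it entirely avoids the need for embedding any dense gadget.
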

Beyond attacking the remaining open case $k=4$ of Dirac's and Erd\H{o}s's problems, it would also be interesting to further improve our bounds provided in Theorems~\ref{thm:lowerbound} and~\ref{thm:upperbound} and determine the asymptotic growth of $f_k(n)$ in terms of $n$ precisely.

\medskip

\paragraph*{\textbf{Notation.}} By $V(G)$, $E(G)$ we denote the vertex- and edge-set of a graph $G$, respectively. For a vertex $v\in V(G)$, we denote by $N_G(v)$ the set of neighbors of $v$ in $G$.  
Given a subset of vertices $X\subseteq V(G)$, we denote by $G[X]$ the induced subgraph of $G$ with vertex-set $X$ and denote $G-X:=G[V(G)\setminus X]$. We abbreviate $G-v:=G-\{v\}$. Similarly, for an edge-subset $R\subseteq E(G)$ we denote by $G-R$ the spanning subgraph obtained by omitting the edges in $R$. A \emph{proper} coloring of a graph $G$ is a mapping $c:V(G)\rightarrow S$ where $S$ is some finite set of colors, such that $c(u)\neq c(v)$ for all $uv\in E(G)$. We denote by $\log(\cdot)$ and $\exp(\cdot)$ the \emph{natural} logarithm and the exponential function, respectively. With a slight abuse of notation, given two integers $a,b\in \mathbb{Z}$ with $a\le b$ we use $[a,b]:=\{a,\ldots,b\}$ to denote the set of integers from $a$ to $b$. For a natural number $N$ and any two elements $a, b \in [0,N-1]$, we denote by $d_N(a, b) := \min\{|b-a|, N - |b-a|\}$ the \emph{cyclic distance} of $a$ and $b$. Let $A, B \subseteq \mathbb Z$ and $c \in \mathbb Z$. We will use the following notation for sums of sets and elements:
    $A + c=c+A := \{a + c \mid a \in A\}$,
    $cA := \{ca \mid a \in A\}$,
    $A + B := \{a + b \mid a \in A, b\in B\}$.

\medskip

\paragraph*{\textbf{Organization.}} The main technical work for the proof of our main result, namely Theorem~\ref{thm:lowerbound}, lies in establishing Theorem~\ref{main-thm-short}. Since the proof of the latter is detail-heavy and spans several pages, we decided to start off in Section~\ref{sec:deduct} by first presenting the relatively compact proof of Theorem~\ref{thm:lowerbound}, assuming Theorem~\ref{main-thm-short}. In Section~\ref{sec:upper} we then move on by giving the self-contained and also relatively short proof of our sublinear upper bound on $f_k(n)$ in Theorem~\ref{thm:upperbound}. Somewhat unusually for this area of graph coloring, our argument here relies on a variant of Szemer\'{e}di's regularity lemma. Finally, in Section~\ref{sec:main} we provide the proof of our main technical result, Theorem~\ref{main-thm-short}. Our proof considers a $3$-parameter family of circulant graphs $G_{k,m,q}$ which form a modification of a construction previously used by Jensen~\cite{jensen}. Our goal is to show that these graphs (with appropriate choice of the parameters) satisfy the properties required by Theorem~\ref{main-thm-short}. The argument for this needs two steps: \begin{enumerate} \item[(1)] A proof of the $(k-1)$-colorability of $G_{k,m,q}-v$ for all vertices $v$ (captured in Lemma~\ref{colorability-lemma}). This is a relatively simple (yet technical) exercise which is deferred to an appendix (Appendix~\ref{sec:vertcrit}) to not obstruct the flow of reading. \item[(2)] The main work is to prove the non-existence of small critical sets of edges in $G_{k,m,q}$. This proof requires an intricate analysis of the structure of proper $(k-1)$-colorings of graphs of the form $G_{k,m,q}-R$ where $R$ is some bounded-size edge subset, with the goal of eventually leading the assumption of their existence to a contradiction. This itself requires several intermediate steps that work towards establishing a certain periodicity (Lemma~\ref{partial-periodicity-lemma}) of the proper colorings. The latter then easily implies the desired contradiction. 

We remark that the proof of one of the key claims in the proof (namely, Claim~\ref{equiv-structure}) turns out to be significantly more involved in the cases $k\in \{6,8\}$, while a simpler and much shorter argument suffices for all other cases, when $k\notin\{6,8\}$. We thus decided to present this simpler argument in the main proof and to delay the more technical argument for the cases $k\in \{6,8\}$ to an appendix (Appendix~\ref{sec:68}) to not hinder the flow of reading for the main proof.
\end{enumerate}
Finally, we conclude the paper in Section~\ref{sec:conc} by briefly discussing the remaining open case $k=4$ of the problem.

\section{Proof of Theorem~\ref{thm:lowerbound}, assuming Theorem~\ref{main-thm-short}}\label{sec:deduct}
In this section, we show how Theorem~\ref{thm:lowerbound} can be deduced from Theorem~\ref{main-thm-short}. The main missing part of Theorem~\ref{thm:lowerbound} is that we need to find $(k,r)$-graphs for large values of $r$ and any given sufficiently large order $n$, while Theorem~\ref{main-thm-short} only creates such graphs whose order satisfies a rather specific congruence constraint. To bridge this gap, we prove the following lemma, which can be used to create new $(k,r)$-graphs from a given set of $(k,r)$-graphs, thus allowing us to manipulate the order of such graphs. The proof idea is partly inspired by previous work of Wang~\cite{wang}.

\begin{lemma}\label{lem:glue}
Let $k\ge 4$, $r\ge 1$ and $n_1,\ldots,n_t\ge 1, h\ge 1$ be integers such that the following hold:
\begin{itemize}
    \item For every $i\in [1,t]$ there exists a $(k,r)$-graph of order $n_i$, and
    \item there exists a $k$-critical graph of order $h$ and with $t$ edges.
\end{itemize}
Then there also exists a $(k,r)$-graph of order
$$h+\sum_{i=1}^{t}(n_i-1).$$
\end{lemma}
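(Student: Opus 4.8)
The plan is to realise the desired graph by a \emph{gluing operation} that replaces each edge of the given $k$-critical graph $H$ by a suitably attached copy of one of the $(k,r)$-graphs $G_i$ with one vertex deleted. Write $E(H)=\{e_1,\dots,e_t\}$ with $e_i=x_iy_i$, and for $i\in[1,t]$ let $G_i$ be a $(k,r)$-graph on $n_i$ vertices. The basic building block is the standard observation that, since $G_i$ is $k$-vertex-critical, for any $w_i\in V(G_i)$ the graph $G_i-w_i$ is $(k-1)$-colourable and in \emph{every} proper $(k-1)$-colouring of it the neighbourhood $N_{G_i}(w_i)$ receives all $k-1$ colours; moreover -- and this is where the hypothesis that $G_i$ is a $(k,r)$-graph, rather than merely $k$-vertex-critical, is used -- the same ``rainbow'' conclusion survives the deletion of any set of at most $r$ edges from $G_i-w_i$, since a violating $(k-1)$-colouring would extend (by recolouring $w_i$) to a proper $(k-1)$-colouring of $G_i$ minus at most $r$ edges, contradicting that $G_i$ is a $(k,r)$-graph.

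Using this, for a carefully chosen vertex $w_i$ and neighbour $z_i\in N_{G_i}(w_i)$ I would build a gadget $\Gamma_i$ with two terminals $x_i,y_i$ out of a copy of $G_i-w_i$ together with one additional vertex $p_i$, so that $\Gamma_i$ contributes exactly $n_i-1$ vertices beyond its terminals: roughly, identify $z_i$ with $x_i$, join $p_i$ to $N_{G_i}(w_i)\setminus\{z_i\}$, and attach $p_i$ to $y_i$ (with enough redundancy to make the terminal attachment robust -- see below). The graph $\Gamma$ is then obtained from $H$ by deleting every edge $e_i$ and inserting $\Gamma_i$ at $\{x_i,y_i\}$; its order is $h+\sum_{i=1}^{t}(n_i-1)$, as required. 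The verification then splits into three parts. \textbf{(1)} $\chi(\Gamma)=k$: in any proper $(k-1)$-colouring of $\Gamma$, the robust rainbow property inside $\Gamma_i$ forces $z_i$ to be the unique vertex of $N_{G_i}(w_i)$ of its colour and $c(p_i)=c(z_i)=c(x_i)$, whence $c(x_i)\ne c(y_i)$; so such a colouring restricts to a proper $(k-1)$-colouring of $H$, contradicting $\chi(H)=k$, and $\chi(\Gamma)\ge k$. Conversely, a proper $k$-colouring of $H$ extends over each gadget provided $G_i-w_i$ admits a proper $(k-1)$-colouring in which $z_i$ is the only vertex of $N_{G_i}(w_i)$ receiving its colour; the existence of such a colouring, for a suitable choice of $w_i$ and $z_i$, is a structural fact about $k$-vertex-critical graphs which I would isolate as a separate lemma. \textbf{(2)} $k$-vertex-criticality of $\Gamma$: for $v\in V(H)$ use that $H-v$ is $(k-1)$-colourable together with the flexibility of the gadgets (each realises any prescribed pair of distinct terminal colours, and a gadget missing a terminal is $(k-1)$-colourable with the remaining terminal arbitrarily prescribed); for $v$ an interior vertex of $\Gamma_i$ use that $G_i-v$ is $(k-1)$-colourable, so $\Gamma_i-v$ no longer forces $c(x_i)\ne c(y_i)$, combined with the fact that $H-e_i$ is $(k-1)$-colourable -- here $H$ being $k$-edge-critical enters. \textbf{(3)} No set $R\subseteq E(\Gamma)$ with $|R|\le r$ is critical: partition $R$ as $R=R_1\cup\dots\cup R_t$ according to which gadget each edge belongs to, so each $|R_i|\le r$, and argue that $\Gamma_i-R_i$ still forces $c(x_i)\ne c(y_i)$ -- using the robust rainbow property together with the redundancy built into the terminal attachment -- so a proper $(k-1)$-colouring of $\Gamma-R$ would again restrict to one of $H$, a contradiction.

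The main obstacle is reconciling steps \textbf{(1)} and \textbf{(3)}: the gadget must be simultaneously \emph{robust} (the forcing $c(x_i)\ne c(y_i)$ must survive the deletion of up to $r$ edges anywhere in $\Gamma_i$, including the few edges attaching $x_i$ and $y_i$) and \emph{fragile in the right way} (the forcing must break when any single interior vertex is removed, and $\Gamma_i$ must itself be $(k-1)$-colourable with free terminals, so that no single gadget already needs $k$ colours). Achieving this while spending only one extra vertex per gadget and without raising the chromatic number is the delicate point, and it is where the precise choice of $w_i$, of $z_i$, and of the attachment pattern of $x_i,y_i,p_i$ -- together with the auxiliary structural lemma on $k$-vertex-critical graphs mentioned in step \textbf{(1)} -- must be pinned down; everything else is routine bookkeeping about which constraints of $H$ survive under vertex- and edge-deletions.
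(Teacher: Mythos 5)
Your approach is genuinely different from the paper's, and it contains concrete gaps; the key one is a fatal lack of robustness in your gadget.

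The paper introduces no auxiliary vertex $p_i$ at all. Writing $e_i = u_i w_i$ for the edges of $H$, it fixes a proper $(k-1)$-coloring $c_i$ of $G_i - v_i$ for an arbitrary $v_i \in V(G_i)$, partitions $N_{G_i}(v_i)$ into the non-empty sets $A_i := c_i^{-1}(\{1\}) \cap N_{G_i}(v_i)$ and $B_i := N_{G_i}(v_i) \setminus A_i$, and builds $G$ by taking the disjoint union of $H, G_1-v_1, \ldots, G_t-v_t$, deleting all edges of $H$, and adding all edges from $u_i$ to $A_i$ and from $w_i$ to $B_i$. Since $A_i \cup B_i = N_{G_i}(v_i)$, any proper $(k-1)$-coloring $c$ of $G-R$ with $|R| \le r$ has some $i$ with $c(u_i) = c(w_i)$ (because $\chi(H)=k$), and setting $\phi(v_i) := c(u_i)$ then transfers $c$ to a proper $(k-1)$-coloring of $G_i - R_i$ with $|R_i| \le r$, contradicting that $G_i$ is a $(k,r)$-graph. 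No structural lemma about a uniquely-colored neighbor is needed anywhere, and no ``robust rainbow'' mechanism is invoked.

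Your gadget has two genuine gaps. First, it rests on an unproven auxiliary claim that for some $w_i \in V(G_i)$ and $z_i \in N_{G_i}(w_i)$ there is a proper $(k-1)$-coloring of $G_i - w_i$ in which $z_i$ is the \emph{unique} member of $N_{G_i}(w_i)$ in its color class; you flag it as ``a separate lemma'' but never establish it, and it is not a routine fact. Second, and decisively, your terminal attachment at $y_i$ consists of the single edge $p_i y_i$, and $R := \{p_iy_i\}$ already defeats the forcing: since $H$ is $k$-edge-critical, $H - e_i$ has a proper $(k-1)$-coloring $c_H$, which necessarily satisfies $c_H(x_i) = c_H(y_i)$; you can extend $c_H$ across every gadget $j \ne i$ (respecting $c_H(x_j) \ne c_H(y_j)$ via your lemma), and across gadget $i$ with $p_iy_i$ removed the interior no longer sees $y_i$, so the same $(k-1)$-coloring of $G_i-w_i$ with $p_i$ colored $c_H(x_i)$ works. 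The result is a proper $(k-1)$-coloring of $\Gamma - R$ with $|R|=1$, so $\Gamma$ would not even be a $(k,1)$-graph. The ``redundancy'' you allude to is never specified, and with exactly one extra vertex and one attachment edge to $y_i$ there is simply no room for it. The paper's construction avoids this entirely: attaching $u_i$ and $w_i$ to the two parts of the whole neighborhood $N_{G_i}(v_i)$ spreads the attachment over many edges, and even after removing up to $r$ of them the transferred edge set $R_i$ inside $G_i$ still has size at most $r$, which is precisely what the $(k,r)$-hypothesis on $G_i$ is designed to tolerate.
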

\begin{proof}
Let $G_1,\ldots,G_t$ be $(k,r)$-graphs with $|V(G_i)|=n_i$ for all $i\in [1,t]$ and let $H$ be a $k$-critical graph with $|V(H)|=h$ and $|E(H)|=t$. For each $i\in [1,t]$, let us pick (arbitrarily) some vertex $v_i\in V(G_i)$. Since $G_i$ is a $(k,r)$-graph, there exists a proper $(k-1)$-coloring $c_i:V(G_i)\setminus \{v_i\}\rightarrow [1,k-1]$ of the graph $G_i-v_i$ for every $i\in [1,t]$. For each $i\in [1,t]$ let $A_i, B_i$ be a partition of the neighborhood $N_{G_i}(v_i)$ of $v_i$ in $G_i$ defined as follows: $A_i:=c_i^{-1}(\{1\})\cap N_{G_i}(v_i)$ and $B_i:=c_i^{-1}([2,k-1])\cap N_{G_i}(v_i)$. 

Let us fix (arbitrarily) some enumeration $e_1=u_1w_1,\ldots,e_t=u_tw_t$ of the edges of $H$. We now create a new graph $G$ as follows: We start from the disjoint union of the graphs $H, G_1-v_1,\ldots, G_t-v_t$. We then remove all edges of $H$ and add the following new edges: For every $i\in [1,t]$, we add all possible edges from $u_i$ to $A_i\subseteq V(G_i)$ and from $w_i$ to $B_i\subseteq V(G_i)$. 

We now claim that $G$ is a $(k,r)$-graph of order $h+\sum_{i=1}^{t-1}(n_i-1)$. It is easily checked by construction that the number of vertices of $G$ indeed coincides with the above number, so it remains to show that $G$ is $k$-vertex-critical and that no set of at most $r$ edges of $G$ is critical. 

First, let us show that no set of at most $r$ edges of $G$ is critical. So let $R\subseteq E(G)$ be arbitrary such that $|R|\le r$ and let us show that $\chi(G-R)\ge k$. Towards a contradiction, suppose that there exists a proper $(k-1)$-coloring $c:V(G)\rightarrow [1,k-1]$ of $G-R$. Since $\chi(H)=k$, the restriction $c|_{V(H)}$ cannot form a proper coloring of $H$, and thus there exists some $i\in [1,t]$ such that $c(u_i)=c(w_i)$. Let us now consider the coloring $\phi:V(G_i)\rightarrow [1,k-1]$ of $G_i$, defined as $\phi(x):=c(x)$ for $x\in V(G_i)\setminus \{v_i\}$ and $\phi(v_i):=c(u_i)=c(w_i)$. Since $A_i\cup B_i=N_{G_i}(v_i)$ and $c$ is a proper coloring of $G-R$, one now easily checks that $\phi$ is a proper $(k-1)$-coloring of $G_i-R_i$, where $R_i:=(E(G_i-v)\cap R)\cup \{v_ix|u_ix\in R \text{ or }w_ix\in R\}$. Since clearly $|R_i|\le |R|\le r$, this contradicts our assumption that $G_i$ is a $(k,r)$-graph. This concludes the argument, showing that $G$ indeed does not have any critical edge sets of size at most $r$. 

It remains to show that $G-v$ is $(k-1)$-colorable for every $v\in V(G)$ (which then also ensures $\chi(G)=k$, so $G$ is $k$-vertex-critical). Before beginning the argument, it will turn out useful to make a simple but important observation: For every $i\in [1,t]$ and every two distinct colors $a,b\in [1,k-1]$, there exists a proper $(k-1)$-coloring $\phi_i^{a,b}:(V(G_i)\setminus \{v_i\})\cup \{u_i,w_i\}\rightarrow [1,k-1]$ of the induced subgraph $G_i':=G[(V(G_i)\setminus \{v_i\})\cup \{u_i,w_i\}]$ such that $\phi_i^{a,b}(u_i)=a, \phi_i^{a,b}(w_i)=b$. Indeed, in the case $a=2$ and $b=1$ this directly follows by extending the proper coloring $c_i$ of $G_i-v_i$ with colors $2$ and $1$ at $u_i$ and $w_i$. Since $N_{G_i'}(u_i)=A_i\subseteq c_i^{-1}(\{1\})$ and $N_{G_i'}(w_i)=B_i\subseteq c_i^{-1}([2,k])$, this extension indeed forms a proper coloring of $G_i'$. After appropriate permutation of the colors mapping $2$ and $1$ to $a$ and $b$, respectively, we obtain the desired statement claimed above.

Let us now consider any $v\in V(G)$ and let us prove that $G-v$ is $(k-1)$-colorable. We distinguish two cases depending on the location of $v$:

Suppose first that $v\in V(H)$. Since $H$ is a $k$-critical graph, $H-v$ admits a proper $(k-1)$-coloring $c_H:V(H)\setminus \{v\}\rightarrow [1,k-1]$. We can extend this to a proper coloring of all of $G-v$ as follows: For every $i\in [1,t]$, if $e_i$ is not incident with $v$, then color all vertices in $V(G_i)\setminus \{v_i\}$ as in the proper coloring $\phi_i^{c_H(u_i),c_H(w_i)}$ of $G_i'$. If $e_i$ is incident with $v$, w.l.o.g. say $w_i=v$, then we select some color $s\in [1,k-1]\setminus c_H(u_i)$ and then color all vertices in $V(G_i)\setminus \{v_i\}$ as in the proper coloring $\phi_i^{c_H(u_i),s}$ of $G_i'$. One easily checks that this extension of $c_H$ to $V(G)\setminus \{v\}$ forms a proper $(k-1)$-coloring of $G-v$, as desired.

Next, consider the case that $v\notin V(H)$, i.e. $v\in V(G_j)\setminus \{v_j\}$ for some $j\in [1,t]$. Since $H$ is $k$-critical, there exists a proper $(k-1)$-coloring $c_H:V(H)\rightarrow [1,k-1]$ of the graph $H-e_j$. Since $H$ is $k$-chromatic, we furthermore know that $c_H(u_j)=c_H(w_j)$. Let $\phi_j:V(G_j)\setminus \{v\}\rightarrow [1,k-1]$ be a proper $(k-1)$-coloring of $G_j-v$. Possibly after permuting colors, we may w.l.o.g. assume that $\phi_j(v_j)=c_H(u_j)=c_H(w_j)$. We can now extend $c_H$ to a proper coloring of $G-v$ as follows: We color all vertices in $V(G_j)\setminus \{v_j,v\}$ as in the proper coloring $\phi_j$ of $G_j-v$. For all $i\in [1,k-1]\setminus \{j\}$, we color all vertices in $V(G_i)\setminus \{v_i\}$ as in the proper coloring $\phi_i^{c_H(u_i),c_H(w_i)}$ of $G_i'$. Again, it is not hard to check by the properties of these partial colorings that this extension of $c_H$ to $V(G)\setminus \{v\}$ forms a proper $(k-1)$-coloring of $G-v$, as desired. This concludes the proof that $G$ is $k$-vertex-critical. Together with what we have shown previously this implies that $G$ is a $(k,r)$-graph of the desired order, concluding the proof of the lemma.  
\end{proof}

The lemma above allows to create new $(k,r)$-graphs with certain orders. To be able to create such graphs of order equal to any sufficiently large integer, we will combine Theorem~\ref{main-thm-short} with the following simple statement about the existence of sparse $k$-critical graphs of any given (sufficiently large) order. 

\begin{lemma}\label{lem:critorder}
Let $k\ge 4$. For every integer $n\ge k+3$, there exists a $k$-critical graph of order $n$ with at most $(k-2)n$ edges. 
\end{lemma}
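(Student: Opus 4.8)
The plan is to construct $G$ by joining a clique to a small $4$-critical graph, relying on two classical facts about critical graphs (see e.g.~\cite{jensentoft,stiebitz}): (a) if $A$ is $a$-critical and $B$ is $b$-critical, then the join $A\vee B$ is $(a+b)$-critical, with $|V(A\vee B)|=|V(A)|+|V(B)|$ and $|E(A\vee B)|=|E(A)|+|E(B)|+|V(A)|\cdot|V(B)|$; and (b) the \emph{Haj\'os join} of two $k$-critical graphs $G_1,G_2$ (delete an edge $x_iy_i$ from each $G_i$, identify $x_1$ with $x_2$, and add the edge $y_1y_2$) is again $k$-critical, of order $|V(G_1)|+|V(G_2)|-1$ and with $|E(G_1)|+|E(G_2)|-1$ edges. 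Granting these, given $k\ge 4$ and $n\ge k+3$, I would set $m:=n-k+4\ge 7$, let $F$ be a $4$-critical graph of order $m$ with at most $2m$ edges (produced below), and output $G:=K_{k-4}\vee F$, where for $k=4$ we read $K_0$ as the empty graph so that $G=F$. Then $G$ is $k$-critical of order $(k-4)+m=n$: for $k=4$ this is just the choice of $F$, and for $k\ge 5$ it follows from (a) since $K_{k-4}$ is $(k-4)$-critical. Moreover $|E(G)|=\binom{k-4}{2}+(k-4)m+|E(F)|\le \binom{k-4}{2}+(k-2)m=\binom{k-4}{2}+(k-2)(n-k+4)$, which is at most $(k-2)n$ because $\binom{k-4}{2}\le (k-2)(k-4)$ for every $k\ge 4$ (a one-line inequality).

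It thus remains to prove the auxiliary claim that for every integer $m\ge 7$ there is a $4$-critical graph of order $m$ with at most $2m$ edges, and here I would split by parity. For even $m\ge 4$, the wheel $W_{m-1}:=K_1\vee C_{m-1}$ is $4$-critical by (a) (as $K_1$ is $1$-critical and the odd cycle $C_{m-1}$ is $3$-critical), has order $m$, and has $2(m-1)\le 2m$ edges; this handles all even $m\ge 8$. For odd $m\ge 7$, I would take $F$ to be the Haj\'os join of $K_4$ with the $4$-critical graph of even order $m-3\ge 4$ just constructed (this is simply $K_4$ itself when $m=7$); by (b), $F$ is $4$-critical, has order $4+(m-3)-1=m$, and has $6+2(m-3)-1=2m-1\le 2m$ edges. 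Together with $K_4$ (which is $4$-critical of order $4$ with $6$ edges), this produces $4$-critical graphs of order $m$ with at most $2m$ edges for every $m\ge 4$ with $m\ne 5$, in particular for every $m\ge 7$.

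I do not expect a serious obstacle. The main point needing care is the edge bookkeeping: matching the target $(k-2)n$ forces the intermediate $4$-critical graphs to have at most $2|V|$ edges, which the wheel and Haj\'os-join constructions meet with room to spare. The other thing to be careful about is invoking (a) and (b) correctly — in particular that the Haj\'os join of $k$-critical graphs is $k$-\emph{edge}-critical, not merely vertex-critical. If one prefers a self-contained argument to citing~\cite{jensentoft,stiebitz}, both facts have short proofs: for (a) the only mildly delicate case is an edge $uv$ running between the two parts, which one handles by coloring $A$ so that $u$ receives a color used nowhere else in $A$ and coloring $B$ (with an otherwise disjoint palette) so that $v$ receives that same color; for (b), when deleting an edge $e$ of $G_1$, one should start from a proper $(k-1)$-coloring of $G_1-e$ — which automatically colors the endpoints of the deleted edge $x_1y_1$ differently — and then extend it across the identified vertex into $G_2$, recoloring nothing.
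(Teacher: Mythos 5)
Your proposal is correct and matches the paper's proof essentially step for step: wheels for even order, a Haj\'os join of a wheel with $K_4$ for odd order $\ge 7$, and then lifting to general $k$ by joining $K_{k-4}$ (which is the same graph the paper gets by adding $k-4$ universal vertices one at a time). One tiny arithmetic slip worth fixing: the $4$-critical wheel of even order $m-3$ has $2\bigl((m-3)-1\bigr)=2(m-4)$ edges, not $2(m-3)$, so the Haj\'os join has $6+2(m-4)-1=2m-3$ edges — still $\le 2m$, so the conclusion is unaffected.
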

\begin{proof}
We start by proving the special case $k=4$, and later observe that the general case reduces to this case. 

So let $n\ge 7$ be any integer. If $n$ is even, then the wheel graph of order $n$ (that is, the graph obtained from the odd cycle of length $n-1$ by adding a new vertex connected to all vertices on the cycle) is easily checked to be a $4$-critical graph and has $2(n-1)<(4-2)n$ edges, as desired. In fact, the assumption $n\ge 7$ is not required in this case and $n\ge 4$ is sufficient. 

Next, suppose that $n\ge 7$ is odd. We now consider two graphs $G_1$ and $G_2$: The graph $G_1$ is a wheel of even order $n-3\ge 4$, which, as discussed above, is a $4$-critical graph. The graph $G_2$ is simply a $K_4$, which is clearly also $4$-critical. To obtain the desired $4$-critical graph of order $n$, we now perform a so-called \emph{Haj\'{o}s join} of the graphs $G_1$ and $G_2$ (first introduced by Haj\'{o}s~\cite{hajos}): We select an edge $e_1=u_1v_1$ in $G_1$ and an edge $e_2=u_2v_2$ in~$G_2$. We then start from the disjoint union of $G_1$ and $G_2$, delete the edges $e_1$ and $e_2$ from $G_1$ and $G_2$, respectively, next identify the vertices $u_1$ and $u_2$ and finally add a new edge between the vertices $v_1$ and $v_2$. It is well-known and easy to show (cf. Theorem 4.2(c) of~\cite{stiebitz}) that the Haj\'{o}s join of two $k$-critical graphs is always $k$-critical, so the above operation indeed results in a $4$-critical graph of order $|V(G_1)|+|V(G_2)|-1=(n-3)+4-1=n$ and with $|E(G_1)|+|E(G_2)|-1=2(n-4)+6-1<(4-2)n$ edges, as desired. This concludes the proof in the case $k=4$. 

Now consider any integers $k\ge 5$ and $n\ge k+3$. Then $n-(k-4)\ge 7$, and thus by what we just argued there exists a $4$-critical graph $G$ of order $n-(k-4)$ with at most $2(n-(k-4))$ edges. Now consider a graph obtained from $G$ by repeating the following operation $k-4$ times: Add a new universal vertex, i.e. a new vertex made adjacent to all previous vertices. Since performing one step of this operation increases the chromatic number and order of the graph by one, preserves criticality and adds less than $n$ new edges, the eventually obtained graph has order $(n-(k-4))+(k-4)=n$, chromatic number $4+(k-4)=k$, is $k$-critical and has less than $2(n-(k-4))+(k-4)n<(k-2)n$ edges, as desired. This concludes the proof of the lemma in the general case.
\end{proof}

Combining the previous two lemmas and assuming Theorem~\ref{main-thm-short}, we can obtain the following statement.

\begin{theorem}\label{thm:precise}
Let $k\ge 5$ and $r\ge 1$ be integers. Then for every integer $$n\ge 8(k-1)(18r+3)(16(k-2)(k-1)(18r+3)(6r+3)+2)=\Theta(k^3r^3)$$ there exists a $(k,r)$-graph of order $n$. 
\end{theorem}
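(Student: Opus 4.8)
The plan is to combine Theorem~\ref{main-thm-short} (which supplies $(k,r)$-graphs of orders in a fixed residue class mod $M := 8(k-1)(18r+3)$) with the gluing operation of Lemma~\ref{lem:glue}, using sparse $k$-critical graphs from Lemma~\ref{lem:critorder} as the "skeleton" $H$ along which we glue. The key point is that if we take $H$ to be a $k$-critical graph with $t$ edges, and if we have $(k,r)$-graphs of orders $n_1,\dots,n_t$, then Lemma~\ref{lem:glue} produces a $(k,r)$-graph of order $h + \sum_{i=1}^t (n_i-1)$. By choosing all $n_i$ from the arithmetic progression $\{n_0, n_0+M, n_0+2M, \dots\}$ guaranteed by Theorem~\ref{main-thm-short} (where $n_0 := M \cdot \lceil (6r+3)\rceil$-ish is the smallest admissible order, concretely $n_0 = M(6r+3)+1$ works since it satisfies both the size bound and the congruence), each summand $n_i - 1$ is a multiple of $M$ plus the fixed quantity $n_0 - 1$, so the order produced is $h + t(n_0-1) + M\cdot(\text{any nonnegative integer we like})$. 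Thus, once we fix $H$, we can realize every order in a full residue class mod $M$ above some threshold — provided we can also adjust $t$ and $h$ to sweep out all residues mod $M$.

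First I would make the dependence on $t$ explicit: since $n_0 - 1 = M(6r+3)$ is itself a multiple of $M$, the term $t(n_0-1)$ is always $\equiv 0 \pmod M$, so varying $t$ alone does not change the residue of the output order mod $M$; the residue is controlled entirely by $h = |V(H)|$. Hence I would use Lemma~\ref{lem:critorder} to pick, for each target residue $\rho \in [0, M-1]$, a $k$-critical graph $H_\rho$ whose order $h_\rho$ lies in the range $[k+3, k+2+M]$ and satisfies $h_\rho \equiv \rho \pmod M$ (such an $h_\rho$ exists since this interval has length $M$); let $t_\rho := |E(H_\rho)| \le (k-2)h_\rho \le (k-2)(k+2+M)$ be its number of edges. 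Now, given a large target $n$, set $\rho := n \bmod M$, take $H := H_\rho$, and we need to choose the multiplicities: we must write $n = h_\rho + \sum_{i=1}^{t_\rho}(n_i - 1)$ with each $n_i \in \{n_0, n_0+M, \dots\}$. Writing $n_i - 1 = n_0 - 1 + M a_i$ with $a_i \ge 0$, this becomes $n - h_\rho - t_\rho(n_0-1) = M \sum a_i$; the left side is divisible by $M$ by our choice of $\rho$ (using $n_0 - 1 \equiv 0$ and $h_\rho \equiv n \pmod M$), and it is nonnegative as soon as $n \ge h_\rho + t_\rho(n_0-1)$, so we can distribute the quotient $\sum a_i$ arbitrarily among the $t_\rho \ge 1$ coordinates. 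This realizes $n$ as a valid glued order, so a $(k,r)$-graph of order $n$ exists.

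The remaining step is the bookkeeping to confirm the stated threshold $8(k-1)(18r+3)(16(k-2)(k-1)(18r+3)(6r+3)+2)$ is at least $\max_\rho\big(h_\rho + t_\rho(n_0-1)\big)$. Plugging in $h_\rho \le k+2+M$, $t_\rho \le (k-2)(k+2+M)$, $n_0 - 1 = M(6r+3)$, and $M = 8(k-1)(18r+3)$, the dominant term is $t_\rho(n_0-1) \le (k-2)(k+2+M)\cdot M(6r+3)$, and one checks $k+2+M \le 2M$ (since $M \ge 8\cdot 4\cdot 21 \ge k+2$ for the relevant range, or more carefully $M \ge 8(k-1) \ge k+2$ for $k \ge 3$), giving $t_\rho(n_0-1) \le 2(k-2)M^2(6r+3) = M\cdot 2(k-2)M(6r+3) = M\cdot 16(k-2)(k-1)(18r+3)(6r+3)$, and adding $h_\rho \le 2M$ yields $h_\rho + t_\rho(n_0-1) \le M(16(k-2)(k-1)(18r+3)(6r+3)+2)$, which is exactly the claimed bound. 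The $\Theta(k^3 r^3)$ estimate is immediate from $M = \Theta(kr)$, $k-2 = \Theta(k)$, $(18r+3)(6r+3) = \Theta(r^2)$. I do not expect any genuine obstacle here — the only mild subtlety is making sure that varying $t$ does not help and that one must instead absorb the residue through $h$, which is why selecting $n_0$ so that $n_0 - 1$ is a clean multiple of $M$ is the crucial (and, from Theorem~\ref{main-thm-short}'s congruence $n \equiv 1 \pmod M$, automatic) observation.
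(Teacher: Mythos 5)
Your proposal is correct and takes essentially the same route as the paper: choose $h$ in an interval of length $M=8(k-1)(18r+3)$ matching the residue of $n$ modulo $M$, build the $k$-critical skeleton $H$ via Lemma~\ref{lem:critorder}, take the glued pieces of orders $n_i\equiv 1\pmod M$ from Theorem~\ref{main-thm-short}, and apply Lemma~\ref{lem:glue}. The only cosmetic difference is that the paper places $h$ in $[M,2M)$ via Euclidean division rather than $[k+3,k+2+M]$, but the bookkeeping is the same and your threshold verification matches the stated bound.
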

\begin{proof}[Proof, assuming Theorem~\ref{main-thm-short}]
    Let $n\ge 8(k-1)(18r+3)(16(k-2)(k-1)(18r+3)(6r+3)+2)$ be given arbitrarily. We perform a Euclidean division of $n$ by $8(k-1)(18r+3)$, allowing us to write $n$ as 
    $$n=8(k-1)(18r+3)x+h,$$ where $x$ is a non-negative integer and $8(k-1)(18r+3)\le h<16(k-1)(18r+3)$ (here we used that $n\ge 8(k-1)(18r+3)$).

    Since $h\ge k+3$, Lemma~\ref{lem:critorder} implies that there exists a $k$-critical graph $H$ of order $h$ and with at most $(k-2)h$ edges. In the following, let $t$ denote the number of edges of $H$. Our assumed lower bound on $n$ guarantees that $$x>\frac{n}{8(k-1)(18r+3)}-2\ge 16(k-2)(k-1)(18r+3)(6r+3)> (6r+3)(k-2)h\ge (6r+3)t.$$

    This implies that there exist integers $x_1,\ldots,x_t$ such that $x=x_1+\cdots+x_t$ and $x_i\ge 6r+3$ for every $i\in [1,t]$. Set $n_i:=8(k-1)(18r+3)x_i+1$ for every $i\in [1,t]$. Then by Theorem~\ref{main-thm-short}, for every $i\in [1,t]$ there exists a $(k,r)$-graph $G_i$ of order $n_i$. We are now in the position to apply Lemma~\ref{lem:glue} to the numbers $n_1,\ldots,n_t$ and $h$. We thus obtain the existence of a $(k,r)$-graph of order
    $$h+\sum_{i=1}^{t}(n_i-1)=h+8(k-1)(18r+3)\sum_{i=1}^{t}x_i=n.$$ This concludes the proof of the theorem.
\end{proof}
Finally, observe that Theorem~\ref{thm:precise} directly implies the statement of Theorem~\ref{thm:lowerbound}: For every sufficiently large integer $n$, the largest number $r$ respecting the condition of Theorem~\ref{thm:precise} forms a lower bound for $f_k(n)$ and is easily seen to be of order $\Theta(n^{1/3})$. This establishes Theorem~\ref{thm:lowerbound} conditional on assuming Theorem~\ref{main-thm-short}, as desired.
\section{Proof of Theorem~\ref{thm:upperbound}}\label{sec:upper}
In this section, we provide the proof of the sublinear upper bounds on the functions $f_k$ stated in Theorem~\ref{thm:upperbound}. As the main ingredient of the proof we use the following variant of Szemer\'{e}di's regularity lemma due to Conlon and Fox, that compared to the classical regularity lemma guarantees only a weak form of regular partitions, but comes with better quantitative bounds. We are grateful to Yuval Wigderson for pointing us to this variant of the regularity lemma. Using the classical regularity lemma and with the same argument we present below one can also directly obtain a sublinear upper bound $f_k(n)=o(n)$, albeit with a weaker separation of $f_k(n)$ from $n$. 
To state the result by Conlon and Fox, we need to briefly recall the notion of \emph{regular pairs} that is central to all regularity lemmas. Recall that given a graph $G$ and two non-empty subsets $A, B\subseteq V(G)$ of vertices, the \emph{density} $d(A,B)$ is defined as the ratio 
$$\frac{|\{(a,b)\in A\times B|ab\in E(G)\}|}{|A||B|}.$$

\begin{definition}
    Let $G$ be a graph and $\varepsilon>0$ a constant. We say that a pair $(X,Y)$ of non-empty subsets of $V(G)$ is \emph{$\varepsilon$-regular} if there exists some $\alpha\in \mathbb{R}$ such that for all subsets $A\subseteq X, B\subseteq Y$ with $|A|\ge \varepsilon |X|, |B|\ge \varepsilon |Y|$ we have $\alpha<d(A,B)<\alpha+\varepsilon$.
\end{definition}
We can now state the aforementioned result due to Conlon and Fox.
\begin{lemma}[Conlon and Fox, Lemma~8.1 in~\cite{conlonfox}]\label{lem:conlonfox}
There exists an absolute constant $C>0$ such that the following holds for each $0 <\varepsilon< \frac{1}{2}$: For every graph $G$ there is an equitable partition\footnote{Recall that a partition is called equitable if any two sets of the partition differ in size by at most one.}
$V(G) = V_1 \cup \cdots \cup V_t$ of the vertex-set into less than $k(\varepsilon):=\exp(\varepsilon^{-C})$ parts such that for each $i\in [1,t]$ there is a
partition $V(G) = V_{i,1} \cup \cdots \cup V_{i,{j_i}}$ of the vertex-set with $j_i \le K(\varepsilon):=\frac{C}{\varepsilon}$, such that for all $i\in [1,t]$ and $j\in [1,j_i]$
the pair $(V_i, V_{i,j})$ is $\varepsilon$-regular in $G$.
\end{lemma}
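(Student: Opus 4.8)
The plan is an \textbf{asymmetric energy‑increment argument}, modelled on Szemer\'edi's original proof of the regularity lemma but with the ``index'' localised so that we only ever have to regularise each part of the main partition against a bounded auxiliary partition attached to that part. For $X\subseteq V(G)$ I write $g_X(v):=|N_G(v)\cap X|/|X|$ for the profile of $X$ seen from $v$, and I will use the identities $d(A,B)=\tfrac1{|B|}\sum_{b\in B}g_A(b)$ and $g_X=\sum_{X'}\tfrac{|X'|}{|X|}g_{X'}$ (the sum over any partition $\{X'\}$ of $X$) repeatedly.

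First I would fix the relevant objects. For a partition $\mathcal P=\{V_1,\dots,V_t\}$ set the energy $q(\mathcal P):=\sum_{i}\tfrac{|V_i|}{n}\,\mathbb E_{v\in V(G)}[g_{V_i}(v)^2]\in[0,1]$; by convexity $q$ is non‑decreasing under refinement, and splitting $V_i$ into $V_i'$ and $V_i\setminus V_i'$ raises $q$ by at least $\tfrac{|V_i'|}{n}\,\mathbb E_v[(g_{V_i'}(v)-g_{V_i}(v))^2]$. To each part $V_i$ I attach the auxiliary partition $\mathcal Q_i$ obtained by cutting $[0,1]$ into the two extreme windows $[0,\varepsilon^2/2)$, $(1-\varepsilon^2/2,1]$ and about $3/\varepsilon$ middle windows of length $\varepsilon/3$, and taking preimages under $g_{V_i}$; then $|\mathcal Q_i|\le K(\varepsilon)$, and each extreme class $Q$ is automatically $\varepsilon$‑regular against $V_i$, since $d(A,B)\le \varepsilon^{-1}d(V_i,B)$ (resp.\ $1-d(A,B)\le\varepsilon^{-1}(1-d(V_i,B))$) forces $d(A,B)$ into a window of length $<\varepsilon$. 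Call $\mathcal P$ \emph{settled} if $(V_i,Q)$ is $\varepsilon$‑regular for every $i$ and every $Q\in\mathcal Q_i$. A settled equitable partition with fewer than $k(\varepsilon)$ classes is exactly what the lemma asks for (with the $\mathcal Q_i$ as the auxiliary partitions), so the task reduces to producing one.

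The engine is the following increment step: \emph{if $\mathcal P$ is not settled, there is a refinement $\mathcal P'$ with $|\mathcal P'|\le 2^{O(1/\varepsilon)}|\mathcal P|$ and $q(\mathcal P')\ge q(\mathcal P)+\gamma(\varepsilon)$ for some $\gamma(\varepsilon)>0$.} To see the mechanism, suppose $(V_i,Q)$ fails to be $\varepsilon$‑regular for a \emph{middle} class $Q$. Taking $\alpha=d(V_i,Q)-\varepsilon/2$ in the definition yields $A\subseteq V_i$, $B\subseteq Q$ with $|A|\ge\varepsilon|V_i|$, $|B|\ge\varepsilon|Q|$ and $|d(A,B)-d(V_i,Q)|\ge\varepsilon/2$; since $g_{V_i}$ varies by $<\varepsilon/3$ on $Q$ this gives $|\mathbb E_{b\in B}[g_A(b)-g_{V_i}(b)]|=|d(A,B)-d(V_i,B)|>\varepsilon/6$, hence $\mathbb E_v[(g_A(v)-g_{V_i}(v))^2]\ge\tfrac{|B|}{n}\cdot\tfrac{\varepsilon^2}{36}$, so splitting $V_i$ by $A$ raises $q$ by at least $\tfrac{\varepsilon^2}{36}\cdot\tfrac{|A|\,|B|}{n^2}$. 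I would perform this split simultaneously inside every unsettled part (at most $|\mathcal Q_i|=O(1/\varepsilon)$ binary splits per part, a $2^{O(1/\varepsilon)}$ blow‑up), then re‑chop every resulting class into $\lfloor n/M\rfloor$‑blocks with a pooled leftover to restore equitability (a further refinement, so $q$ does not drop). Iterating the increment step from the trivial partition, the bound $0\le q\le1$ forces termination after $O(\varepsilon^{-O(1)})$ rounds, each multiplying the number of classes by $2^{O(1/\varepsilon)}$, so the final settled equitable partition has fewer than $2^{O(\varepsilon^{-O(1)})}=\exp(\varepsilon^{-C})$ classes for an absolute $C$ — exactly the desired conclusion.

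The delicate point — the one I expect to be the main obstacle — is making the gain $\gamma(\varepsilon)$ genuinely uniform, i.e.\ independent of $n$ and of the current number of classes. The computation above only yields a gain of order $\varepsilon^4|V_i|\,|Q|/n^2$, which is useless if the offending part $V_i$ or the offending class $Q$ is of tiny relative size; the two real dangers are (i) an unsettled part of vanishing relative weight, and (ii) an offending middle class $Q$ of vanishing relative size, where near‑constancy of $g_{V_i}$ on $Q$ even lets singleton witnesses destroy regularity. Both must be removed before the energy count can close: smallness of $V_i$ is controlled by keeping every working partition equitable (so each part has relative weight $\Theta(1/|\mathcal P|)$) and by summing the gains over \emph{all} unsettled parts in a round, while the vertices responsible for tiny middle classes are quarantined into trivially regular structure — one uses here that a single‑vertex part $\{v\}$ is $\varepsilon$‑regular against the partition $\{N_G(v),\,V(G)\setminus N_G(v)\}$, so such vertices never force an increment. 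Arranging this quarantining and the round‑by‑round bookkeeping so that the \emph{collective} gain per round is bounded below by a function of $\varepsilon$ alone, rather than decaying with $|\mathcal P|$, is the technical crux, and I would follow the treatment of Lemma~8.1 in \cite{conlonfox} for these details.
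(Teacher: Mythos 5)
The paper does not prove this lemma: it imports it verbatim, attributed to Conlon and Fox as Lemma~8.1 of~\cite{conlonfox}, and uses it as a black box. There is therefore no proof in the paper to compare your argument against, only a citation.

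As a freestanding attempt your sketch has the right architecture for the Conlon--Fox asymmetric energy increment (the energy functional $q$, level-set auxiliary partitions, the refinement inequality $q(\mathcal P')-q(\mathcal P)\ge \frac{|A|}{n}\mathbb E_v[(g_A-g_{V_i})^2]$, and the $2^{O(1/\varepsilon)}$ per-round blow-up that makes the final bound singly exponential in $\varepsilon^{-O(1)}$). However, there is a genuine gap where you flag one: the increment you compute is of order $\varepsilon^4\,|V_i|\,|Q|/n^2$, which degenerates when the witnessing middle class $Q$ has vanishing relative size, and nothing in your sketch actually bounds the per-round collective gain below by a function of $\varepsilon$ alone. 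You acknowledge this is ``the technical crux'' and then write that you ``would follow the treatment of Lemma~8.1 in~\cite{conlonfox} for these details,'' which is circular --- the very theorem you are trying to prove is invoked to close its own proof. The quarantining remark (singleton parts are trivially regular against the neighborhood/non-neighborhood partition) addresses the wrong danger: it handles tiny \emph{main} parts $V_i$, not tiny \emph{auxiliary} level sets $Q$, and the latter are the source of the degeneracy. Until a uniform lower bound on the gain per round is actually established (or the auxiliary partition is re-engineered so that small level sets cannot block the increment), the termination count $O(\varepsilon^{-O(1)})$ and hence the bound $\exp(\varepsilon^{-C})$ are not justified. Since the paper under review is content to cite the result, the intended ``proof'' here is simply the reference; your write-up, as it stands, is an informative heuristic but not a proof.
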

With this tool at hand, we are now ready to present our proof of Theorem~\ref{thm:upperbound}.
\begin{proof}[Proof of Theorem~\ref{thm:upperbound}]
Let $k\ge 4$ be fixed in the following. Set $c:=1/C$, where $C>0$ is the absolute constant from Lemma~\ref{lem:conlonfox}. We will show that for every sufficiently large positive integer $n$, every $k$-vertex-critical graph of order $n$ admits a critical set of edges of size at most $\frac{n}{(\log n)^c}$. This then implies $f_k(n)=O\left(\frac{n}{(\log n)^c}\right)$, as desired. 

So let $n$ be any sufficiently large positive integer (concretely, such that $(\log n)^{-c}<\frac{1}{2(k-1)}$) and let $G$ be any given $k$-vertex-critical graph of order $n$. Let $\varepsilon:=(\log n)^{-c}<\frac{1}{2}$. By applying Lemma~\ref{lem:conlonfox} to $G$, we find that there exists an equitable partition $(V_i)_{i=1}^{t}$ of $V(G)$ into $t< k(\varepsilon)=\exp(\varepsilon^{-C})=n$ parts (in particular, this implies $V_i\neq \emptyset$ for all $i\in [1,t]$), and for each $i\in [1,t]$ a partition $(V_{i,j})_{j=1}^{j_i}$ of $V(G)$ into at most $K(\varepsilon)=\frac{C}{\varepsilon}=C(\log n)^c$ parts, such that for all $i\in [1,t]$ and $j\in [1,j_i]$ the pair $(V_i,V_{i_j})$ is $\varepsilon$-regular in $G$. 

W.l.o.g. let $V_1$ be a largest set in the equitable partition $(V_i)_{i=1}^{t}$ of $V(G)$, such that $|V_1|=\left\lceil \frac{n}{t}\right\rceil \ge 2$. By vertex-criticality of $G$, for every vertex $v\in V_1$ there exists a proper $(k-1)$-coloring $c_v:V(G)\setminus \{v\}\rightarrow [1,k-1]$ of $G-v$. For each $v\in V(G)$, there exists some color in $[1,k-1]$ that appears in the coloring $c_v$ at least $\frac{|V_1|-1}{k-1}$ times on the set $V_1\setminus \{v\}$. Possibly after permuting the colors of the coloring $c_v$ (for each $v$ individually), we may w.l.o.g assume in the following that color $1$ has this property, for every $v\in V_1$. 

Let us now consider choosing a uniformly random vertex $v\in V_1$. Our goal in the following will be to estimate the expectation of the random variable $X$, defined as the number of neighbors of $v$ in $G$ that are assigned color $1$ by $c_v$. Note that we can write $$X=\sum_{j=1}^{j_1}X_j,$$ where $X_j$ counts the number of neighbors of $v$ in $V_{1,j}$ that are assigned color $1$ by $c_v$. 

Let us now consider any $j\in [1,j_1]$ and let us upper bound the expectation of $X_j$. To do so, let us first consider the case that $d(V_1,V_{1,j})\le \varepsilon$. In this case, we have
$$\mathbb{E}[X_j]\le \mathbb{E}[|N_G(v)\cap V_{1,j}|]=\frac{1}{|V_1|}\sum_{v\in V_1}|N_G(v)\cap V_{1,j}|$$
$$=|V_{1,j}|\frac{|\{(v,w)\in V_1\times V_{1,j}|vw\in E(G)\}|}{|V_1||V_{1,j}|}=d(V_1,V_{1,j})|V_{1,j}|\le \varepsilon |V_{1,j}|.$$

Next, suppose that $d(V_1, V_{1,j})>\varepsilon$. Since the pair $(V_1,V_{1,j})$ is $\varepsilon$-regular by choice of the partitions, we then have that there exists some $\alpha\in \mathbb{R}$ such that $\alpha<d(A,B)<\alpha+\varepsilon$ for all subsets $A\subseteq V_1, B\subseteq V_{1,j}$ with $|A|\ge \varepsilon |V_1|, |B|\ge \varepsilon |V_{1,j}|$. Applying this inequality to the sets $A=V_1, B=V_{1,j}$ and using our assumption above, it follows that $\alpha+\varepsilon>d(V_1, V_{1,j})>\varepsilon$ and thus $\alpha>0$. We now claim that we must have $|c_u^{-1}(\{1\})\cap V_{1,j}|< \varepsilon |V_{1,j}|$ for every $u\in V_1$. Indeed, suppose that for some $u\in V_1$ we had $|c_u^{-1}(\{1\})\cap V_{1,j}|\ge \varepsilon |V_{1,j}|$. Now let us set $A:=c_u^{-1}(\{1\})\cap V_1$ and $B:=c_u^{-1}(\{1\})\cap V_{1,j}$. By the assumption we just made, we have $|B|\ge \varepsilon |V_{1,j}|$, and by our above assumption on the coloring $c_u$ we also have $|A|\ge \frac{|V_1|-1}{k-1}\ge \varepsilon |V_1|$, using our assumption that $n$ is sufficiently large (and thus $\varepsilon=(\log n)^{-c}<\frac{1}{2(k-1)})$. Applying the above inequality, we now obtain that $d(A,B)>\alpha>0$. This, however, is a contradiction, since there can be no edges between $A$ and $B$ in $G$, as $c_u$ is a proper coloring. This contradiction shows that our above assumption was wrong, and so we indeed must have $|c_u^{-1}(\{1\})\cap V_{1,j}|< \varepsilon |V_{1,j}|$ for every $u\in V_1$. In particular, it follows that $\mathbb{E}[X_j]\le \mathbb{E}[|c_v^{-1}(\{1\})\cap V_{1,j}|]\le \varepsilon |V_{1,j}|$ also in this second case for $j$. Having shown $\mathbb{E}[X_j]\le \varepsilon |V_{1,j}|$ for all $j\in [1,j_1]$ we now find, using linearity of expectation:

$$\mathbb{E}[X]= \sum_{j=1}^{j_1}\mathbb{E}[X_j]\le \varepsilon \sum_{j=1}^{j_1}{|V_{1,j}|}=\varepsilon n=\frac{n}{(\log n)^c}.$$

By definition of $X$, this implies the existence of some $v\in V_1$ having at most $\frac{n}{(\log n)^c}$ neighbors of color $1$ in the coloring $c_v$ of $G-v$. Let $R$ be the set of all edges in $G$ that connect $v$ to a neighbor of color $1$ in $c_v$. Then one easily checks that $G-R$ admits a proper $(k-1)$-coloring, obtained by extending $c_v$ with color $1$ at $v$. This means that $R$ is a critical set of edges in $G$ of size at most $\frac{n}{(\log n)^c}$, as desired. This concludes the proof of the theorem.
\end{proof}
\section{Proof of Theorem~\ref{main-thm-short}}\label{sec:main}
In this section, we will prove Theorem~\ref{main-thm-short}. For the proof we use a construction of a family of \emph{circulant graphs}. Recall that a graph $G$ on $N$ vertices is called circulant if its vertices can be labeled $v_0, v_1, \dots, v_{N - 1}$ such that there exists a \emph{distance set} $D \subseteq \{1, 2, \dots, \lfloor\frac{N}{2}\rfloor\}$ for which it holds that any vertex $v_i$ is adjacent to another vertex $v_j$ if and only if $d_N(i, j) \in D$. The circulant graphs we consider are similar to those used previously by Jensen~\cite{jensen} to construct $k$-vertex-critical graphs without critical edges for all $k\ge 5$. However, a crucial modification that is required for our proof is the addition of a significant amount of new edges to Jensen's graphs. We are careful to add these edges in such a way that $(k-1)$-colorability of the vertex-deleted subgraphs, and thus $k$-vertex-criticality, is preserved. On the other hand, the additional edges later prove crucial when establishing the absence of small critical edge sets in our graphs. Namely, given any graph $G'$ obtained from our graph $G$ by omitting a bounded number of edges, the additional edges in $G$ enforce more constraints and redundancy on a supposed proper $(k-1)$-coloring of $G'$. This enables us to prove important structural properties of such colorings, eventually leading to a contradiction and showing that $\chi(G')=k$, as desired. Specifically, the main intermediate goal of the proof will be to show that any proper $(k-1)$-coloring of such a graph $G'$ must be periodic along the natural circular order of the vertices with a certain period.

Before going into more details, let us explicitly define the aforementioned $3$-parameter family of circulant graphs we will use in our proof. Before that we define an important parameter $n_{k,m}$ which, for reasons alluded to above and that shall become concrete later, will from now on be referred to as the \emph{period length}. It is defined as:
    \begin{equation}\notag
        n_{k,m} \coloneqq 
        \begin{cases}
        (k - 1)m & \text{if } k \text{ is odd} \\
        2(k - 1)m & \text{if } k \text{ is even}.
        \end{cases}
    \end{equation}

    Given positive integers $k, m, q$, where $q$ is even, we now define a circulant graph $G_{k,m,q}$ of order $N := qn_{k,m} + 1$ via the following distance set: $D_{k, m, q} \coloneqq D_1 \cup D_2 \cup D_3$, where
    \begin{align*}
        D_1 &\coloneqq \{1, 3, \dots, 2m - 1\}\\
        D_2 &\coloneqq \begin{cases}
            \bigcup\limits_{q' = 0}^{\frac{q}{2} - 1} [2m, (k-3)m + 1] + q'n_{k, m} & \text{if } k \text{ is odd}\\
            \bigcup\limits_{q' = 0}^{\frac{q}{2} - 1} [2m, (k - 4)m + 2] + q'n_{k, m} & \text{if } k \text{ is even}
        \end{cases}\\
        D_3 &\coloneqq \begin{cases}
            \emptyset & \text{if } k \text{ is odd}\\
            \bigcup\limits_{q' = 0}^{\frac{q}{2} - 1} [(k + 2)m - 1, (2k - 4)m + 1] + q'n_{k,m} & \text{if } k \text{ is even}.
        \end{cases}
    \end{align*}
    
    As mentioned before, the difference between the graphs $G_{k,m,q}$ defined here and the graphs studied by Jensen~\cite{jensen} lies in the richer distance set used for our construction: While we shift entire intervals in $D_2$ and $D_3$ with multiples of the period length $n_{k,m}$, Jensen~\cite{jensen} instead only shifted the $2$-element sets consisting of the endpoints of these intervals.

Let us now move on with the discussion of the proof of Theorem~\ref{main-thm-short}. Concretely, we will prove the following statement about the circulant graphs defined above, which then easily implies Theorem~\ref{main-thm-short}.
    \begin{theorem}\label{main-thm}
    Let $k , r, q$ and $m$ be integers such that $k\ge 5$, $r\ge 1$, $q \geq 24r + 12$ is a multiple of $4$ and $m > 18r+2$. Then $G_{k, m, q}$ is a $(k,r)$-graph.
    \end{theorem}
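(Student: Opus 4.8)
The plan is to verify the two defining properties of a $(k,r)$-graph more or less independently, with nearly all of the difficulty concentrated in the second one. For $k$-vertex-criticality it suffices to produce, for each vertex $v$, a proper $(k-1)$-coloring of $G_{k,m,q}-v$; since $D_{k,m,q}$ is only a mild enrichment of the distance set used by Jensen~\cite{jensen}, such colorings can be written down fairly explicitly as patterns of period $n_{k,m}$ along the cyclic order, with a bounded local modification near $v$ that absorbs the extra ``$+1$'' vertex in $N=qn_{k,m}+1$. This is what Lemma~\ref{colorability-lemma} asserts, and its proof, while technical, is routine enough to be deferred to an appendix. The crucial observation is that it now suffices to additionally prove $\chi(G_{k,m,q}-R)\ge k$ for \emph{every} edge set $R$ with $|R|\le r$: taking $R=\emptyset$ then gives $\chi(G_{k,m,q})\ge k$, hence $\chi(G_{k,m,q})=k$ by the colorings above, and the remaining cases of $R$ say precisely that no edge set of size at most $r$ is critical. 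So the heart of the matter is the lower bound $\chi(G_{k,m,q}-R)\ge k$.

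To prove it I would argue by contradiction: fix $R$ with $|R|\le r$ and suppose $c\colon \mathbb{Z}_N\to[1,k-1]$ is a proper $(k-1)$-coloring of $G':=G_{k,m,q}-R$. The first and main step is a local structural analysis of $c$. Because $D_1=\{1,3,\dots,2m-1\}$ makes any two vertices at odd cyclic distance less than $2m$ adjacent, on any window of $2m$ consecutive vertices the even- and odd-indexed positions span a complete bipartite subgraph, so $c$ uses disjoint color sets on the two parts; layering on this the constraints coming from the long shifted intervals in $D_2$ (and $D_3$ when $k$ is even) forces the partition of $[1,k-1]$ induced by $c$ on such a window to be one of only boundedly many ``shapes''. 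Since $m>18r+2$ and $q\ge 24r+12$, at most $2r$ vertices are endpoints of edges of $R$, so all but a bounded number of windows are ``clean'', i.e.\ behave exactly as they do in $G_{k,m,q}$ itself. Pinning down the admissible shapes of a clean window is the content of Claim~\ref{equiv-structure}, and this is the step I expect to be the main obstacle; I also anticipate that the cases $k\in\{6,8\}$ require a separate, substantially longer argument, because there the palette is too small for the generic counting to rule out spurious patterns and the classification has to be pushed through by a more hands-on analysis.

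Granting the local classification, the second step is to propagate it around the cycle. Sliding a clean window by one vertex can change its shape only in a tightly restricted way, so along any run of consecutive clean windows the shape is essentially frozen; quantifying this shows that $c$ is periodic with period exactly $n_{k,m}$ on every such run. Because $R$ creates only $O(r)$ short ``defect'' zones and $q$ is large, these runs are long and their periods must be mutually consistent, which yields the partial periodicity statement of Lemma~\ref{partial-periodicity-lemma}: $c$ is $n_{k,m}$-periodic on all of $\mathbb{Z}_N$ away from the defects. The proof then concludes in essentially one line: an $n_{k,m}$-periodic proper coloring of the cyclic vertex set of size $N=qn_{k,m}+1$ would force $n_{k,m}\mid N$, contradicting $N\equiv 1\pmod{n_{k,m}}$. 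Some care is needed to check that the defect zones cannot ``re-synchronize'' the period so as to swallow this off-by-one, and this is exactly where the quantitative gaps $m\gg r$ and $q\gg r$ are used once more. Chaining the three steps gives the contradiction, so $G_{k,m,q}-R$ has chromatic number at least $k$, and the proof that $G_{k,m,q}$ is a $(k,r)$-graph is complete.
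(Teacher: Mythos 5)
Your proposal matches the paper's architecture almost exactly: vertex-criticality via a deferred colorability lemma (Lemma~\ref{colorability-lemma}, using Jensen's periodic pattern $\varphi_J$), then a contradiction argument for $\chi(G_{k,m,q}-R)\ge k$ built on a local classification of proper colorings on clean windows (Claims~\ref{local-periodicity-claim} through~\ref{equiv-structure}), propagation to a partial periodicity statement (Lemma~\ref{partial-periodicity-lemma}), and the final off-by-one contradiction along the chain $\varphi(v_0)=\varphi(v_{n_{k,m}})=\dots=\varphi(v_{N-1})$, which violates properness of the edge $v_0v_{N-1}$. You also correctly anticipate that $k\in\{6,8\}$ needs a separate, longer argument for the classification step. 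Two small nits: the paper's colorability proof uses the fully periodic $\varphi_J$ on the $N-1=qn_{k,m}$ remaining vertices and does not need a local modification near the deleted vertex; and the final step is not that $n_{k,m}$-periodicity directly forces $n_{k,m}\mid N$, but rather that chaining periodicity across overlapping quarter-windows forces $\varphi(v_0)=\varphi(v_{N-1})$ on two adjacent vertices. Neither affects the soundness of your plan.
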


    To see that Theorem~\ref{main-thm} implies Theorem~\ref{main-thm-short}, let $k\ge 5, r\ge 1$ be given and let $n\ge 8(k-1)(18r+3)(6r+3)+1$ with $n\equiv 1 \pmod{8(k-1)(18r+3)}$ be an integer. Then we can define $m:=18r+3$ and $q:=\frac{n-1}{n_{k,m}}$. Using the conditions on $n$ and the definition of $n_{k,m}$, one easily verifies that $q\ge 24r+12$ and that $q$ is a multiple of $4$. Thus, by Theorem~\ref{main-thm} we have that $G_{k,m,q}$ is a $(k,r)$-graph of order $qn_{k,m}+1=n$, yielding the statement of Theorem~\ref{main-thm}, as desired.

    So it remains to establish Theorem~\ref{main-thm}. The proof has two parts: (1) showing $G_{k,m,q} - v$ is $(k - 1)$-colorable for every vertex $v$, and (2) that for every set $R$ of at most $r$ edges, $G_{k,m,q}- R$ is not $(k-1)$-colorable. The proof of part (1) is similar to the analogous step of the proof by Jensen~\cite{jensen}, however, due to the additional edges, the $(k-1)$-colorability of the vertex-deleted subgraphs of our graphs is not a formal consequence of Jensen's results, and needs to be additionally checked. This simple, yet technical first step of the proof is deferred to Appendix~\ref{sec:vertcrit} to not hinder the presentation of the main novel contribution in this paper, which is in the second part of the proof when showing non-criticality of the edges. As announced earlier, a key step of this part (2) of the proof will be to show that any proper $(k-1)$-coloring of $G_{k,m,q}-R$ for some edge set $R$ of size at most $r$ is periodic (with period $n_{k,m}$) along the circular ordering of the vertices of $G_{k,m,q}$. The following two lemmas formally summarize the statements proved in steps (1) and (2).

    \begin{lemma}[Colorability]\label{colorability-lemma}
        Let $k, r, m, q$ be as in Theorem~\ref{main-thm}. For every $v \in V(G_{k, m, q})$, the graph $G_{k, m, q} - v$ is $(k-1)$-colorable.
    \end{lemma}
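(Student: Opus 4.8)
The plan is to show that $G_{k,m,q}-v$ admits a proper $(k-1)$-coloring by exhibiting an explicit coloring. Since $G_{k,m,q}$ is a circulant graph on $N = qn_{k,m}+1$ vertices with a highly structured distance set, and since for any $v$ all vertex-deleted subgraphs are isomorphic (the cyclic automorphism group acts transitively), it suffices to treat a single convenient choice of $v$, say $v = v_{N-1}$. Deleting $v_{N-1}$ "cuts open" the cycle, turning the circulant graph into a graph on the linearly ordered vertices $v_0, v_1, \dots, v_{N-2}$. The key design feature is that $N - 1 = qn_{k,m}$ is a multiple of the period length $n_{k,m}$, so I would look for a coloring $c: [0, N-2] \to [1,k-1]$ that is periodic with period $n_{k,m}$, i.e. $c(i)$ depends only on $i \bmod n_{k,m}$. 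This reduces the problem to defining a single "block pattern" $c: [0, n_{k,m}-1] \to [1,k-1]$ and checking that every distance in $D_{k,m,q}$ is properly colored.

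\textbf{Construction of the block pattern.} Following (a modification of) Jensen's coloring, I would split into the odd-$k$ and even-$k$ cases according to the definition of $n_{k,m}$. For $k$ odd, $n_{k,m} = (k-1)m$, and I would partition $[0, (k-1)m - 1]$ into $k-1$ consecutive intervals of length $m$ each, assigning color $j+1$ to the $j$-th interval for $j = 0, \dots, k-2$; for $k$ even, $n_{k,m} = 2(k-1)m$ and one uses a suitable pattern of blocks of length $m$ and $2m$ to accommodate the three distance sets $D_1, D_2, D_3$. The odd distances in $D_1 = \{1,3,\dots,2m-1\}$ are handled because two vertices at an odd distance at most $2m-1$ cannot both lie in the "same parity position" within the block structure — more precisely one checks directly that the block pattern assigns them different colors. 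The intervals making up $D_2$ (and $D_3$ when $k$ is even) were specifically chosen so that, modulo $n_{k,m}$, the distance shifts land between blocks of different colors; this is exactly the calculation one has to carry out, interval endpoint by interval endpoint.

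\textbf{Verification.} The bulk of the (routine but fiddly) work is: for each of the three pieces $D_1, D_2, D_3$ of the distance set, and for each residue class $i \bmod n_{k,m}$, verify that $c(i) \neq c(i+d)$ for all $d$ in that piece. Because the coloring is $n_{k,m}$-periodic, the shifts by multiples of $n_{k,m}$ appearing in the unions defining $D_2, D_3$ are irrelevant — only the "base" intervals $[2m, (k-3)m+1]$ etc. matter. Here I must also use the condition on parameters (notably $m$ large enough, and $q$ divisible by $4$ so that $N-1 = qn_{k,m}$ is genuinely a multiple of the period and the wrap-around at the deleted vertex causes no conflict — all edges of $G_{k,m,q}-v_{N-1}$ correspond to pairs $i,j \in [0,N-2]$ with $d_N(i,j) \in D$, and since $N - 1 \equiv 0 \pmod{n_{k,m}}$ the periodic coloring respects these). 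Finally I note that only $k-1$ colors are used, so $\chi(G_{k,m,q}-v) \le k-1$.

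\textbf{Main obstacle.} The conceptual content is small; the difficulty is entirely bookkeeping: correctly setting up the block pattern in the even-$k$ case (where $D_3$ is nonempty and the block lengths alternate), and then checking that none of the three distance families ever connects two equal-colored vertices, which requires splitting into several sub-ranges of residues and tracking how each interval of distances straddles the block boundaries. Because this is exactly the kind of case-heavy verification that obscures the main ideas of the paper, I would carry it out in full in Appendix~\ref{sec:vertcrit} rather than here, as the authors indicate.
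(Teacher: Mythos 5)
Your high-level strategy matches the paper's: use vertex-transitivity to reduce to deleting a single vertex, then exhibit an explicit $n_{k,m}$-periodic coloring and verify it against each of $D_1$, $D_2$, $D_3$. However, the concrete block pattern you propose for odd $k$ is wrong. You say: partition $[0,(k-1)m-1]$ into $k-1$ consecutive intervals of length $m$, giving each interval a single color. Under this scheme $v_0$ and $v_1$ receive the same color, yet $1\in D_1$ means they are adjacent, so the coloring is already not proper. The correct pattern (Jensen's, used in the paper's Appendix~\ref{sec:vertcrit} and illustrated in Table~\ref{table-odd-coloring}) alternates two colors within each block of length $2m$: color $c$ is placed on the indices $t_c+\{0,2,\dots,2m-2\} \pmod{n_{k,m}}$, where the starting point $t_c$ has the same parity as $c$, so odd colors sit on odd indices and even colors on even indices. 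That parity structure is precisely what handles $D_1$: two vertices at odd distance have different index parities and therefore automatically different colors. Your verbal justification (``cannot both lie in the same parity position'') appeals to exactly this idea, but the pattern you actually wrote down has no parity structure at all --- consecutive positions within each $m$-block are identically colored. The even-$k$ case is left entirely schematic (``a suitable pattern of blocks of length $m$ and $2m$''), and given the error in the odd case there is no reason to trust that you had the right pattern in mind; in the paper each even-$k$ color class is again a union of two arithmetic progressions of common difference $2$, not of monochromatic blocks. The framing (reduction to a single vertex, periodicity, wrap-around via $N-1\equiv 0 \pmod{n_{k,m}}$) is fine, but the actual content of the lemma --- a pattern that avoids every distance in $D$ --- is missing.
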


    We will need a few definitions to state the second lemma: Let $G$ be a circulant graph and $G'$ a spanning subgraph of $G$. We call a vertex $v$ \emph{unaffected} (in $G'$) if $N_G(v) = N_{G'}(v)$ and \emph{affected} otherwise. We call a set $S$ of $l$ distinct vertices \emph{consecutive} if $S = \{v_{i_1}, v_{i_2}, \dots, v_{i_l}\}$ where for all $j \in [2, l]$ we have $d_N(i_j, i_{j - 1}) = 1$. Finally, we say that a coloring $\varphi$ is \emph{periodic} (with period $n_{k, m}$) on a set $S$ of vertices if for all $v_i, v_j \in S$ satisfying $d_N(i, j) = n_{k, m}$ we have $\varphi(v_i) = \varphi(v_j)$.

    \begin{lemma}[Partial Periodicity]\label{partial-periodicity-lemma}
        Let $k, r, m, q$ be as in Theorem~\ref{main-thm}. Let $G'=G_{k, m, q}-R$ be a graph obtained from $G_{k, m, q}$ by deleting an edge set $R$ with $|R|\le r$. Let $S$ be a set of at least $3n_{k, m}$ consecutive unaffected vertices in $G'$. Let $S'$ be a set of at most $\frac{N-1}{4}$ consecutive vertices containing $S$. Then every proper $(k - 1)$-coloring of $G'$ is periodic on $S'$.
    \end{lemma}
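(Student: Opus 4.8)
The plan is to establish the lemma in two stages. First I would determine the structure of proper $(k-1)$-colorings when restricted to a window of consecutive unaffected vertices; then I would use this structure together with the long-range edges of $G_{k,m,q}$ to force periodicity, first on $S$ itself and afterwards on the larger set $S'$.

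\emph{Stage 1: canonical colorings on unaffected windows.} I would begin by reading off from the distance set which colorings $f\colon \mathbb Z/n_{k,m}\to[k-1]$ induce a proper periodic coloring of a long stretch of $G_{k,m,q}$. A difference $c\in[1,n_{k,m}-1]$ is permitted between equally colored vertices exactly when $c$ avoids $D_1$, all shifted copies of the $D_2$-interval and (for even $k$) of the $D_3$-interval; a short computation shows the permitted differences are precisely $\{0,\pm 2,\pm 4,\dots,\pm(2m-2)\}$ (with a slightly more elaborate description when $k$ is even, due to $D_3$). Hence every color class of such an $f$ lies in a single parity class and, after halving, in an interval of length $m$; counting forces $f$ to split the $k-1$ colors into two palettes according to parity, each parity being partitioned into consecutive blocks of $m$ positions, one color per block. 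I will call such a coloring \emph{canonical}; the crucial feature is that every canonical coloring has period exactly $n_{k,m}$. The core of Stage 1 is the rigidity statement (the analogue of Claim~\ref{equiv-structure}): on any window of $\Theta(n_{k,m})$ consecutive unaffected vertices, every proper $(k-1)$-coloring of $G'$ agrees with a canonical coloring. Since $G'$ coincides with $G_{k,m,q}$ on such a window and is dense there, one shows the difference computation above is essentially forced without assuming periodicity: the short odd distances in $D_1$ already pin color classes to parity classes, and the long intervals of $D_2$ (and $D_3$) force the block structure.

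\emph{Stage 2: propagating periodicity to $S'$.} Applying Stage 1 to $S$ (which has the required length $\ge 3n_{k,m}$) shows $\varphi|_S$ is canonical, hence periodic with period $n_{k,m}$ on $S$. To extend this to $S'$, I would grow the periodic region one vertex at a time, outward from $S$ in both directions and staying inside $S'$. Suppose $\varphi$ is already known to be periodic on a consecutive set $W\supseteq S$, and let $v_p$ be the next vertex of $S'$ adjacent to $W$ on, say, the right. Because the $D_2$- and $D_3$-intervals occur in copies shifted by multiples of $n_{k,m}$, and because $|S'|\le\frac{N-1}{4}$ prevents any relevant distance from wrapping around the cycle, the neighbours of $v_p$ lying in $W$ display exactly $k-2$ distinct colors --- all colors except the unique one prescribed at offset $p$ by the canonical pattern on $W$ --- so any proper coloring must assign $v_p$ precisely that color, which equals $\varphi(v_{p-n_{k,m}})$; this adjoins $v_p$ to the periodic region. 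Deleted edges and affected vertices are absorbed by redundancy: since $m>18r+2$ and $q\ge 24r+12$, for each of the $k-2$ colors to be ruled out at $v_p$ there remain, after removing the $\le r$ edges of $R$, many surviving neighbours of $v_p$ of that color inside $W$ (invoking several shifted copies of the $D_2$/$D_3$ intervals when a single copy does not suffice), so the color count seen from $v_p$ is unchanged. Iterating until $W$ fills $S'$ gives periodicity on all of $S'$.

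\emph{Main obstacle.} The principal difficulty is the rigidity statement in Stage 1: proving that a proper $(k-1)$-coloring of an unaffected window is \emph{forced} to be canonical, with no exotic alternatives, requires a careful analysis of how color classes can interact across the various (independently placed) block boundaries, and --- as flagged in the introduction --- the cases $k\in\{6,8\}$ are genuinely more delicate and require a separate, longer argument. A secondary but still nontrivial point is the bookkeeping in Stage 2 ensuring that the $\le r$ deletions never decrease the number of colors visible from $v_p$; this is precisely what dictates the quantitative hypotheses $m>18r+2$ and $q\ge 24r+12$.
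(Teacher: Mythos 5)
Your Stage 1 is in the right spirit: the ``canonical coloring'' rigidity analysis on an unaffected window corresponds to the paper's Claims~\ref{local-periodicity-claim}, \ref{structure-claim} and~\ref{equiv-structure}, and you correctly flag $k\in\{6,8\}$ as delicate. The gap is in Stage 2. You assert that the neighbors of $v_p$ inside the already-colored window $W$ exhibit $k-2$ distinct colors, leaving $v_p$'s color uniquely determined. That is not true even in a fully unaffected graph: the edges of $G_{k,m,q}$ between $v_p$ and $W$ rule out only $k-3$ colors, leaving exactly the two options $\varphi_e(v_p)$ and $\varphi_e(v_{p-1})$ --- this is precisely the content of Claim~\ref{two-colors-claim} in the paper, and it is not an artifact of restricting neighbors to $S$ rather than $W$. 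Concretely, if $v_p$ sits at a color-block boundary (i.e.\ $p$ is the start of a new block in the canonical pattern), then the only vertex of color $\varphi_e(v_{p-1})$ that is adjacent to $v_p$ and lies to the left of $v_p$ is $v_{p-1}$ itself: the other vertices of that color sit at distances that reduce mod $n_{k,m}$ to odd residues outside every $D_2$/$D_3$ interval, so shifting by multiples of $n_{k,m}$ does not supply extra neighbors of that color. If the single edge $v_{p-1}v_p$ happens to be in $R$, there are no ``many surviving neighbours'' of that color --- there is one, and it may be deleted --- so the color of $v_p$ is genuinely underdetermined by adjacency alone.

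Because of this, the propagate-by-forcing plan cannot close the argument, and a second mechanism is needed to rule out the ``wrong'' option $\varphi_e(v_{p-1})$. The paper's route is to accept the two-option conclusion (Claim~\ref{two-colors-claim}) and then prove a parity statement (Claim~\ref{2m-claim}): in any window of $2m$ consecutive vertices with indices below $\frac{N-1}{2}$, if a color appears more than $2r$ times then all its occurrences share the parity of their index. The final contradiction locates the minimal $i$ with $\varphi(v_{i+1})\neq\varphi_e(v_{i+1})$, deduces $\varphi(v_i)=\varphi(v_{i+1})$, and then uses the two-option constraint to manufacture a window of $\approx m$ consecutive vertices restricted to two colors; after discarding at most $2r$ occurrences of one color via Claim~\ref{2m-claim}, the remaining $\ge m-2r$ vertices of the other color must include two consecutive indices, again contradicting Claim~\ref{2m-claim}. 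This parity mechanism --- not a neighbor-count-per-step calculation --- is what the quantitative hypotheses $m>18r+2$ and $q\ge 24r+12$ are feeding, and it is entirely absent from your proposal.
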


    We first demonstrate how Theorem~\ref{main-thm} can be deduced from Lemmas~\ref{colorability-lemma} and~\ref{partial-periodicity-lemma} before proceeding with the proof of Lemma~\ref{partial-periodicity-lemma}.

    \begin{proof}[Proof of Theorem~\ref{main-thm}, assuming Lemma~\ref{colorability-lemma} and~\ref{partial-periodicity-lemma}]
        Let $G := G_{k, m, q}$ and $D := D_{k, m, q}$. By Lemma~\ref{colorability-lemma} $G-v$ is $(k-1)$-colorable for every $v$, so in particular $\chi(G) \leq k$. It is left to show that every graph obtained from $G$ by removing at most $r$ edges is not $(k-1)$-colorable. Towards a contradiction, suppose that there exists a graph $G'=G-R$ with $R\subseteq E(G), |R|\le r$ that admits a proper $(k-1)$-coloring $\phi$.

        We start by showing that every set $S'$ of $\frac{N-1}{4}$ consecutive vertices has a subset of $3n_{k,m}$ consecutive unaffected vertices. Then we will use Lemma~\ref{partial-periodicity-lemma} to show that $\varphi$ is periodic. Finally, using the periodicity we will show that $\varphi$ is not proper.

        Since $G$ is symmetric with respect to cyclic shifts, we may assume without loss of generality that $S' = \{v_1, \dots, v_{\frac{N - 1}{4}}\}$ in the following. Let $a$ be the number of affected vertices in $S'$. Then $a \leq 2|R|\le 2r$. We next partition the set of unaffected vertices of $S'$ into $a + 1$ sets $U_0, \dots, U_{a}$ where $v_i \in U_j$ if and only if there are exactly $j$ affected vertices with a smaller index than $i$. Observe that each $U_j$ is a set of consecutive unaffected vertices and that their union has $\frac{N-1}{4} - a$ vertices. Then the largest set $U_i$ satisfies
        \begin{equation}\notag
            |U_i| \geq \frac{\frac{N - 1}{4} - 2r}{2r + 1} = \frac{q}{8r + 4}n_{k, m} - \frac{2r}{2r + 1} > 3n_{k, m} - 1.
        \end{equation}
        This shows that every set $S'$ of $\frac{N-1}{4}$ consecutive vertices contains a subset $S$ of $3n_{k,m}$ consecutive unaffected vertices, as desired. Applying Lemma~\ref{partial-periodicity-lemma}, we can now conclude that $\varphi$ is periodic on every set $S'$ of $\frac{N-1}{4}$ consecutive vertices.

        Now we will use the periodicity to show that $\varphi$ cannot be proper.
        From what we argued above it follows that there is some unaffected vertex in $G'$. By cyclic symmetry, we may w.l.o.g. assume that $v_0$ is unaffected. Then $v_0$ and $v_{N-1}$ are adjacent in $G$ (because $d_N(0, N-1) = 1 \in D_1$) and thus also in $G'$ (as $v_0$ is unaffected). We will now show that they are assigned the same color by $\varphi$, arriving at the desired contradiction. Since $q > 4$, we have $n_{k, m} < \frac{N - 1}{4}$. Hence, for every $i \in [1, q]$ there is a set of $\frac{N-1}{4}$ consecutive vertices containing $v_{(i-1)n_{k,m}}$ and $v_{in_{k,m}}$. Therefore, we can use the periodicity to get

        \begin{equation}\label{contradiction-eqn}
            \varphi(v_0) = \varphi(v_{n_{k,m}}) = \dots = \varphi(v_{qn_{k,m}}).
        \end{equation}

        Recall that $N-1 = qn_{k,m}$, so (\ref{contradiction-eqn}) tells us that $\varphi(v_{0}) = \varphi(v_{N-1})$. This implies that $\varphi$ is not proper, so our assumption was false. Hence, every graph obtained from $G$ by removing at most $r$ edges has chromatic number~$k$. It follows that $G_{k,m,q}$ is a $(k,r)$-graph, concluding the proof of the theorem.
    \end{proof}

As mentioned, the proof of Lemma~\ref{colorability-lemma} can be found in Appendix~\ref{sec:vertcrit}. Thus our goal in the remainder of this section is to prove Lemma~\ref{partial-periodicity-lemma}, i.e. to show that every proper $(k-1)$-coloring of $G'$ is periodic on a consecutive set $S'$ if the latter contains a large enough subset of consecutive unaffected vertices. Before starting the formal proof, we give a brief overview, providing some more intuition behind both the statement and proof of Lemma~\ref{partial-periodicity-lemma}. We hope that this helps the reader's orientation during the rather long proof. 

\medskip

\paragraph*{\textbf{Motivation and overview of the proof.}} To give the proper motivation, we have to start by considering the structure of the proper $(k-1)$-colorings of the vertex-deleted subgraphs $G_{k,m,q}-v$ we use in the proof of Lemma~\ref{colorability-lemma} (see Appendix~\ref{sec:vertcrit} for more details). Note that by circular symmetry of the graphs, it suffices to consider $v=v_0$. For odd values of $k$, the proper $(k-1)$-coloring (called $\phi_J$) of $G_{k,m,q}-v_0$ looks as follows:

\begin{table}[H]
    \centering
\begin{tabular}{c c c c c c c}
    $1$& $2$& $1$& $2$& \dots& $1$& $2$\\
    $3$& $4$& $3$& $4$& \dots& $3$& $4$\\
    \vdots& & & &\vdots & &\vdots \\
    $k-4$& $k-3$& $k-4$& $k-3$& \dots& $k-4$& $k-3$\\
    $k-2$& $k-1$& $k-2$& $k-1$& \dots& $k-2$& $k-1$\\
\end{tabular}
\caption{Table illustrating the proper $(k-1)$-coloring $\varphi_J$ of $G_{k,m,q}-v_0$ for odd $k$ on the first $\frac{k-1}{2}\cdot 2m=(k-1)m=n_{k,m}$ vertices $v_1,\ldots,v_{n_{k,m}}$. Every row has $2m$ elements. For every $i \in \left[0, \frac{k - 1}{2} - 1\right]$ and $j \in [1, 2m]$ the entry in the $i$th row and $j$th column  represents the color of vertex $v_{2mi + j}$. The full coloring $\phi_J$ is obtained by periodically repeating the illustrated color pattern of length $n_{k,m}$ exactly $q$ times, such that all $qn_{k,m}=N-1$ vertices are colored.\label{table-odd-coloring}}
\end{table}

If $k$ is even, then we cannot pair up the colors as nicely as in the odd case above. Instead, we pair up each color with two different colors. Each color will appear in two rows instead of one and the pairs sharing a color are as far from each other as possible:
\begin{table}[H]
    \centering
\begin{tabular}{c c c c c c c}
    $1$& $2$& $1$& $2$& \dots& $1$& $2$\\
    $3$& $4$& $3$& $4$& \dots& $3$& $4$\\
    \vdots& & & &\vdots & &\vdots \\
    $k-3$& $k-2$& $k-3$& $k-2$& \dots& $k-3$& $k-2$\\
    $k-1$& $1$& $k-1$& $1$& \dots& $k-1$& $1$\\
    $2$& $3$& $2$& $3$& \dots& $2$& $3$\\
    \vdots& & & &\vdots & &\vdots \\
    $k-4$& $k-3$& $k-4$& $k-3$& \dots& $k-4$& $k-3$\\
    $k-2$& $k-1$& $k-2$& $k-1$& \dots& $k-2$& $k-1$\\
\end{tabular}
\caption{Table illustrating the $(k-1)$-coloring $\varphi_J$ of $G_{k,m,q}-v_0$  for even $k$ restricted to the first $(k-1)\cdot 2m=n_{k,m}$ vertices $v_1,\ldots,v_{n_{k,m}}$. Every row still has $2m$ elements and there are $(k - 1)$ rows. If we repeat the illustrated color pattern of length $(k-1)\cdot 2m=n_{k,m}$ exactly $q$ times, then we obtain the full coloring $\phi_J$ of all the $N-1=qn_{k,m}$ vertices.\label{table-even-coloring}}
\end{table}

The coloring $\phi_J$ illustrated above is the same coloring that Jensen used in his paper~\cite{jensen}. The main idea of Jensen's proof of non-criticality of small sets of edges, all incident to the same vertex, was as follows (let us denote Jensen's graph in the following by $J$): Let $v$ be a vertex incident to all removed edges. W.l.o.g. (by cyclic symmetry) let $v=v_0$. Jensen then proceeds to show that the $(k-1)$-coloring $\phi_J$ is, up to color permutation, the unique proper $(k-1)$-coloring of the graph $J-v_0$. One can then derive a contradiction by checking that the vertex $v_0$ has too many neighbors in each of the $(k-1)$ colors in this unique coloring of $J-v_0$, so that no matter which incident edges of $v_0$ are removed, a monochromatic edge remains.

In contrast, in the proof of our Lemma~\ref{partial-periodicity-lemma}, we cannot assume any specific structure of the set of edges $R$ that we delete, which could be distributed all over the graph and theoretically break Jensen's color-pattern at various places. Thus, it seems difficult to obtain uniqueness of colorings as in Jensen's proof. 
Instead, to show Lemma~\ref{partial-periodicity-lemma} we take a different approach that circumvents uniqueness altogether and instead establish several structural properties of the $(k-1)$-colorings of the graph $G'=G_{k,m,q}-R$. These are strong enough so that we may eventually use them to conclude periodicity on any long consecutive set $S'$ of vertices containing a sufficiently large unaffected subset $S$, yielding the statement of the lemma. 

While reading the statements and proofs of these structural properties, it shall be helpful for the reader to keep the colorings $\phi_J$ of the graph $G_{k,m,q}-v$ in mind as a motivation: While, as mentioned, we do not prove here that there is a unique $(k-1)$-coloring of $G'$ that extends the coloring $\phi_J$ of $G_{k,m,q}-v_0$ illustrated above, the structural properties we prove about proper $(k-1)$-colorings of $G'$ still resemble features shared by the coloring $\phi_J$ illustrated above. For example, we eventually establish that the color classes in any proper $(k-1)$-coloring of $G'$ are made up of a union of blocks of the form $\{v_i,v_{i+2},\ldots,v_{i+2m-2}\}$, just as in the coloring $\phi_J$.

We will now proceed with the proof of the lemma, with the following roadmap: Starting from any proper $(k-1)$-coloring $\varphi$ of $G'$ we first make some observations about the restriction of $\varphi$ to the unaffected part $S$ of the graph. In particular, that it is periodic on $S$ and has a certain structure resembling the coloring $\varphi_J$ (Claims~\ref{local-periodicity-claim},~\ref{structure-claim},~\ref{equiv-structure}). Next, we will show that the edges incident to $S$ already restrict the color of every vertex in the bigger consecutive portion $S'$ of the graph to at most two options (Claim~\ref{two-colors-claim}). Then, we will show that if many vertices close to each other in the consecutive ordering have the same color, their indices must have the same parity (Claim~\ref{2m-claim}). The latter statement, in combination with the remaining aforementioned properties, is then instrumental for deducing that $\varphi$ must be periodic on $S'$. The argument for this last step can essentially be thought of as an induction along the cyclic ordering of vertices: We consider the first vertex $v_i$ whose assigned color in $\phi$ violates $n_{k,m}$-periodicity, and then derive a contradiction from there.

\begin{proof}[Proof of Lemma~\ref{partial-periodicity-lemma}]
We start by introducing some notation that will be used repeatedly in the following:
Given some $i \in [1, (q-1)n_{k,m} + 1]$, we denote by $H_i = \{v_j \mid j \in [i, i + n_{k,m} - 1]\}$
    the consecutive set of $n_{k,m}$ vertices starting with $v_i$. Furthermore, throughout the proof we will denote by $F:=\{0,2,\ldots,2m-2\}$ the set of $m$ consecutive even integers starting with $0$. We will also denote $G:=G_{k,m,q}$ and $D:=D_{k,m,q}$ in the following.

    We start the proof by restricting the sets $S$ and $S'$ that we have to consider in the statement of the lemma. As both $S$ and $S'$ are sets of consecutive vertices, we can partition $S'$ into $S_1 \cup S \cup S_2$ where the $S_i$ are (potentially empty) sets of consecutive vertices. If we can prove the lemma for all $S''$ such that $S'' \setminus S$ is a set of consecutive vertices, we can then apply it to both $S_1 \cup S$ and $S \cup S_2$ to obtain that it also holds for the original $S'$. Therefore, it is sufficient to consider $S'$ such that $S' \setminus S$ is a set of consecutive vertics.
    
    We also note that cyclic shifts and reversing the indices of the vertices are graph isomorphisms which preserve the cyclic distances of indices. Using a cyclic shift, possibly in conjunction with a reversal, we may assume without loss of generality in the following that $S = \left\{v_1, \dots, v_{3n_{k, m}}\right\}$ and $S' = \left\{v_1, \dots, v_{\frac{N-1}{4}}\right\}$.

    All vertices considered in the remainder of the proof will be in the sets $\left\{v_1, \dots, v_{\frac{N-1}{2}}\right\}$ or $\left\{v_1, \dots, v_{\frac{N-1}{4} + n_{k,m}}\right\}$. Since $\frac{N-1}{4}+n_{k,m}< \frac{N}{2}$ by our assumptions on the parameters, the cyclic distance of any two vertices we consider will always be the absolute value of the difference of their indices.
    
    Let $\varphi:V(G')\rightarrow [1,k-1]$ be any given proper $(k - 1)$-coloring of $G'$. Our goal is to show that $\varphi$ is periodic on $S'$. We start by showing that it is periodic on $S$. The proof of the following claim draws from ideas in Jensen's proof~\cite{jensen} of unique colorability of vertex-deleted subgraphs of his graphs.

    \begin{claim}[Local Periodicity]\label{local-periodicity-claim}
        The coloring $\varphi$ satisfies the following:

        \begin{enumerate}[label=(\alph*)]
            \item\label{lp-first} Every set of $n_{k, m}$ consecutive vertices in $S$ has exactly $\frac{n_{k, m}}{k-1}$ vertices of each color under $\varphi$.
            \item\label{lp-second} The coloring $\varphi$ is periodic on $S$.
        \end{enumerate}
    \end{claim}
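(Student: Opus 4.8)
I would deduce \ref{lp-second} from \ref{lp-first} by a short sliding-window argument, so the real content is \ref{lp-first}. Since $S=\{v_1,\dots,v_{3n_{k,m}}\}$ consists of unaffected vertices, for any set $W$ of $n_{k,m}$ consecutive vertices inside $S$ we have $G'[W]=G[W]$; as cyclic shifts are automorphisms of $G$, it is enough to prove that every proper $(k-1)$-coloring of $G[\{v_1,\dots,v_{n_{k,m}}\}]$ uses each color exactly $t:=\frac{n_{k,m}}{k-1}$ times. Because there are at most $k-1$ color classes and they partition the $n_{k,m}=(k-1)t$ vertices, this in turn reduces to showing that \emph{every} color class $C$ satisfies $|C|\le t$.

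\textbf{Bounding a color class.} Writing $C=\{v_i:i\in I\}$ with $I\subseteq[1,n_{k,m}]$, I would use the distance set in two stages. First, $D_1=\{1,3,\dots,2m-1\}$ is precisely the set of odd numbers up to $2m-1$, so if $i,j\in I$ and $|i-j|\le 2m-1$, then $|i-j|$ is even, i.e.\ $i\equiv j\pmod 2$. Hence the restriction of $I$ to any window of $2m$ consecutive integers lies in one parity class, hence in one block $F+a=\{a,a+2,\dots,a+2m-2\}$, and in particular has size at most $m$. Second, the longer intervals of the distance set kill the ``medium'' differences: for odd $k$ one has $D_2\supseteq[2m,(k-3)m+1]$, and for even $k$ one has $D_2\supseteq[2m,(k-4)m+2]$ and $D_3\supseteq[(k+2)m-1,(2k-4)m+1]$, so no two elements of $I$ may be at such a distance. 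Chaining these constraints shows that $I$ is a union of a bounded number of blocks whose mutual ``gaps'' must avoid the forbidden medium ranges; a direct (if slightly tedious) bookkeeping of where such blocks can fit inside $[1,n_{k,m}]$, an interval of length $(k-1)t$, then yields $|I|\le t$. For odd $k$ this is quick (one checks $I$ lies in the union of at most two blocks of combined size $\le m$); in fact, for odd $k$ one can alternatively observe that $G[\{v_1,\dots,v_{n_{k,m}}\}]$ decomposes into $m$ vertex-disjoint copies of $K_{k-1}$ — for each odd $a\in\{1,3,\dots,2m-1\}$ take the two vertices in columns $a$ and $a+1$ of every row of the array in Table~\ref{table-odd-coloring} — which instantly gives $|C|\le m$. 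I expect the even-$k$ bookkeeping to be the main obstacle: there each color occupies two rows of the array in Table~\ref{table-even-coloring}, the ``long'' distances in $D_3$ are what allow the two rows of a color class to coexist, and the precise arithmetic relations between $k$, $m$ and the interval endpoints of $D_{k,m,q}$ genuinely enter. (Note that a clique-partition shortcut as above is not available for even $k$, in particular for $k\in\{6,8\}$, since $G[\{v_1,\dots,v_{n_{k,m}}\}]$ then contains no $K_{k-1}$.)

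\textbf{Deducing \ref{lp-second}.} Let $v_i,v_j\in S$ with $d_N(i,j)=n_{k,m}$; we may take $i<j$, and then $j=i+n_{k,m}$ and $1\le i\le 2n_{k,m}$ (using $i,j\le 3n_{k,m}$ while $N-n_{k,m}>3n_{k,m}$). Both $W=\{v_i,\dots,v_{i+n_{k,m}-1}\}$ and $W'=\{v_{i+1},\dots,v_{i+n_{k,m}}\}$ are sets of $n_{k,m}$ consecutive vertices contained in $S$, so by \ref{lp-first} the color $\varphi(v_i)$ occurs exactly $t$ times on each of $W$ and $W'$. Since $W'=(W\setminus\{v_i\})\cup\{v_{i+n_{k,m}}\}$ and $v_i$ carries color $\varphi(v_i)$, the count can remain equal only if $\varphi(v_{i+n_{k,m}})=\varphi(v_i)$, i.e.\ $\varphi(v_i)=\varphi(v_j)$. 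As $v_i,v_j$ were arbitrary, $\varphi$ is periodic on $S$.
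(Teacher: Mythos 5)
Your deduction of \ref{lp-second} from \ref{lp-first} is exactly the paper's sliding-window argument, and your treatment of odd $k$ coincides with the paper's: the clique partition you describe (for each odd $a\in\{1,3,\dots,2m-1\}$ take columns $a,a+1$ of every row) is precisely the partition of $H_i$ into $m$ copies of $K_{k-1}$ used there. The gap is the even-$k$ case, which you explicitly leave at the level of ``a direct (if slightly tedious) bookkeeping... then yields $|I|\le t$'' and ``I expect the even-$k$ bookkeeping to be the main obstacle.'' This is not a detail you can defer: the naive block-and-gap accounting does not obviously close. For even $k$ the forbidden differences in $[1,n_{k,m}-1]$ leave a large ``medium'' gap of allowed differences of width roughly $6m$ (e.g.\ for $k=6$, all of $[2m+3,8m-2]$ is allowed), so a color class is not forced into a union of one or two blocks of $F$-type, and simply intersecting windows of length $2m$ with parity constraints gives only $|I|\le$ roughly $3m$, not $2m$.

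The paper closes this with a genuinely different device rather than bookkeeping. It first enlarges the candidate index set to a convenient $U$ of exactly $10m$ elements contained in a window of $n_{k,m}$ consecutive vertices, then partitions $U$ into $m$ groups $B_l$ of size $10$ indexed by the residue pair $\{2l,2l+1\}$ mod $2m$, and proves that for each $l$ the induced subgraph $G[V_l]$ contains a spanning copy of a fixed $10$-vertex graph $H$ (two nested $5$-cycles joined appropriately) whose independence number is $2$. Each group therefore contributes at most two vertices of the given color, giving $|I|\le 2m$. Two points in your plan would need repair to make this route available: (i) this is not a consequence of forbidding pairwise ``medium'' differences one at a time, it exploits a specific $10$-vertex dependency pattern, and (ii) the argument uses edges of $G'$ to vertices of $U$ that may lie \emph{outside} the window of $n_{k,m}$ vertices under consideration (but still inside $S$, which is why $|S|=3n_{k,m}$ is important), so your proposed reduction to properly coloring $G[\{v_1,\dots,v_{n_{k,m}}\}]$ in isolation discards constraints the paper actually uses.
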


    \begin{claimproof}
        We first assume \ref{lp-first} and show \ref{lp-second}: Let $i \in [1, 2n_{k,m}]$ and consider $H_i$ and $H_{i + 1}$, the sets of $n_{k, m}$ consecutive vertices starting at $v_i$ and $v_{i + 1}$ respectively. Their intersection satisfies
        \begin{equation}\label{loc-per-intersection-eqn}
            H_i \cap H_{i+1} = H_i \setminus \{v_i\} = H_{i + 1} \setminus \{v_{i + n_{k,m}}\}.
        \end{equation}
        From \ref{lp-first} we know that $\varphi(v_i)$ appears $\frac{n_{k,m}}{k - 1}$ times in $H_i$ and in $H_{i+1}$. From~(\ref{loc-per-intersection-eqn}) we can conclude that only $\frac{n_{k,m}}{k-1} - 1$ vertices in $H_i \cap H_{i+1}$ have color $\varphi(v_i)$. Therefore, $\varphi(v_i) = \varphi(v_{i + n_{k, m}})$. As $i\in [1,2n_{k,m}]$ was arbitrary, this establishes that $\phi$ is periodic on $S=\{v_1,\ldots,v_{3n_{k,m}}\}$.

        It remains to prove \ref{lp-first}. To do so, we consider two cases depending on the parity of $k$:

        \medskip
    
        \paragraph*{\textbf{Case~1.} $k$ is odd.}
     
        Let the set of $n_{k, m}$ consecutive unaffected vertices be $H_i$ for some $i \in [1, 2n_{k,m} + 1]$. Let 
        \begin{equation}\notag
            A := [1, n_{k,m}] \cap (\{i, i + 1\} + 2m\mathbb{Z}).
        \end{equation}
        Intuitively, $A$ contains exactly the first vertex of each color in the coloring $\varphi_J$ if the coloring was shifted to start at $i$. We note that $|A| = k - 1$. For $j \in [1, m]$ we let 
        
        \begin{equation}\notag
            V_j := \{\, v_{a + 2(j - 1)} \mid a \in A \,\}    
        \end{equation}
        and claim that these sets partition $H_i$ into cliques of size $k-1$: 
        
        The sets are cliques: Let $v_x, v_y \in V_j$ be distinct. Then 
        \begin{equation}\notag
            d_N(x, y) = |x - y| \in \{1, 2m - 1\} \cup [2m, (k - 3)m + 1].
        \end{equation} 
        We note that $\{1, 2m - 1\} \subseteq D_1$ and $[2m, (k - 3)m + 1] \subseteq D_2$. Therefore, all distances are in $D$. Since furthermore all vertices of $V_j$ are contained in $S$ and are thus unaffected, it follows that $V_j$ is indeed a clique in $G'$ for every $j \in [1, m]$.

        The sets are a partition of $H_i$: Given any $v_x \in H_i$ let us write $x = i + 2am + b$ where $a \in \Z_{\ge 0}$ and $b \in [0, 2m - 1]$. Then $v_x \in V_{\lceil\frac{b}{2}\rceil}$. As every $V_j$ has exactly $k-1$ elements, there are $m$ of these sets and $|H_i|=n_{k,m}=(k-1)m$, it follows that $V_1, \dots, V_m$ form a partition of $H_i$ into cliques of size $k-1$. 

        Every color may appear at most once in each clique and there are only $k-1$ colors in total. Therefore, in the coloring $\varphi$ every color appears exactly once in each $V_j$ and $m = \frac{n_{k,m}}{k-1}$ times in $H_1$. 

        \medskip

        \paragraph*{\textbf{Case~2.} $k$ is even.}

        We will show that every color appears at most $2m$ times in every set of $n_{k, m}$ consecutive unaffected vertices. As $n_{k,m} = 2m(k - 1)$ and there are $k-1$ colors, this then implies that every color must appear exactly $\frac{n_{k,m}}{k-1}=2m$ times, as desired.

        Recall that in the odd case, we partitioned $H_i$ into cliques and used that every color may appear at most once in each clique. The even case is more difficult, but follows a similar idea. In the beginning, we will restrict the vertices of a color to $10m$ options in $H_i$. To show that there are at most $2m$ vertices of the same color, we then partition these $10m$ vertices into $m$ subgraphs on $10$ vertices, each of which will have a spanning subgraph isomorphic to the graph $H$ depicted in Figure~\ref{fig:H}.

        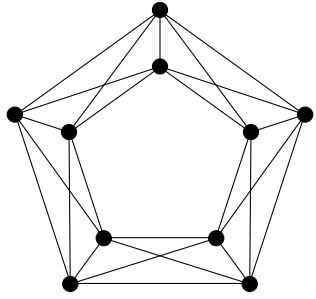
\begin{figure}[ht]
            \centering
        \begin{tikzpicture}
            \node[name=l_0, regular polygon, regular polygon sides=5, minimum size = 2.5cm, draw] at (2,0) {};
            \node[name=l_1, regular polygon, regular polygon sides=5, minimum size = 4cm, draw] at (2,0) {};
            \foreach \x in {1,...,5}
            {
                \draw[fill=black] (l_0.corner \x) circle (0.1cm);
                \draw[fill=black] (l_1.corner \x) circle (0.1cm);
                \draw (l_0.corner \x) -- (l_1.corner \x);
            }
            \draw (l_0.corner 1) -- (l_1.corner 2);
            \draw (l_0.corner 1) -- (l_1.corner 5);
            \draw (l_0.corner 2) -- (l_1.corner 1);
            \draw (l_0.corner 2) -- (l_1.corner 3);
            \draw (l_0.corner 3) -- (l_1.corner 2);
            \draw (l_0.corner 3) -- (l_1.corner 4);
            \draw (l_0.corner 4) -- (l_1.corner 3);
            \draw (l_0.corner 4) -- (l_1.corner 5);
            \draw (l_0.corner 5) -- (l_1.corner 1);
            \draw (l_0.corner 5) -- (l_1.corner 4);
        \end{tikzpicture}
        \caption{The graph $H$.}\label{fig:H}
        \end{figure}

        It is easy to see that $H$ has independence number two, and thus every color may appear at most twice in each of these $10$-vertex subgraphs. This will then imply that every color appears at most $2m$ times in $H_i$.

        Consider $H_i \subseteq S$ and a color $c$. Our goal is to show that $c$ appears at most $2m$ times in $H_i$. If it does not appear at all, this is trivially satisfied, so suppose that it appears at least once. Let $j$ be minimal such that $v_j \in H_i$ and $\varphi(v_j) = c$. Let $A$ be the set of indices of the vertices of color $c$ in $H_i$. We will now use the definition of the edge set to restrict $A$ to $10m$ options. We note that the set we define here is larger than the one that immediately follows from the forbidden distances in $D$, but it will be more convenient to work with.
        \begin{align*}
            A &\subseteq j + ([0, n_{k,m} - 1] \setminus D)\\
            &\subsetneq j + ([0, 2m - 1] \cup [(k-4)m, (k+2)m-1] \cup [(2k - 4)m, (2k - 2)m - 1])\\ &=: U.
        \end{align*}

        By symmetry, we may assume without loss of generality that $U \subseteq S$: If not, take $j$ to be maximal with $v_j \in H_i$ and $\phi(v_j)=c$ and then define $U$ by subtracting the above intervals from $j$ rather than adding them. We note that for all $v_j \in S$ at least one of $v_{j + (2k - 2)m - 1}$ and $v_{j - (2k - 2)m + 1}$ is also in $S$.

        We define        
        \begin{equation}\notag
            U_0 := \{j, j + 1\} + \{0, (k - 4)m, (k - 2)m, km, (2k-4)m\}
        \end{equation}

        and for all $l \in [0,m - 1]$ let $B_l := 2l + U_0$ and
        $V_l := \{v_x | x \in B_l\}$.
    
        Our goal is to show that the $V_l$ partition the vertices with indices in $U$ and that $G'[V_l]=G[V_l]$ has a spanning subgraph isomorphic to $H$. To prove that $V_0,V_1,\ldots,V_{m-1}$ form a partition of $U$, we observe that for all $x \in U$ we have $x \in B_l$ if and only if $x - j \equiv 2l \pmod{2m}$ or $x - j \equiv 2l + 1 \pmod{2m}$. 
        
        Now let any $l\in [0,m-1]$ be given and let us find a spanning subgraph of $G[V_l]$ isomorphic to $H$. To do so, we label the vertices in $V_l$ as $\{u_0, u_0', u_1, u_1', u_2, u_2', u_3, u_3', u_4, u_4'\}$, increasingly along the linear ordering $(v_0,v_1,\ldots,v_{N-1})$. Then we claim that for $a < b$ such that $b - a \in \{1, 4\}$ the four vertices $u_a, u_a', u_b, u_b'$ form a clique. Indeed, the pairwise distances of such vertices are in $\{1\}\cup(\{-1, 0, 1\} + \{2m, (k - 4)m, (2k - 4)m\}) \subseteq D$. Therefore, and since the vertices in $U$ are all unaffected, the vertices in $V_l$ induce a graph which has $H$ as a subgraph.

        Using this fact it follows that the color $c$ can appear at most $2$ times on each set $V_l$. Since by what we have shown above $V_0\cup \cdots \cup V_{m-1}=\{v_j|j\in U\}$ contains all vertices of color $c$ in $H_i$, it follows that color $c$ appears in total at most $2m$ times on vertices in $H_i$ under $\phi$. But since there are $(k-1)$ colors $c\in [1,k-1]$ and since $|H_i|=n_{k,m}=2m(k-1)$, this also implies that every color appears \emph{exactly} $2m$ times in $H_i$ in the coloring $\phi$.

        Summarizing, we have now proved that on every set of $n_{k,m}$ consecutive unaffected vertices in $S$, every color appears exactly $2m = \frac{n_{k,m}}{k-1}$ times, as desired.        
    \end{claimproof}

    Recall that our goal is to show that $\varphi$ is periodic on $S'$, and we already know that it is periodic on $S$. Therefore, it is natural to introduce the expected color of a vertex: The color it would have if the coloring on $S$ was continued periodically with period $n_{k,m}$. More formally, let $i \in [0,\ldots,N-1]$. We define the \emph{expected color} $\varphi_e(v_i)$ to be the color of all $v_{i_0} \in S$ such that $i \equiv i_0 \pmod{n_{k,m}}$. This is always well-defined by Claim~\ref{local-periodicity-claim}. 
    
    To write expressions in the following more conveniently, we shall consider an extension of the vertex-set $\{v_0,\ldots,v_{N-1}\}$ of $G$ to an infinite set $\{v_i|i\in\mathbb{Z}\}$ and to extend the definition of $\phi_e(v_i)$ to this set periodically: For every $i\in \mathbb{Z}$ we set $\phi_e(v_i):=c$ where $c\in [1,k-1]$ is the color of all $v_{i_0} \in S$ such that $i \equiv i_0 \pmod{n_{k,m}}$ under $\phi$.

    \begin{remark}\label{local-periodicity-rem}
        It follows immediately from the definition and Claim~\ref{local-periodicity-claim} that the mapping defined by $i\rightarrow \phi_e(v_i)$ is periodic on $\mathbb{Z}$ with period $n_{k,m}$ and that every $n_{k,m}$ consecutive vertices of $G'$ contain exactly $\frac{n_{k,m}}{k-1}$ vertices of each expected color.
    \end{remark}

    Our next intermediate goal will be to show that the coloring $\varphi$ has a structure resembling that of $\varphi_J$ on $S$. As a consequence, that structure will also hold for the expected colors $\varphi_e$. 

    \begin{claim}[Local Structure]\label{structure-claim}
        If $k \geq 6$ then there exist $t_{\text{even}}, t_{\text{odd}} \in \mathbb{Z}$ such that $t_{\text{even}}$ is even, $t_{\text{odd}}$ is odd, and for all $i \in \mathbb{Z}$ we have 
        \begin{equation}\notag
        \varphi_e(v_i) \neq \varphi_e(v_{i - 2}) \iff i \in \{t_{\text{even}}, t_{\text{odd}}\} + 2m\mathbb{Z}.
        \end{equation} 
    \end{claim}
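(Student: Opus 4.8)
The plan is to exploit Claim~\ref{local-periodicity-claim}, which tells us that every set of $n_{k,m}$ consecutive vertices in $S$ contains exactly $\frac{n_{k,m}}{k-1}$ vertices of each color, together with the clique structure used in its proof. Since $\varphi_e$ is $n_{k,m}$-periodic and $n_{k,m}=2m(k-1)$ (even $k$) or $n_{k,m}=m(k-1)$ (odd $k$), it suffices to analyze $\varphi_e$ on a single period, i.e.\ on the vertices $v_1,\ldots,v_{n_{k,m}}$, which all lie in $S$. The key structural input is that the distance set contains $\{1,3,\ldots,2m-1\}=D_1$, so that for each residue class $a\pmod{2m}$, the vertices $v_a,v_{a+2},\ldots$ within any window of $2m$ consecutive indices of the same parity form a clique (this is exactly the clique $V_j$ from the odd case, and is a sub-structure of the $V_l$'s in the even case). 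First I would record the consequence: within each block of $2m$ consecutive vertices of $S$, all vertices of a given parity receive \emph{distinct} colors, hence color repetitions along a fixed parity class happen exactly at multiples of $2m$ in index.

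Next I would count. Fix the even-index vertices in $v_1,\ldots,v_{n_{k,m}}$; there are $\frac{n_{k,m}}{2}$ of them. By the clique observation, consecutive ones (step $2$ in index) are forced to differ in color until we wrap around a block of length $2m$; more precisely, among the even-indexed vertices, the number of indices $i$ with $\varphi_e(v_i)\neq\varphi_e(v_{i-2})$ equals the number of "color changes" along this cyclic sequence of length $\frac{n_{k,m}}{2}$. On the other hand, by Claim~\ref{local-periodicity-claim}\ref{lp-first} applied to shifted windows, or directly from the clique partition, each color appears the same number of times among even-indexed vertices of a period. The crucial point is that the color sequence along even indices, read cyclically, must be a concatenation of runs, and the clique constraint forces these runs to have length exactly $m$: a run cannot be longer than $m$ (else two equal colors appear within a window of $2m$ consecutive indices, i.e.\ inside a clique $V_j$), and a counting/averaging argument using that each color appears exactly $m$ (odd $k$) or $2m$ (even $k$) times, combined with the fact that the number of distinct colors on even indices is $k-1$ and each contributes a bounded number of runs, pins every run to length exactly $m$. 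Consequently the set of "change points" among even indices is precisely an arithmetic progression with common difference $2m$; let $t_{\mathrm{even}}$ be any element of it (it is even since all indices are even). The identical argument applied to odd-indexed vertices yields $t_{\mathrm{odd}}$, odd, with change points forming $t_{\mathrm{odd}}+2m\mathbb{Z}$. Since a vertex $v_i$ and $v_{i-2}$ always have the same parity, $\varphi_e(v_i)\neq\varphi_e(v_{i-2})$ holds iff $i$ is a change point of its parity class, i.e.\ iff $i\in\{t_{\mathrm{even}},t_{\mathrm{odd}}\}+2m\mathbb{Z}$, which is the claim.

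The main obstacle is the "runs have length exactly $m$" step, and here the even case $k\ge 6$ is genuinely harder than the odd case: for odd $k$ the clean clique partition $V_1,\ldots,V_m$ of $H_i$ into $(k-1)$-cliques from the proof of Claim~\ref{local-periodicity-claim} directly forces the color pattern within each period to be a sequence of $m$-blocks, because each $V_j$ is a rainbow clique and consecutive $V_j$'s overlap in the relevant distances. For even $k$ one only has the weaker $H$-subgraph structure (independence number $2$), so a color may legitimately appear twice within a $V_l$, and one must argue more carefully — using the exact count of $2m$ occurrences per color together with the fact that the two appearances of a color within the $H$-gadget are forced to be at indices differing by roughly $(k-2)m$ or $(2k-4)m$ — to conclude that, modulo $2m$, each color still occupies a single contiguous block of $m$ even indices (and likewise a block of $m$ odd indices, possibly a different block, which is why $t_{\mathrm{even}}\neq t_{\mathrm{odd}}$ in general and why each appears with multiplicity two as in Table~\ref{table-even-coloring}). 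I would isolate this parity-block rigidity as the heart of the proof; once it is in hand, extracting $t_{\mathrm{even}}$ and $t_{\mathrm{odd}}$ and verifying the displayed equivalence is immediate from the $n_{k,m}$-periodicity of $\varphi_e$ recorded in Remark~\ref{local-periodicity-rem}.
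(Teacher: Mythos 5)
Your proposal contains a concrete and load-bearing error in its very first structural observation. You write that since $D_1=\{1,3,\ldots,2m-1\}$, the same-parity vertices $v_a,v_{a+2},\ldots,v_{a+2m-2}$ inside a window of $2m$ consecutive indices form a clique, and you conclude that ``all vertices of a given parity receive distinct colors'' within such a window. This is backwards. The pairwise distances among $v_a,v_{a+2},\ldots,v_{a+2m-2}$ are $2,4,\ldots,2m-2$, all \emph{even}, hence none of them lie in $D_1$ (which consists of odd numbers), and none lie in $D_2\cup D_3$ either; these vertices form an \emph{independent} set, not a clique. Indeed, in the model coloring $\varphi_J$ (Tables~\ref{table-odd-coloring} and~\ref{table-even-coloring}) all such vertices receive the \emph{same} color, which is exactly the structure that $T_c=\{t_c,t_c'\}+F+n_{k,m}\mathbb{Z}$ encodes. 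Your statement also contradicts the later part of your own argument, where you correctly aim to show that runs along a fixed parity class have length exactly $m$: if same-parity vertices within a $2m$-window were all distinctly colored, those runs would have length $1$, not $m$. The windows that do give genuine cliques are the sets $V_j$ from the odd case of Claim~\ref{local-periodicity-claim}, but those span indices across a range of order $(k-3)m$ and contain one pair of \emph{consecutive} (opposite-parity) vertices from each of $(k-1)/2$ blocks, not $m$ same-parity vertices in one block.

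Beyond that error, the crucial step ``counting/averaging pins every run to length exactly $m$'' is not established, and the naive count does not suffice. The only correct ingredient you invoke is that a run has length at most $m$ because $2m\in D_2$; but you never rule out that a single color could occupy two short runs within one period, nor that a color could appear on both parities inside one period (the short distances in $D_1$ only forbid opposite-parity repetitions at distance less than $2m$, leaving open repetitions at larger odd distances). Both of these possibilities are eliminated in the paper by first proving Claim~\ref{equiv-structure}: one fixes the minimal index $j$ of color $c$, uses the intervals in $D_2$ (and $D_3$ for even $k$) to confine all $m$ (resp.\ $2m$) occurrences of $c$ in the next period to a window of width under $4m$ (resp.\ two windows of width under $6m$), and then argues that a gap larger than $2m-2$ inside such a window would force a gap of more than $(k-3)m$, which does not fit --- this is precisely where the hypotheses $k\geq 7$ (odd) or $k\geq 10$ (even) enter, and why $k\in\{6,8\}$ require the separate ring-based argument of Appendix~\ref{sec:68}. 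Your proposal does not contain this confinement step and so does not close the gap; rebuilding the argument along the lines of Claim~\ref{equiv-structure} (and then deducing the present claim from it, as the paper does, via the observation that two same-parity starting points must be $\geq 2m$ apart) is the missing substance.
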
 
    As one notices, the claim does not address the case $k=5$. This is fine, as the claim will not be necessary in the later steps of the proof for $k=5$.
    
    In the following, we give a proof of Claim~\ref{structure-claim} for most cases, namely when $k\notin \{6,8\}$. The cases $k\in \{6,8\}$ are more technical to handle and, as previously explained, we defer the proof of Claim~\ref{structure-claim} in these cases to Appendix~\ref{sec:68}. In the following, we will first prove the slightly different claim below, and later show that it implies Claim~\ref{structure-claim} for $k\notin \{6,8\}$. Recall that $F=\{0,2,\ldots,2m-2\}$.
    \begin{claim}\label{equiv-structure}
        Let $k\ge 7, k\neq 8$. For every color $c$ there exist $t_c, t_c'$ (not necessarily distinct) such that for every $i\in \mathbb{Z}$, we have $\varphi_e(v_i) = c$ if and only if $i \in T_c :=\{t_c, t_c'\} + F + n_{k,m}\Z$. Moreover, $t_c=t_c'$ if $k$ is odd and $t_c\neq t_c'$ if $k$ is even.
    \end{claim}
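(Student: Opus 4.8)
The plan is to bootstrap from the periodicity and color-count information already established in Claim~\ref{local-periodicity-claim} together with the clique structure of $G$, working entirely with the periodic function $i \mapsto \varphi_e(v_i)$ on $\mathbb{Z}$. The key structural input is that, in $S$, certain index sets of the form $\{a, a+1\} + 2m\mathbb{Z}$ span cliques (this is exactly what was extracted in the proof of Case~1 and Case~2 of Claim~\ref{local-periodicity-claim}, and the analogous fact for even $k$ coming from the $H$-subgraph argument). First I would show that the two indices $i$ and $i+1$ always receive different expected colors when $k$ is odd, and more generally pin down, for each residue class mod $2m$, how colors can change; the interval $[2m,(k-3)m+1]\subseteq D_2$ (resp.\ the even analogue and $D_3$) forces that vertices whose indices differ by anything in this range must get distinct colors, which severely constrains how a color class can be distributed across the $k-1$ "rows" of length $2m$.

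The core of the argument is: fix a color $c$ and let $T_c = \{i \in \mathbb{Z} : \varphi_e(v_i)=c\}$; this set is $n_{k,m}$-periodic and, by Remark~\ref{local-periodicity-rem}, meets every window of $n_{k,m}$ consecutive integers in exactly $\frac{n_{k,m}}{k-1}$ points (which equals $m$ when $k$ is odd and $2m$ when $k$ is even). I would first prove that $T_c$ is a union of "blocks" $\{a, a+2, \ldots, a+2m-2\} = a+F$: this follows because within any single row of $2m$ consecutive indices, the odd distances $1,3,\ldots,2m-1$ are all in $D_1$, so a color occupies indices of a single parity within a row, and since it occupies $m$ such indices (odd $k$) it must be a full block $a+F$; for even $k$ one argues it occupies two rows, each a full block, and the constraint from $D_2\cup D_3$ forces those two rows to be "as far apart as possible," i.e.\ at distance exactly $\tfrac{n_{k,m}}{2}$ along the row-ordering, giving $t_c \ne t_c'$ with $t_c' = t_c + (\text{half-period offset})$ after translating back to index form — this is where the hypothesis that $q$ (and hence the period structure) behaves well is used. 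Then I would show there are no two distinct blocks of the same color within a single period window other than these: if $a+F$ and $a'+F$ were both color $c$ with $a \not\equiv a' \pmod{2m}$ in the odd case, pick representatives at distance lying in $[2m,(k-3)m+1] = D_2$, contradicting properness; this forces $t_c = t_c'$ for odd $k$. Combining, $T_c = \{t_c, t_c'\} + F + n_{k,m}\mathbb{Z}$ with the stated parity/equality conditions.

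The main obstacle I anticipate is the even-$k$ case: ruling out all "intermediate" placements of the two blocks of a color and showing the two rows must be exactly antipodal requires carefully tracking which shifted intervals $[2m,(k-4)m+2] + q'n_{k,m}$ and $[(k+2)m-1,(2k-4)m+1]+q'n_{k,m}$ lie in $D$, and checking that every non-antipodal pair of rows has some inter-row distance landing in $D_2 \cup D_3$ while the antipodal pair does not. This is precisely the delicate casework that the authors flag as being especially painful for $k \in \{6,8\}$ (where the intervals are short or degenerate), which is why those two values are excluded here and deferred to Appendix~\ref{sec:68}; for $k \ge 7$, $k \ne 8$, the intervals are long enough that a uniform counting/distance argument goes through. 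A secondary technical point is the bookkeeping to translate the "row-and-column" description back into the clean statement $i \in \{t_c,t_c'\}+F+n_{k,m}\mathbb{Z}$, and to verify the global consistency of the offsets $t_c$ across all $k-1$ colors (that together the $T_c$ partition $\mathbb{Z}$), but this is routine once the per-color structure is in hand.
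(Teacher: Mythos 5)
Your high-level strategy — use the odd distances in $D_1$ to enforce a single parity on each $2m$-window, then use $D_2$ (and, for even $k$, $D_3$) together with the counting from Claim~\ref{local-periodicity-claim} to force full blocks $a+F$ — is indeed the same flavor as the paper's argument. But there is a genuine gap in how you get from the parity constraint to full blocks, and a smaller error in your even-$k$ description.

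The gap: you assert that ``since it occupies $m$ such indices (odd $k$) it must be a full block $a+F$.'' Claim~\ref{local-periodicity-claim} gives you $m$ occurrences of $c$ per $n_{k,m}$ consecutive indices, not $m$ per row of length $2m$. The parity constraint from $D_1$ alone does not stop the $m$ occurrences from being spread across several rows as partial blocks (e.g.\ a few even indices in one row and a few odd indices in a far-away row). To rule this out you need to combine the $D_2$ constraint with a careful choice of window: you cannot just take a fixed window $[0,n_{k,m}-1]$, because two occurrences near opposite ends are not ruled out by the linear distance condition even though they are ``close'' in the cyclic sense mod $n_{k,m}$, and an ad hoc ``pick representatives'' argument runs into a transitivity problem (two clusters each internally close but mutually far). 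The paper's proof sidesteps this cleanly: it takes $j$ minimal with $\phi_e(v_j)=c$, considers the window $H_{j+2m}$, and then the $D_2$ constraint from $v_j$ (including the shifted copy at $+n_{k,m}$) pins every occurrence $v_\ell$ in that window into the short interval $B=j+[n_{k,m}-2m+2,\,n_{k,m}+2m-1]$ of length $4m-2$; from there, $k\ge 7$ makes $(k-3)m>4m-2$, so a second jump inside $B$ is impossible and all occurrences are in $t_c+F$. This anchoring at the minimal $j$ and use of the offset window is the key idea your sketch is missing; without it the ``block'' step does not go through.

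A secondary issue: for even $k$ you claim the two blocks are forced to be ``exactly antipodal'' at distance $n_{k,m}/2$. That is neither true nor needed. Already in the model coloring $\phi_J$ (Table~\ref{table-even-coloring}), color $1$ starts at $1$ and again at $(k-2)m+2$, whose difference is $(k-2)m+1\neq (k-1)m=n_{k,m}/2$. The statement of Claim~\ref{equiv-structure} only requires two starting points $t_c\ne t_c'$; the paper locates them as the minimal occurrences in the two intervals $C_1=[(k-4)m+3,(k+2)m-2]$ and $C_2=[(2k-4)m+2,2km-1]$ (relative to $j$), each of which can host only one block because any two same-colored indices at distance under $6m$ are forced by $D_1\cup D_2$ to differ by an element of $F$. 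No exact antipodality plays a role, so pursuing it would be extra work that in fact cannot succeed.
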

        
    \begin{claimproof}[Proof of Claim~\ref{equiv-structure}]
        The proof has two cases depending on the parity of $k$.

        \medskip
        
        \paragraph*{\textbf{Case~1.} $k$ is odd and $k \geq 7$.} By Claim~\ref{local-periodicity-claim} we know that color $c$ appears $m$ times in $\{v_1,\ldots,v_{n_{k,m}}\}$. Let $j\in [1,n_{k,m}]$ be minimal such that $\phi_e(v_j)=\phi(v_j)=c$. 
        Again by Claim~\ref{local-periodicity-claim} there are exactly $m$ vertices $v_l$ of color $c$ in $H_{j+2m}$, i.e.,  such that $l - j \in [2m, n_{k, m} + 2m-1]$. Now, consider any such vertex $v_l$. Using edges from $v_j$ of distances in $D_2$ we can conclude $l-j \in [n_{k, m} - 2m + 2, n_{k, m} + 2m-1]$, and thus $l\in B:=j+[n_{k, m} - 2m + 2, n_{k, m} + 2m-1]$, a range of $4m - 2$ integers. Summarizing, we have established that all $m$ vertices of color (equivalently, expected color) $c$ in $H_{j+2m+1}$ are contained in $\{v_l|l\in B\}\subseteq S$. 
        
        Let $t_c \in B$ be minimal such that $v_{t_c}$ has color $c$. If the remaining $m-1$ vertices of color $c$ with indices in $B$ are $v_{t_c+2}, v_{t_c+4}, \dots, v_{t_c+2m - 2}$, then by periodicity, for all $i\in \mathbb{Z}$ we have $\varphi_e(v_i) = c$ if and only if $i \in t_c + F + n_{k,m}\mathbb{Z}$. 
        
        Otherwise, there exists $d > 2m - 2$ such that $v_{t_c + d} \in B$ and $\varphi_e(v_{t_c + d})=\phi(v_{t_c + d}) = c$. By definition of $D_2$ we have $d > (k - 3)m$. As $k \geq 7$, we conclude $d > 4m$ and therefore $v_{t_c + d} \notin B$, contradicting the definition of $d$. 
        
        It follows that indeed for every color $c\in [1,k-1]$ there exists some $t_c\in \mathbb{Z}$ such that for every $i\in \mathbb{Z}$, we have that $v_i$ has expected color $c$ if and only if $i \in t_c + F + n_{k,m}\mathbb{Z}$. This concludes the proof by setting $t_c':=t_c$.

        \medskip

        \paragraph*{\textbf{Case~2.} $k$ is even and $k \geq 10$.} 
        As in the first case, let $j \in [1, n_{k,m}]$ be minimal such that and $\varphi_e(v_j)=\phi(v_j)=c$. By Claim~\ref{local-periodicity-claim} there are exactly $2m$ vertices of (expected) color $c$ in $H_{j + 2m}$. 
        Using the edges from $v_j$ with distances in $D$ we can conclude that each such vertex $v_l$ must satisfy
        \begin{equation}\notag
            l - j \in [(k-4)m + 3, (k+2)m - 2] \cup [(2k - 4)m + 2, 2km - 1],
        \end{equation}
        a union of two sets of fewer than $6m$ consecutive vertices. 

        As $6 \leq k - 4$, any pair of vertices $v_x, v_y$ of the same expected color with $d_N(x, y) < 6m$ satisfies, due to the distances in $D_1 \cup D_2$, that $d_N(x, y) \in F$. In particular, any set of $6m$ consecutive vertices can contain at most $m$ vertices of the same expected color. In particular, this is true for $C_1 := [(k-4)m + 3, (k+2)m - 2]$ and $C_2 := [(2k - 4)m + 2, 2km - 1]$.

        As by the above there are in total exactly $2m$ vertices of (expected) color $c$ in $C_1\cup C_2$, this implies that each of $C_1$ and $C_2$ contain exactly $m$ indices of vertices of expected color $c$. Let $t_c \in C_1$ and $t_c' \in C_2$ be minimal in their sets such that $\varphi_e(t_c) = c$ and $\varphi_e(t_c') = c$. By the above, for every vertex $v_l$ of (expected) color $c$ with $l\in C_1$ we then have $d_N(t_c,l)\in F$ and thus $l\in t_c+F$, and similarly for every vertex $v_l$ of (expected) color $c$ with $l\in C_2$ we have $d_N(t_{c}',l)\in F$, meaning $l\in t_c'$. 

        Altogether, this establishes that every vertex $v_l\in H_{j+2m}$ of (expected) color $c$ satisfies $l\in \{t_c,t_c'\}+F$.
        
        Since the set $\{t_c,t_c'\}+F$ has size $2m$ and there are exactly $2m$ vertices of color $c$ in $H_{j+2m}$, it follows that in fact, for every $v_i\in H_{j+2m}$ we have $\phi_e(v_i)=c$ \emph{if and only if} $i\in \{t_c,t_c'\}+F$. By periodicity of $\varphi_e$ we can generalize this to: For all $i \in \mathbb{Z}$ we have $\varphi_e(v_i) = c$ if and only if $i \in \{t_c, t_c'\} + F + n_{k,m}\mathbb{Z}$.

        This concludes the proof of Claim~\ref{equiv-structure} when $k$ is even.
    \end{claimproof} 

    Next, we will use Claim~\ref{equiv-structure} to prove Claim~\ref{structure-claim}.

    \begin{claimproof}[Proof of Claim~\ref{structure-claim}]        
        We always have $\varphi_e(v_j) \neq \varphi_e(v_{j + 2m})$ because $2m \in D_2$. In particular, this tells us that for every $j \in \mathbb{Z}$ there exists a $t\in j + 2 + F$ such that $\varphi_e(v_t) \neq \varphi_e(v_{t - 2})$. Applying this with $j=-2$ and $j=-1$ respectively, we find that that there exist numbers $t_{\text{even}}\in F$ and $t_{\text{odd}}\in F+1$ such that $\varphi_e(v_{t_{\text{even}}})\neq \varphi_e(v_{t_{\text{even}}-2})$ and $\varphi_e(v_{t_{\text{odd}}})\neq \varphi_e(v_{t_{\text{odd}}-2})$.
        
        Consider any distinct numbers $i, j \in \mathbb{Z}$ of the same parity such that $\varphi_e(v_j) \neq \varphi_e(v_{j - 2})$, and $\varphi_e(v_i) \neq \varphi_e(v_{i - 2})$. We now claim that for every such pair of numbers we must have $|i - j| \geq 2m$. To see this, w.l.o.g. assume $i < j$ and towards a contradiction, suppose that $j - i < 2m$. Set $c:=\varphi_e(v_i)$. Since $\varphi_e(v_{i - 2})\neq \varphi_e(v_i)=c$, by Claim~\ref{equiv-structure} we have $i\in T_c=\{t_c,t_c'\}+F+n_{k,m}\mathbb{Z}$ and $i-2\notin T_c=\{t_c,t_c'\}+F+n_{k,m}\mathbb{Z}$. Using the definition of $F$, this implies $i \in \{t_c, t_c'\} + n_{k,m}\mathbb{Z}$. Since we have $j-i, (j-2)-i\in \{0,2,\ldots,2m-2\}=F$ by assumption, this implies that $j, j-2 \in T_c$, so by Claim~\ref{equiv-structure} $j$ and $j - 2$ have the same expected color. This contradicts our assumption on $j$, and so we indeed have $|i - j| \geq 2m$, as claimed.

        Now let $j \in 2\mathbb{Z}$ be minimal such that $j > t_{\text{even}}$ and $\varphi_e(v_j) \neq \varphi_e(v_{j - 2})$. Our previous observations imply that $j - t_{\text{even}} \geq 2m$ and $\varphi_e(v_{t_{\text{even}} + 2m}) \neq \varphi_e(v_{t_{\text{even}}})$. Therefore, we can see that $j = t_{\text{even}} + 2m$. Repeating this argument inductively and using periodicity we obtain that for every $i \in 2\mathbb{Z}$ we have $\varphi_e(v_i) \neq \varphi_e(v_{i - 2})$ if and only if $i \equiv t_{\text{even}} \pmod{2m}$. 

        By the same argument, we get that for every $i \in \mathbb{Z}$ which is odd we have $\varphi_e(v_i) \neq \varphi_e(v_{i - 2})$ if and only if $i \equiv t_{\text{odd}} \pmod{2m}$. 

        In conclusion, it follows that for all $i \in \mathbb{Z}$ we have $\varphi_e(v_i) \neq \varphi_e(v_{i - 2})$ if and only if $i \in \{t_{\text{even}}, t_{\text{odd}}\} + 2m\mathbb{Z}$, concluding the proof of the claim. 
    \end{claimproof}

    Next, we will show that the unaffected vertices in $S$ already exclude all but two color options for every vertex in $S'$. We will use that while some edges from $G$ might be missing in $G'$, none of the edges with at least one endpoint in $S$ are missing. In particular, all edges between $S$ and $S'$ are present.

    \begin{claim}[Two Color Options]\label{two-colors-claim}
        Let $i \in \left[1, \frac{N-1}{2}\right]$. Then $v_i$ satisfies $$\varphi(v_i) \in \{\varphi_e(v_i), \varphi_e(v_{i - 1})\}.$$
    \end{claim}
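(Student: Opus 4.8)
The plan is to exploit the fact that the distance set $D$ is rich enough so that, from the (unaffected) vertices of $S$, a single vertex $v_i$ with $i\in[1,\frac{N-1}{2}]$ already ``sees'' a large clique of $n_{k,m}$ vertices in $S$ whose colors under $\varphi$ form a complete set $[1,k-1]$ minus exactly two colors, with those two colors being precisely $\varphi_e(v_i)$ and $\varphi_e(v_{i-1})$. First I would fix $i$ and, using the local periodicity established in Claim~\ref{local-periodicity-claim} together with the structural description of $\varphi_e$ (Claim~\ref{structure-claim}, or the analogue in the $k=5$ case which can be read off directly from Table~\ref{table-odd-coloring}), identify a concrete set $W\subseteq\{v_1,\dots,v_{3n_{k,m}}\}$ of consecutive-ish vertices of $S$ lying at distances in $D$ from $v_i$. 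The natural candidate is a translate by $v_i$ of the index set that realizes ``all colors except the two blocks containing $i-1$ and $i$'' in the periodic pattern; concretely one takes, for each color $c\notin\{\varphi_e(v_i),\varphi_e(v_{i-1})\}$, one representative vertex of color $c$ in $S$ whose index differs from $i$ by an element of $D$.

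The key steps, in order, are: (1) record that every edge incident to a vertex of $S$ is present in $G'$ (since vertices of $S$ are unaffected), so any clique we locate among $S\cup\{v_i\}$ that uses only such edges is a genuine clique of $G'$; (2) using the explicit intervals $D_1,D_2,D_3$ and the bound $i\le\frac{N-1}{2}$ (which guarantees cyclic distances equal absolute differences, as noted before the claim), show that for every color $c\in[1,k-1]\setminus\{\varphi_e(v_i),\varphi_e(v_{i-1})\}$ there is a vertex $v_\ell\in S$ with $\varphi_e(v_\ell)=c$ and $|\ell-i|\in D$ --- here one uses the block structure $T_c=\{t_c,t_c'\}+F+n_{k,m}\Z$ to slide the representative within its length-$m$ block until its distance from $i$ lands in the appropriate sub-interval of $D_2$ or $D_3$; (3) conclude that $\{v_i\}$ together with these $k-3$ vertices forms a clique of size $k-2$ in $G'$ on which $\varphi$ is injective, hence $\varphi(v_i)$ avoids all $k-3$ colors $c$, i.e.\ $\varphi(v_i)\in\{\varphi_e(v_i),\varphi_e(v_{i-1})\}$; and finally (4) check separately the small-$k$ and parity cases ($k=5$, and $k$ even where $D_3$ enters) where the interval arithmetic is slightly different but the same representative-sliding argument applies.

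I expect step (2) --- verifying that every ``missing'' color has a representative in $S$ at a distance belonging to $D$, uniformly over all $i$ up to $\frac{N-1}{2}$ --- to be the main obstacle, since it requires carefully matching the length-$m$ flexibility of each color block against the gaps in $D$ (the non-distances $[0,2m-1]$ near $0$, the gap between $D_2$ and $D_3$ for even $k$, and the behavior near $\frac{N-1}{2}$), and one must be sure the two colors that are \emph{not} necessarily avoidable are exactly $\varphi_e(v_i)$ and $\varphi_e(v_{i-1})$ and no others. The boundedness $|R|\le r$ plays no role here beyond ensuring $S$ consists of unaffected vertices; the real content is purely the combinatorics of the circulant distance set, so I would organize the argument around the explicit description of $D$ and the block structure of the expected coloring rather than around $\varphi$ itself.
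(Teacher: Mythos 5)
Your overall strategy agrees with the paper's: show that $v_i$ has, among the (unaffected) vertices of $S$, neighbors of every color except possibly $\varphi_e(v_i)$ and $\varphi_e(v_{i-1})$, and then conclude $\varphi(v_i)$ is restricted to those two colors. For odd $k$ this is fine and essentially what the paper does, except the paper packages the $k-3$ representatives inside a $(k-1)$-clique $V_J=\{v_{i_0-d},v_{i_0-1-d}\mid d\in J\}\subseteq S$; that detour buys something you lose with individual representatives: there is no need to invoke Claim~\ref{structure-claim} or Claim~\ref{equiv-structure} at all in the odd case (the clique is rainbow for free, and its elements other than $v_{i_0},v_{i_0-1}$ are at distance in $D_2$ from $v_i$). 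This matters because Claim~\ref{structure-claim} is only stated for $k\ge 6$, so your plan to use it (or ``the analogue read off from Table~\ref{table-odd-coloring}'') in the case $k=5$ is not available as written, whereas the paper's clique argument covers $k=5$ directly.

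The genuine gap is in your step (2) for even $k$. There you propose to ``slide the representative within its length-$m$ block until its distance from $i$ lands in the appropriate sub-interval of $D_2$ or $D_3$.'' This cannot work as stated for small even $k$. For $k=6$ the set of indices $j$ with $j-i_0$ in the $D_2$- or $D_3$-visible range has total size only $6$ (the intervals $[2m,(k-4)m+2]$ and $[(k+2)m-1,(2k-4)m+1]$ collapse to $[2m,2m+2]$ and $[8m-1,8m+1]$), while a color block $t_c+F$ occupies $m$ indices spanning a range of $2m-2$. A block can therefore easily lie entirely outside the visible range without that color being one of the two exceptional colors; sliding inside the block does not rescue this. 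The paper instead argues by contradiction from the hypothesis that \emph{all} $2m$ vertices of color $c$ in a window $H_{i_0-(k-4)m-2}$ miss the visible range, identifies the two block-starting-points $t_1<t_2$, shows $t_2-t_1\ge 4m$ via a parity split (using Claim~\ref{structure-claim} for the even case and distances in $D_1\cup D_2$ for the odd case), deduces $t_1\in T_1:=i_0+[-2m+1,0]$ by the lengths of the two candidate intervals, and only then invokes Claim~\ref{structure-claim} to pin $t_1$ to $\{s_{\text{even}},s_{\text{odd}}\}$, i.e.\ to force $c\in\{\varphi_e(v_i),\varphi_e(v_{i-1})\}$. Your proposal names the right ingredients (the block structure and Claim~\ref{structure-claim}) but does not supply this chain of deductions, and the ``sliding'' picture would mislead you into thinking the even case is a routine interval-matching exercise when it actually requires the $4m$ separation and the mod-$2m$ rigidity of the starting points.
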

    \begin{claimproof}
        If $i \leq 3n_{k,m}$ then $v_i \in S$ and $\varphi(v_i) = \varphi_e(v_i)$ by definition of $\varphi_e$. If $i > 3n_{k,m}$ we will prove that $v_i$ has neighbors in $S$ of every color except possibly $\varphi_e(v_i)$ and $\varphi_e(v_{i-1})$, implying that $\phi(v_i)\in \{\phi_e(v_{i-1}),\phi_e(v_i)\}$, as desired. 

        \medskip

        \paragraph*{\textbf{Case~1.} $k$ is odd.} 

        Let $i_0 \in [n_{k,m} + 1, 2n_{k, m}]$ be such that $i \equiv i_0 \pmod{n_{k,m}}$ and let
        $$J = \{0, 2m, 4m, \dots, (k - 3)m\}.$$ We recall that the vertices in $$V_J := \left\{\, v_{i_0 - d}, v_{i_0 - 1 - d} \mid d \in J \,\right\} \subseteq S$$ form a clique in $G$ (and thus in $G'$): The pairwise distances are $1, 2m - 1 \in D_1$ or in $[2m, (k - 3)m + 1] \subseteq D_2$. The clique $V_J$ has $k - 1$ unaffected vertices, so every color appears in it.
        All vertices $v_j \in V_J\setminus\{v_{i_0}, v_{i_0 - 1}\}$ satisfy 
        \begin{equation}\label{eq-remainder}
            i_0 - j \in [2m, (k - 3)m + 1].
        \end{equation}
        Using that $i \leq \frac{N-1}{2}$ and $i - i_0 \in n_{k,m}\{1,2\ldots,\frac{q}{2}-1\}$, (\ref{eq-remainder}) tells us that $d_N(j, i)=i-j=(i_0-j)+(i-i_0) \in D_2$.
        Therefore, $v_i$ has neighbors in $S$ of all colors except possibly $\varphi_e(v_{i_0})$ and $\varphi_e(v_{i_0 - 1})$, as desired. This concludes the proof in the first case. We note that in this case, we did not use Claim~\ref{structure-claim}, so it is indeed valid for all odd $k$, including $k=5$.

        \medskip

        \paragraph*{\textbf{Case~2.} $k$ is even.}

        Let $i_0 \in [n_{k, m} + 1, 2n_{k, m}]$ be such that $i_0 \equiv i  \pmod{n_{k, m}}$. Let $q' :=\frac{i - i_0}{n_{k, m}} \in \mathbb{Z}$ and note that $q' < \frac{q}{2}$ since $i\le \frac{N-1}{2}$. Using edges of distances \begin{equation}\notag
            q'n_{k, m} + [2m, (k - 4)m + 2] \subseteq D_2
        \end{equation} 
        and 
        \begin{equation}\notag
            (q'-1)n_{k, m} + [(k + 2)m - 1, (2k - 4)m + 1] \subseteq D_3
        \end{equation}
        we can see that $v_{i}$ is adjacent in $G$ to all vertices $v_j$ satisfying:
        \begin{equation}\label{forbidden-difs-eqn}
            j - i_0 \in [- (k-4)m - 2, - 2m] \cup [2m - 1, (k-4)m + 1].
        \end{equation}
Since these vertices are all contained in $S$, it follows that $v_i$ is also in $G'$ adjacent to all these vertices.
        
        Let us now consider any color $c\in [1,k-1]$ such that none of the $2m$ vertices of color $c$ in $H_{i_0 - (k - 4)m - 2}$ are adjacent to $v_i$. Then by (\ref{forbidden-difs-eqn}) all of those $2m$ vertices must be of the form $v_j$ where 
        \begin{equation}\notag
            j - i_0 \in [- 2m + 1, 2m - 2] \cup [(k-4)m + 2, (k + 2)m - 3].
        \end{equation}
        We recall that $F=\{0, 2, 4, \dots, 2m -2\}$. By Claim~\ref{equiv-structure} there are $t_1, t_2 \in [i_0 - (k - 4)m - 2, i_0 + (k + 2)m - 3]$ such that $t_1 < t_2$ and for every $l\in \mathbb{Z}$ we have $\varphi_e(v_l) = c$ if and only if $l \in \{t_1, t_2\} + F + n_{k,m}\mathbb{Z}$. 
        As no vertex with index in $t_1 + F$ or $t_2 + F$ should be a neighbor of $v_i$, we get that 
        \begin{equation}\notag
            t_1, t_2 \in i_0 + ([-2m + 1, 0] \cup [(k-4)m + 2, km - 1]). 
        \end{equation}
        Let $T_1 = i_0 + [-2m + 1, 0]$ and $T_2 = i_0 + [(k-4)m + 2, km - 1]$. We note that $|T_1| = 2m$ and $|T_2| = 4m - 2$. 

        Next, we claim that the difference $t_2 - t_1$ must be at least $4m$. 
        
        To see this, consider first the case that $t_2 - t_1$ is even. Note that we have $\phi_e(v_l)=\phi_e(v_{l-2})$ for every $l\in \mathbb{Z}$ such that $l\in \{t_1,t_2\}+\{2,4\ldots,2m-2\}+n_{k,m}\mathbb{Z}$. Using this and Claim~\ref{structure-claim}, we may conclude $t_1 \equiv t_2 \pmod{2m}$. Note further that we have $t_2-t_1\neq 2m$ since $2m\in D_2$ and $v_{t_1}, v_{t_2}\in S$ satisfy $\phi(v_{t_1})=\phi_e(v_{t_1})=\phi_e(v_{t_2})=\phi(v_{t_2})=c$. Finally, this implies $t_2- t_1 \geq 4m$, as desired. 
        
        Next let us look at the case when $t_2 - t_1$ is odd. We consider the element $t_1 + 2m - 2 \in t_1 + F$. Every $t \in [t_1, t_1 + 4m -1]$ where $t - t_1$ is odd satisfies 
        \begin{equation}\notag
            |t - (t_1 + 2m - 2)| \in  \{1, 3, \dots, 2m + 1\} \subseteq D_1 \cup D_2.
        \end{equation}   
        Thus, $t_2\notin [t_1,t_1+4m-1]$ and so $t_2 - t_1 \geq 4m$, as desired.

        In both cases we conclude that $t_1\in T_1$ and $t_2\in T_2$, as neither of the intervals $T_1$, $T_2$ is long enough to host two elements with distance at least $4m$.

        Let $s_{\text{even}}, s_{\text{odd}} \in T_1$ be the unique integers satisfying $s_{\text{even}} \equiv t_{\text{even}} \pmod{2m}$ and $s_{\text{odd}} \equiv t_{\text{odd}} \pmod{2m}$. They are unique because $T_1$ consists of $2m$ consecutive integers. From Claim~\ref{structure-claim} we get 
        \begin{equation}\notag
            t_1 \in \{s_{\text{even}}, s_{\text{odd}}\}.
        \end{equation}
        Recall that $\phi_e(t_1)=c$, and so it follows that $c\in \{\phi_e(s_{\text{even}}),\phi_e(s_{\text{odd}})\}$. 
        
        Summarizing, up until this point, we have shown that every color $c\in [1,k-1]$ that in the coloring $\phi$ does not appear on a neighbor of $v_i$ in $G'$, must satisfy $c\in \{\phi_e(s_{\text{even}}),\phi_e(s_{\text{odd}})\}$. This implies that $\phi(v_i)\in \{\phi_e(s_{\text{even}}),\phi_e(s_{\text{odd}})\}$.

        To conclude the proof, we will now show that $\{\varphi_e(s_{\text{even}}), \varphi_e(s_{\text{odd}})\} = \{\varphi_e(v_{i}), \varphi_e(v_{i - 1})\}$. 

        We start by noting that, as $s_{\text{even}}, s_{\text{odd}} \in T_1 = [i_0 - 2m + 2, i_0]$ and they have different parities, one of them will be in 
        \begin{equation}\notag
            \{i_0 - 2m + 1, i_0 - 2m +3, \dots, i_0 - 1\} = (i_0 - 1) - F,
        \end{equation}
        while the other will be in 
        \begin{equation}\notag
            \{i_0 - 2m + 2, i_0 - 2m + 4, \dots, i_0\} = i_0 - F.
        \end{equation}

        Further, by Claim~\ref{structure-claim} we have \begin{equation}\notag
\phi_e(v_{s_{\text{even}}})=\phi_e(v_{s_{\text{even}}+2})=\cdots=\phi_e(v_{s_{\text{even}}+2m-2})\end{equation} and \begin{equation}\notag
\phi_e(v_{s_{\text{odd}}})=\phi_e(v_{s_{\text{odd}}+2})=\cdots=\phi_e(v_{s_{\text{odd}}+2m-2}).\end{equation}
        
        Thus, we can conclude that $\{\varphi_e(s_{\text{even}}), \varphi_e(s_{\text{odd}})\} = \{\varphi_e(v_{i_0}), \varphi_e(v_{i_0 - 1})\}$. Since the assignment $x\rightarrow \phi_e(v_x)$ is periodic with period $n_{k,m}$ by definition and $i\equiv i_0 \pmod{n_{k,m}}$, we in particular have $\{\varphi_e(s_{\text{even}}), \varphi_e(s_{\text{odd}})\}=\{\varphi_e(v_{i}), \varphi_e(v_{i - 1})\}$, as desired. This concludes the proof of the claim also in the second case when $k$ is even.
    \end{claimproof}

    The next claim uses the distances in $D_1$ to show that if enough vertices in $G'$ of the same color under $\phi$ are close to each other, then their indices must have the same parity. To simplify the writing, in the following we will say that two vertices $v_i$ and $v_j$ have the same parity if $i$ and $j$ have the same parity. We will also refer to a vertex $v_i$ as \emph{odd} if $i$ is odd and as \emph{even} if $i$ is even.

    \begin{claim}\label{2m-claim}
        Let $\overline{S}$ be a subset of $2m$ consecutive vertices in $G'$ such that for all $v_i \in \overline{S}$ we have $i \leq \frac{N-1}{2}$ and let $c\in [1,k-1]$ be any color. If more than $2r$ vertices of $\overline{S}$ have color $c$ under $\phi$, then all vertices of color $c$ in $\overline{S}$ have the same parity.
    \end{claim}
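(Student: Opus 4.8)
The plan is to exploit the fact that $D_1 = \{1, 3, \dots, 2m-1\}$ contains \emph{every} odd value in $[1, 2m-1]$, so that within a window of $2m$ consecutive vertices, any two vertices of opposite parity are joined by an edge of $G$. Since only $r$ edges were deleted to pass from $G$ to $G'$, a color class of $\varphi$ that lives inside such a window and contains both an even and an odd vertex would force many deleted edges, and quantifying this immediately yields the claim.

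Concretely, I would first record the elementary observation underlying the argument: for $v_i, v_j \in \overline{S}$ the hypothesis that all indices of $\overline{S}$ are at most $\frac{N-1}{2}$, together with the inequality $2m-1 < \frac{N}{2}$ (which follows from the assumptions on the parameters, exactly as already used in the proof to identify cyclic distances with ordinary differences), gives $d_N(i,j) = |i-j| \le 2m-1$. Hence if $i$ and $j$ have opposite parity, then $d_N(i,j)$ is an odd integer in $[1, 2m-1]$, so $d_N(i,j) \in D_1 \subseteq D_{k,m,q}$ and therefore $v_iv_j \in E(G)$.

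Next I would argue by contradiction. Suppose $\overline{S}$ contains more than $2r$ vertices of color $c$ under $\varphi$, but these do not all have the same parity. Let $E_c$ and $O_c$ denote the sets of even, respectively odd, vertices of color $c$ in $\overline{S}$; by assumption both are non-empty and $|E_c| + |O_c| > 2r$. For every $v_i \in E_c$ and every $v_j \in O_c$ we have $v_iv_j \in E(G)$ by the previous paragraph, while $\varphi(v_i) = \varphi(v_j) = c$ and $\varphi$ is a proper coloring of $G'$; hence the edge $v_iv_j$ cannot be present in $G'$, i.e.\ $v_iv_j \in R$. These $|E_c|\cdot|O_c|$ edges are pairwise distinct, so $|E_c|\cdot|O_c| \le |R| \le r$. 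On the other hand, since $|E_c|, |O_c| \ge 1$ we have $|E_c|\cdot|O_c| \ge \frac{1}{2}\left(|E_c| + |O_c|\right) > r$, a contradiction. Consequently all vertices of color $c$ in $\overline{S}$ share a common parity, as claimed.

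I do not expect a real obstacle here; the argument is a short double-counting of deleted edges. The only points requiring a moment's care are the reduction of cyclic distance to ordinary distance inside $\overline{S}$ (already guaranteed by the standing assumption that all vertices under consideration lie in $\{v_1, \dots, v_{(N-1)/2}\}$) and the elementary inequality $ab \ge \frac{1}{2}(a+b)$ for integers $a,b \ge 1$, which drives the final contradiction.
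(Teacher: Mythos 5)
Your proof is correct and rests on the same key observation as the paper's: since $D_1 = \{1,3,\dots,2m-1\}$, any two vertices of opposite parity in a window of $2m$ consecutive vertices are adjacent in $G$, so monochromatic opposite-parity pairs force deleted edges. The paper packages the final count slightly differently (taking WLOG $\ge r+1$ even vertices of color $c$ and arguing that no odd vertex can then have color $c$, since at most $r$ of its $\ge r+1$ edges to those even vertices can have been deleted), whereas you double-count all $|E_c|\cdot|O_c|$ deleted edges and invoke $ab\ge \tfrac12(a+b)$ for integers $a,b\ge 1$; both are short and correct, and this is essentially the same approach.
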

    \begin{claimproof}
        Assume that there are more than $2r$ vertices of color $c$ in $\overline{S}$. Without loss of generality, at least $r + 1$ of the $c$-colored vertices in $\overline{S}$ are even (if the majority of vertices is odd the argument proceeds analogously). All distances within $\overline{S}$ are smaller than $2m$ and the distances between vertices of different parities are odd. By definition of $D_1$ the odd vertices in $\overline{S}$ are adjacent to all even vertices in $\overline{S}$ in $G$. In particular, every odd vertex in $\overline{S}$ must be connected to all the (at least $r+1$) even vertices of color $c$ in $\overline{S}$ in $G$. This means that every odd vertex in $\overline{S}$ must have at least one neighbor of color $c$ in $G'$ as well and cannot have color $c$ itself. So all vertices in $\overline{S}$ of color $c$ are even and thus have the same parity.
    \end{claimproof}
        
    With all the previously established claims at hand, we are now finally ready to complete the proof Lemma~\ref{partial-periodicity-lemma}. 
    Recall that Lemma~\ref{partial-periodicity-lemma} states that $\varphi$ is periodic on all of $S'$. As $\varphi$ is periodic on $S$ and thus $\varphi_e$ is periodic on $S'$, it suffices to show that for all $v_j \in S'$ we have $\varphi(v_j) = \varphi_e(v_j)$.
    We will consider the vertex of smallest index which is not colored in its expected color. Applying the different claims to the vertices close to it, we will obtain a contradiction. 

    Towards a contradiction, assume there is a vertex in $S'=\left\{v_1,\ldots,v_{\frac{N-1}{4}}\right\}$ which does not have its expected color. Let $i\in \left[0,\frac{N-1}{4}-1\right]$ be minimal such that $v_{i+1} \in S'$ and $\varphi(v_{i+1}) \neq \varphi_e(v_{i+1})$. Since $\phi$ and $\phi_e$ agree on $S'$, this index $i$ is well-defined and we have $i\ge 3n_{k,m}$. Applying Claim~\ref{two-colors-claim} to $v_{i+1}$ and using the minimality of $i$ we conclude that $\varphi(v_{i+1}) = \varphi_e(v_i) = \varphi(v_i) =: c$.

    The main idea is to find approximately $m$ or $2m$ consecutive vertices which can only have color $c$ or one other color. We can then apply Claim~\ref{2m-claim} to $c$ and then the other color to find a contradiction.
    
    \medskip

    \paragraph*{\textbf{Case~1.} $k \geq 6$.}

    Without loss of generality, assume that $i$ is even. Let $t_0$ be the unique integer satisfying $t_0 \equiv t_{\text{even}} \pmod{2m}$ and $t_0 \in [i - 2m + 1, i]$.  Intuitively, $t_0$ is the "starting point" of color $c$ before (or at) $i$. Similarly, let $t_1$ be the smallest integer satisfying $t_1 \equiv t_{\text{odd}} \pmod{2m}$ and $t_1 \geq t_0$. Intuitively, $t_1$ is the first time after $t_0$ where the color on odd vertices changes. Let $c_1 := \varphi_e(v_{t_1 - 2})$ and $c_2 := \varphi_e(v_{t_1})$. We have $c_1\neq c_2$ by Claim~\ref{structure-claim}.

    Consider the $2m$ consecutive vertices in $\overline{S} := \{v_{t_0}, v_{t_0 + 1}, \dots, v_{t_0+2m -1}\}$. Then Claim~\ref{structure-claim} tells us that for all $v_j \in \overline{S}$ we have:
    \begin{equation}\notag
        \varphi_e(v_j) = \begin{cases}
            c &\text{if } j \text{ is even},\\
            c_1 &\text{if } j \text{ is odd and } j < t_1, \\
            c_2 &\text{if } j \text{ is odd and } j \geq t_1.  
        \end{cases}
    \end{equation}

    Then the vertices $v_j$ with $j \in [t_0 + 1, t_1 - 1] =: T_1$ satisfy, by Claim~\ref{two-colors-claim}:
    \begin{equation}\notag
        \varphi(v_j) \in \{\varphi_e(v_{j-1}), \varphi_e(v_j)\} = \{c, c_1\}.
    \end{equation}
    Similarly, for $j \in [t_1, t_2 + 2m - 1] =: T_2$ we have, using that $t_1 - 1$ is even:
    \begin{equation}\notag
        \varphi(v_j) \in \{c, c_2\}.
    \end{equation}
    We note that $|T_1| + |T_2| = 2m-1$. Let $T \in \{T_1, T_2\}$ be the larger of these, so $|T| \geq m$.
    Let $T'\subseteq T \subseteq \overline{S}$ be any subset of $m$ consecutive vertices of $T$. Then there exists some $c'\in \{c_1,c_2\}$ such that all vertices in $T$ (and thus $T'$) have color $c$ or $c'$ under $\phi$. We know that $\varphi(v_i) = \varphi(v_{i+1}) = c$, so not all vertices of color $c$ in $\overline{S}$ have the same parity. Thus, by Claim~\ref{2m-claim} at most $2r$ vertices in $\overline{S}$ have color $c$. Therefore, $T'$ forms a set of $m$ consecutive vertices in $\overline{S}$, at least $m - 2r$ of which have color $c'$. As $m - 2r > \frac{m}{2} + 1$ by our assumptions on $m$ in the lemma, there are at least two consecutive vertices of color $c'$ in $T'$. But we also have $m - 2r > 2r$, so by Claim~\ref{2m-claim} all vertices of color $c'$ in $T'$ should have the same parity, a contradiction.

    \medskip

    \paragraph*{\textbf{Case~2.} $k = 5$.}

    In this case, we do not have as much useful information about the structure of the coloring. We can still prove the lemma using the following observations: For $k=5$, we have $n_{k,m} = 4m$. By Remark~\ref{local-periodicity-rem} every set of $4m$ consecutive vertices in the ordering $(v_0,\ldots,v_{N-1})$ contains exactly $m$ vertices of each expected color. Next, we claim that two out of the four colors in $\{1,2,3,4\}$ appear only on even vertices as expected colors ("even colors") and the two remaining colors appear only on odd vertices ("odd colors") as expected colors: To see this, consider any two vertices $v_x, v_y$ of distinct parity with $x,y \in [1,4m]$. From Claim~\ref{local-periodicity-claim} we know that $\varphi_e(v_{x + 4m}) = \varphi_e(v_{x})$. As one of $x + 4m - y$ and $y - x$ must be smaller than $2m$ and both are odd, one of $d_N(x, y), d_N(x+4m, y)$ is an element of $D_1$. Since all of $v_x, v_y$ and $v_{x+4m}$ are contained in $S=\{v_1,\ldots,v_{12m}\}$, it follows that $\varphi_e(v_x)=\phi(v_x) \neq \phi(v_y)=\varphi_e(v_y)$. 

    Since $\phi_e$ is periodic with period $4m$, the above implies that also $\phi_e(v_x)\neq \phi_e(v_y)$ for any $x,y\in \mathbb{Z}$ with distinct parity. Thus, for every $c\in \{1,2,3,4\}$ all vertices of expected color $c$ have the same parity. It remains to be argued that exactly two colors appear on odd and exactly two appear on even vertices. Suppose not. Then there exists some $c\in \{1,2,3,4\}$ and some $b\in \{0,1\}$ such that all vertices $v_i$ with $i\equiv b\pmod{2}$ have expected color~$c$. This is clearly a contradiction, as for instance the vertices $v_{b+2}$ and $v_{b+2m+2}$ are both contained in $S$ and adjacent to each other in $G$ and thus in $G'$, so they cannot have the same color under $\phi$, and thus they also have distinct expected colors. 

    Moving on, let us consider two sets $S_L$ and $S_R$ of $2m$ consecutive vertices ending or starting in $v_{i-1}, v_i, v_{i+1}$ respectively:
    \begin{equation}\notag
        S_L := \{v_{j} \mid j \in [i - 2m + 2, i + 1]\}
    \end{equation}
    and
    \begin{equation}\notag
        S_R := \{v_{j} \mid j \in [i - 1, i + 2m - 2]\}.
    \end{equation}

    Note that $S_R$ might not be fully contained in $S'$, but it will be sufficient in the following arguments that all indices of the vertices in $S_L \cup S_R$ are smaller than $\frac{N}{2}$.

    Our goal is, as in the previous case, to find many vertices of the same color close to each other, contradicting Claim~\ref{2m-claim}. In this case, we will use more properties of the vertices near $v_i$. In particular, we start by getting more out of the minimality of $i$:

    We note that the minimality of $i$ implies that $v_{i + 1}$ is the only vertex in $S_L$ not colored in its expected color. By Claim~\ref{2m-claim} and since $\phi(v_i)=\phi(v_{i+1})=c$, there are at most $2r$ vertices of color $c$ in $S_L$, and so at most $2r - 1$ of expected color~$c$. The set $S_L \cup S_R$ consists of $4m-3$ consecutive vertices, and so by Remark~\ref{local-periodicity-rem} at least $m-3$ of them have expected color~$c$. Therefore, the set $S_R \setminus S_L$ contains at least $m - 3 - (2r - 1)$ vertices of expected color~$c$. Since $i \in S_L \cap S_R$ we can conclude that $S_R$ contains a set $S_c$ of at least $m - 2r - 1$ vertices of expected color~$c$. 
    
    Using Claim~\ref{2m-claim} and $\varphi(v_i) = \varphi(v_{i + 1}) = c$  we get that $S_R$ contains at most $2r$ vertices of color~$c$. As $\phi(v_{i+1})=c$ but $v_{i+1} \notin S_c$, we have that $S_c$ contains at most $2r - 1$ vertices of color~$c$. Without loss of generality let us assume in the following that $i$ is even (the argument in the case when the parity of $i$ is odd is fully analogous, switching odd and even in the relevant places of the following argument). In particular, since $\phi_e(v_{i})=c$, it follows by our observation at the beginning of the proof about parities of vertices of a given expected color, that \emph{all} vertices of expected color $c$ are even, so $c$ is an even color.
    
    Let $c_1 := \varphi(v_{i - 1})$ and note that $i - 1$ is odd while all elements of $S_c$ are even. Therefore, at most $2r - 1$ vertices in $S_c $ can have color~$c_1$, for otherwise $\{v_{i-1}\}\cup S_c\subseteq S_R$ would contain more than $2r$ vertices of color $c_1$, not all of the same parity, contradicting Claim~\ref{2m-claim}. 
    
    Note further that since $c_1=\phi(v_{i-1})=\phi_e(v_{i-1})$, we have that $c_1$ is an odd color. Let $c_2 \in \{1,2,3,4\}$ be the other odd color besides $c_1$. As the vertices in $S_c$ are even, by Claim~\ref{two-colors-claim} the ones that do not have color~$c$ must have an odd color. Combining the previous observations, we can conclude that at least $m - 2r - 1 - (2r - 1) - (2r - 1) = m - 6r + 1$ vertices in $S_c$ have color~$c_2$ (here we used that $S_c$ has size at least $m-2r-1$, and contains at most $2r-1$ vertices of color $c$ and at most $2r-1$ vertices of color $c_1$). For every such $v_j \in S_c$, by Claim~\ref{two-colors-claim} we must have $\varphi_e(v_{j - 1}) = c_2$. We conclude that in addition to the at least $m - 2r - 1$ vertices of expected color~$c$ in $S_R$, there are at least $m - 6r + 1$ vertices of expected color~$c_2$ in $S_R$ since $v_{i - 1} \notin S_c$.

    Let $S_1:=\{v_j\in S_R|c \in \{\varphi_e(v_{j - 1}), \varphi_e(v_j)\}\}$ and $S_2:=\{v_j\in S_R|c_2 \in \{\varphi_e(v_{j - 1}), \varphi_e(v_j)\}\}$. Then we have $|S_1|\ge 2(m - 2r - 1) - 1$, where the final $-1$ comes from the fact that $v_{i + 2m - 2}$ is even and could have expected color~$c$, while $v_{i + 2m - 1}$ is not considered anymore. Similarly, we obtain  $|S_2|\ge 2(m - 6r + 1)$.

    Let now $S^\ast \subseteq S_R$ be the set of all $v_j\in S_R$ such that $\{\varphi_e(v_{j - 1}), \varphi_e(v_j)\} = \{c, c_2\}$. Clearly, $S^\ast=S_1\cap S_2$ and thus we have \begin{equation}\notag
    |S^\ast|=|S_1|+|S_2|-|S_1\cup S_2|\ge |S_1|+|S_2|-|S_R|\ge  (2(m-2r-1)-1)+2(m-6r+1)-2m = 2m - 16r - 1.\end{equation}

    As $S^\ast \subseteq S_R$ and since at most $2r$ vertices in $S_R$ have color~$c$, at most $2r$ vertices of $S^\ast$ may have color~$c$. By Claim~\ref{two-colors-claim}, all the remaining at least $2m - 18r - 1$ vertices of $S_R$ must have color~$c_2$. As $2m - 18r - 1 > \frac{2m}{2} + 1$ by our assumption on $m$ in the lemma, $S_R$ contains both even and odd vertices of color~$c_2$. Additionally $2m - 18r - 1 > 2r$, contradicting Claim~\ref{2m-claim}. 

    \medskip

    Having obtained a contradiction in both Case~1 and~2, it follows that our assumption about the existence of a vertex $v\in S'$ such that $\varphi(v) \neq \varphi_e(v)$ was wrong. Thus indeed, we have $\phi(v)=\phi_e(v)$ for all $v\in S'$. Using Remark~\ref{local-periodicity-rem}, it follows that $\phi$ is indeed periodic on $S'$ with period $n_{k,m}$. 
    This completes the proof of Lemma~\ref{partial-periodicity-lemma}.
\end{proof}

\section{Conclusive remarks}\label{sec:conc}
The only case of Dirac's conjecture and Erd\H{o}s's problems that remains open is $k=4$. In particular, the intriguing question whether there exists a $(4,1)$-graph remains open. However, the circulant graphs $G_{k,m,q}$ considered here cannot settle this remaining case. This is due to the fact that for $k=4$ and $m\ge 2$ we have $D_2=D_3=\emptyset$, so the only distances we allow for edges are those in $D_1=\{1,\ldots,2m-1\}$. It is then not hard to check that the so-defined circulant graph has chromatic number $3$. Furthermore, in the $k=4$ case there is a fundamental problem with using Jensen's color pattern illustrated in Table~\ref{table-even-coloring} to color the vertex-deleted subgraphs $G_{k,m,q}-v$: This color pattern forbids all distances greater than $1$ for the edges, and taking only distance $1$ results in an odd cycle, which is $3$-colorable. This of course does not rule out that some other distance sets could yield circulant $(4,1)$-graphs, but we were not able to find them so far, and we lean towards thinking that they do not exist.

We conclude with an observation about necessary properties of $(4,r)$-graphs, which may help guide the search for such graphs when $r$ is small.
\begin{proposition}\label{4-r-properties}
Let $r\ge 1$ be an integer and let $G$ be any $(4,r)$-graph. Then $G$ has edge-connectivity (and thus minimum degree) at least $3r+3$. Furthermore, $G$ has maximum degree at most $|V(G)|-(2r+3)$. In particular, $G$ has at least $5r+6$ vertices.
\end{proposition}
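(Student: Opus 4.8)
The plan is to prove the three assertions in turn, the last one following formally from the first two.

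\emph{Edge-connectivity at least $3r+3$.} Since $k$-vertex-critical graphs are connected, $G$ is connected. Suppose towards a contradiction that $G$ has an edge cut $R$ with $|R|\le 3r+2$, and let $G-R$ have components $C_1,\dots,C_m$; set $A:=V(C_1)$ and $B:=V(G)\setminus A$, so all $A$--$B$ edges lie in $R$ and both $A,B$ are nonempty proper subsets of $V(G)$. As $G$ is $4$-vertex-critical, every proper induced subgraph of $G$ is $3$-colorable, so fix proper $3$-colorings $c_A\colon A\to\{1,2,3\}$ and $c_B\colon B\to\{1,2,3\}$. For a uniformly random permutation $\pi$ of $\{1,2,3\}$, each edge of $R$ between $A$ and $B$ is monochromatic under $c_A\cup(\pi\circ c_B)$ with probability $\tfrac13$, so the expected number of such monochromatic edges is at most $(3r+2)/3<r+1$. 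Hence some $\pi$ leaves at most $r$ edges of $R$ monochromatic; deleting those at most $r$ edges yields a $3$-colorable graph, contradicting that $G$ is a $(k,r)$-graph. Thus $G$ has edge-connectivity, and therefore minimum degree, at least $3r+3$.

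\emph{Maximum degree at most $|V(G)|-(2r+3)$.} Suppose not, and let $v$ be a vertex whose set $W$ of non-neighbors satisfies $|W|\le 2r+1$. The key structural fact is that $G[N_G(v)]$ is either bipartite or an odd cycle: for each $x\in N_G(v)$, take a proper $3$-coloring of $G-x$; since $v$ is adjacent to all of $N_G(v)\setminus\{x\}$, those vertices avoid the color of $v$, so $G[N_G(v)\setminus\{x\}]$ is bipartite. A graph in which deleting any single vertex yields a bipartite graph is either bipartite or has all its odd cycles spanning and chordless (any chord would produce a strictly shorter, hence non-spanning, odd cycle), i.e.\ equal to an odd cycle. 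If $G[N_G(v)]$ is an odd cycle, then each of its vertices has exactly $2$ neighbors inside $N_G(v)$ and the one neighbor $v$, so by the minimum-degree bound it has at least $3r$ neighbors in $W$, forcing $|W|\ge 3r$; this contradicts $|W|\le 2r+1$ when $r\ge 2$, while for $r=1$ it forces $|W|=3$ with every vertex of $N_G(v)$ adjacent to all of $W$, so the odd cycle $N_G(v)$ uses all three colors in any proper $3$-coloring of $G-v$ and $W$ becomes uncolorable, contradicting $\chi(G-v)=3$.

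\emph{The bipartite case (the main obstacle).} It remains to rule out $G[N_G(v)]$ being bipartite. Here one aims to exhibit a critical edge set of size at most $r$: fix a proper $2$-coloring of $G[N_G(v)]$ with colors $\{2,3\}$ and a proper $3$-coloring $g$ of $G[W]$, put $v\mapsto 1$, and delete every monochromatic $W$--$N_G(v)$ edge; this produces a proper $3$-coloring of $G$ minus the deleted set, so it suffices to arrange at most $r$ deletions. Averaging over the independent flips of the $2$-coloring on the components of $G[N_G(v)]$ shows the number of deletions can be made at most half the total $N_G(v)$-degree of the vertices of $W$ not colored $1$, and one then chooses $g$ exploiting the rigidity of $G[W]$ — applying the $(k,r)$-property at each $u\in W$ (whose color class of $v$ lies in $\{v\}\cup W$) shows $G[W]$ has minimum degree at least $r+1$, and $|W|\le 2r+1$ makes $G[W]$ highly structured — to push this quantity down to at most $2r$; alternatively one derives a direct contradiction from these combined constraints together with $\chi(G-v)=3$ and $4$-vertex-criticality. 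Closing this counting is the delicate part of the argument.

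\emph{At least $5r+6$ vertices.} Finally, take a vertex $u$ of minimum degree. By the first part it has at least $3r+3$ neighbors, and by the second part at most $|V(G)|-(2r+3)$ of them, so $3r+3\le |V(G)|-(2r+3)$, i.e.\ $|V(G)|\ge 5r+6$.
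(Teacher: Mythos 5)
Your edge-connectivity argument is correct and matches the paper's almost verbatim (random permutation of colors across a hypothetical small cut, expectation $\le (3r+2)/3 < r+1$).

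The maximum-degree argument, however, has a genuine gap that you honestly flag yourself. Your reduction to ``$G[N_G(v)]$ is bipartite or a spanning chordless odd cycle'' is correct and neatly dispatches the odd-cycle case, but the bipartite case --- the generic one --- is left open: the averaging sketch requires $\sum_{w\in W,\,g(w)\ne 1}|N_G(w)\cap N_G(v)|\le 2r$, which you do not establish, and it is not clear it can be closed without further ideas. Since the ``at least $5r+6$ vertices'' conclusion relies on the maximum-degree bound, the proposal is incomplete as written.

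The paper avoids the case analysis on $G[N_G(v)]$ entirely, using an ingredient you essentially derived but did not fully exploit. You observed that for each $w\in W$ one can take a proper $3$-coloring of $G-w$ with $v$ colored $1$, so the color-$1$ class lies in $\{v\}\cup W$, and the $(4,r)$-property forces $w$ to have at least $r+1$ neighbors of color $1$ inside $W$; you recorded only that $G[W]$ has minimum degree $\ge r+1$. The paper records the stronger fact that these $\ge r+1$ neighbors form an \emph{independent} set in $W$. It then takes a longest path $w_1,\dots,w_t$ in $G[W]$; by maximality $N_G(w_t)\cap W\subseteq\{w_1,\dots,w_{t-1}\}$, so $\{w_1,\dots,w_{t-1}\}$ must contain an independent set of size $r+1$. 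But this set spans a path on $t-1\le |W|-1\le 2r$ vertices, whose independence number is at most $\big\lceil\tfrac{t-1}{2}\big\rceil\le r$ --- a contradiction. This closes the argument cleanly with no structural analysis of $G[N_G(v)]$ and no counting of crossing edges. Strengthening your degree observation to an independence observation and replacing the bipartite/odd-cycle dichotomy by the longest-path trick would make your proof complete.
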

\begin{proof}
Let us first show the lower bound on the edge-connectivity. Consider any partition $(X,Y)$ of $V(G)$ into disjoint non-empty sets. Let $x$ denote the number of edges in $G$ going between $X$ and $Y$. Our goal is to show that $x\ge 3r+3$. Note that since $G$ is $4$-vertex-critical, there exist proper $3$-colorings $c_X:X\rightarrow \{1,2,3\}$, $c_Y:Y\rightarrow \{1,2,3\}$ of $G[X]$ and $G[Y]$. Let $\pi \in S_3$ be a permutation chosen uniformly at random. Then $\pi\circ c_Y$ is also a proper coloring of $G[Y]$. Furthermore, for each of the $x$ edges in $G$ going between $X$ and $Y$ the probability that their endpoint in $X$ has the same color under $c_X$ as their endpoint in $Y$ under $\pi\circ c_Y$ equals $\frac{1}{3}$. Thus, the expected number of such edges is $\frac{x}{3}$. In particular, this means we can choose $\pi\in S_3$ such that there are at most $\frac{x}{3}$ monochromatic edges in the $3$-coloring of $G$ defined by the common extension of $c_X$ and $\pi\circ c_Y$. On the other hand, since $G$ is a $(4,r)$-graph there must be at least $r+1$ such edges in every $3$-coloring of $G$. It follows that $\frac{x}{3}\ge r+1$ and so $x\ge 3r+3$, as desired.

Next, let us prove the statement on the maximum degree. Towards a contradiction, suppose there is some $v\in V(G)$ with $|N_G(v)|\ge |V(G)|-(2r+2)$. Let $W:=V(G)\setminus (\{v\}\cup N_G(v))$, such that $|W|\le 2r+1$. We claim that for every vertex $w\in W$ the set $N_G(w)\cap W$ has an independent subset of size at least $r+1$. To see this, consider any proper $3$-coloring $c:V(G)\setminus \{w\}\rightarrow \{1,2,3\}$ of $G-w$. W.l.o.g., we have $c(v)=1$ and thus $c(N_G(v))\subseteq \{2,3\}$. Since $G$ is a $(4,r)$-graph at least $r+1$ neighbors of $w$ must have color $1$ in $c$. All of these must be contained in $W$ as well and clearly form an independent set. This shows the above assertion. Next consider a path in $G[W]$ of maximum length. Let $w_1,\ldots,w_t$ be its sequence of vertices. Then all neighbors of $w_t$ in $W$ are contained in $\{w_1,\ldots,w_{t-1}\}$. Thus, by the above $\{w_1,\ldots,w_{t-1}\}$ has an independent subset of size $r+1$. On the other hand, since $w_1,\ldots,w_{t-1}$ form a path they cannot contain an independent set larger than $\left\lceil \frac{t-1}{2}\right\rceil \le \left\lceil \frac{|W|-1}{2} \right\rceil\le r$, a contradiction. This concludes the proof.
\end{proof}
In particular, the sparsest option for a $(4,1)$-graph left by Proposition~\ref{4-r-properties} is that of a $6$-regular graph, and from a computational viewpoint it seems natural to restrict the search to such graphs first.
\begin{problem}
    Does there exist a $6$-regular $(4,1)$-graph?
\end{problem}
\medskip

\paragraph*{\textbf{Acknowledgments.} We would like to gratefully acknowledge Yuval Wigderson for a helpful discussion and pointing us to the reference~\cite{conlonfox}.}

\appendix

\section{Vertex-criticality: Proof of Lemma~\ref{colorability-lemma}}\label{sec:vertcrit}

In this section we supply the proof of Lemma~\ref{colorability-lemma} that was left out in Section~\ref{sec:main} for the sake of readability. To prove the lemma, we have to show that there exists a proper $(k-1)$-coloring of $G_{k,m,q}-v$ for every vertex $v$. To do so, we will use the same coloring as Jensen~\cite{jensen}. However, note that our graphs have many additional edges compared to Jensen's graphs, so Lemma~\ref{colorability-lemma} does not immediately follow from the fact that the graphs in \cite{jensen} have this property.

\begin{proof}[Proof of Lemma~\ref{colorability-lemma}]
In the rest of the proof, let us abbreviate $G:=G_{k,m,q}$ and $D:=D_{k,m,q}$. Since $G_{k,m,q}$ is a circulant graph and thus vertex-transitive, it is sufficient to show that $G - v_0$ is $(k-1)$-colorable. We will show that the $n_{k,m}$-periodic coloring $\varphi_J$ used by Jensen~\cite{jensen} for his somewhat sparser graphs still forms a proper coloring for our graphs.

    Before defining $\phi_J$ formally, we set the following notation (the same that is used in the proof of Lemma~\ref{partial-periodicity-lemma}):
    \begin{equation}\notag
        H_i := \{v_j \mid j \in [i, i + n_{k,m} - 1]\}
    \end{equation} 
    for $i \in [1, (q-1)n_{k,m} + 1]$ and
    \begin{equation}\notag
        F := \{0, 2, \dots, 2m - 2\}. 
    \end{equation}

    Intuitively, $\varphi_J$ is a $n_{k,m}$-periodic coloring which is given on $H_1$ as follows: Each color has one (if $k$ is odd) or two (if $k$ is even) "starting points" in $H_1$. Then, all vertices $v_i$ such that $i$ is the sum of a starting point and an element of $F$ are assigned that color.

    The proof will be split into two cases depending on the parity of $k$, since the colorings are constructed slightly differently.

    \medskip
    
    \paragraph*{\textbf{Case~1.} $k$ is odd.}

    We define for colors $c \in [1, k - 1]$ the following "starting point":
    \begin{equation} \notag
        t_c := 
        \begin{cases}
        (c - 1)m + 1 & \text{ if } c \text{ is odd} \\
        (c - 2)m + 2 & \text{ if } c \text{ is even}.
        \end{cases}
    \end{equation} 
    For every $i \in [1, N-1]$ we set $\varphi_J(v_i) := c$ if and only if there exists $f \in F$ such that $i \equiv t_c + f \pmod{n_{k, m}}$. We will now show that this coloring is well defined. 

    We first check that every vertex is assigned at most one color this way: Suppose towards a contradiction that there exists $i \in [1, N-1]$ such that $i \equiv t_a + f_a \pmod{n_{k, m}}$ and $i \equiv t_b + f_b \pmod{n_{k, m}}$ where $f_a, f_b \in F$ and $a, b$ are distinct colors. Without loss of generality, we may assume $1 \leq i \leq n_{k, m}$. We note that $t_c + f \leq ((k-3)m + 2) + (2m - 2) = n_{k,m}$ for all colors $c$ and $f \in F$. Since $1 \leq t_a + f_a \leq n_{k, m}$ and $1 \leq t_b + f_b \leq n_{k,m}$ we have $t_a + f_a = i = t_b + f_b$. As all elements of $F$ are even, we know that $t_a$ and $t_b$ must have the same parity. From their definitions it then follows that $|t_a - t_b| \geq 2m$, which is larger than the difference of any elements of $F$, a contradiction since $|t_a-t_b| = |f_b - f_a|$.

    Next we check that every vertex is also assigned at least one color: As shown above we have $t_c + F \subseteq [1, n_{k, m}]$ for each color~$c$. There are $k-1$ colors and $|t_c+F|=|F|=m$ for every $c\in [1,k-1]$. As we know that no vertex receives more than one color, we can conclude that each of the $n_{k,m} = (k-1)m$ vertices in $H_1$ also has at least one color. By the periodic nature of the coloring this means every vertex, including the ones outside of $H_1$, has exactly one color.

    We will now show that $\varphi_J$ is a proper $(k-1)$-coloring of $G - v_0$: Towards a contradiction, assume that there are adjacent vertices $v_i, v_j$ in $G - v_0$ such that $\varphi_J(v_i) = c = \varphi_J(v_j)$ for some color~$c$. Write $i = a_i n_{k,m} + t_c + f_i$ and $j = a_j n_{k, m} + t_c + f_j$ where $f_i, f_j \in F$ and $a_i, a_j \in \mathbb{Z}$. We note that $0 \leq a_i, a_j < q$. Without loss of generality, let $i < j$. 
    
    Taking the difference between the above equations, we obtain:
    \begin{equation}\notag
        j - i = (a_j - a_i) n_{k, m} + f_j - f_i,
    \end{equation}
    and thus
    \begin{equation}\notag
        j - i \equiv f_j - f_i \pmod{n_{k, m}}.
    \end{equation}
    As $n_{k, m}$ and all elements of $F$ are even, it follows that $|i - j|$ is even. Therefore, $j - i \notin D_1$. Using that $f_j,f_i \in F$ we have: 
    \begin{equation}\notag
        f_j - f_i \in [-2m + 2, 2m - 2].
    \end{equation}
    In particular, the residue of $j - i$ modulo $n_{k, m}$ is not in $[2m, (k - 3)m + 1]$. In conclusion, $j - i \notin D$. Since we assumed that $v_i$ and $v_j$ are adjacent in $G$, we have $d_N(j,i)=\min\{j-i,N-(j-i)\}\in D$. Thus, the only possibility is that $j - i > \frac{N}{2}$ and $d_N(j,i)=N-(j-i)$. Since
    \begin{equation}\notag
        N - (j - i) = (q - a_j + a_i)n_{k, m} + (1 - f_j + f_i),
    \end{equation}
    we obtain
    \begin{equation}\notag
        d_N(j, i) \equiv 1 - f_j + f_i \pmod{n_{k, m}}.
    \end{equation}
    We note that $1 - f_j + f_i \in [-2m + 3, 2m - 1]$, which implies $d_N(j, i) \notin D_2$. 
    
    Recall that $0 \leq a_i, a_j < q$. Thus we obtain, using $k\ge 5$:
    \begin{equation}\notag
        d_N(j, i) \geq (q-(q-1) + 0)n_{k,m} + (-2m + 3) = (k - 3)m + 3 \geq 2m + 3.
    \end{equation}
    
    This implies that also $d_N(j,i) \notin D_1$. Since $D=D_1\cup D_2$, we obtain a contradiction to $d_N(j,i)\in D$. This shows that our initial assumption about the existence of the monochromatic edge $v_iv_j$ under $\phi_J$ was wrong. Thus, $\phi_J$ is indeed a proper $(k-1)$-coloring of $G-v_0$, concluding the proof in Case~1.

\medskip

\paragraph*{\textbf{Case~2.} $k$ is even.}

    We begin by defining the coloring $\varphi_J$ of $G - v_0$. Then we will show that it is a well-defined and proper coloring.

    For $c \in [1, k - 1]$ we define a set of two "starting points" as follows:
    \begin{equation}\label{def-starts-even}
    T_c := 
    \begin{cases}
        \left\{(c - 1)m + 1, (c + k - 3)m + 2\right\} & \text{if } c \text{ is odd} \\
        \left\{(c - 2)m + 2, (c + k - 2)m + 1\right\} & \text{if } c \text{ is even}.
    \end{cases}
    \end{equation}
 
    We now define the coloring $\varphi_J$ by setting: $\varphi_J(v_i) = c$ if and only if there exist $s \in T_c$ and $f \in F$ such that 
    \begin{equation}\notag
        i \equiv s + f \pmod{n_{k, m}}.
    \end{equation}

    We will show that this coloring is well-defined, i.e. the above rules assign exactly one color to each vertex. We do this by showing that $T = T'$, where
    \begin{equation}\notag
        T := \bigcup_{c \in [1, k-1]} T_c 
    \end{equation}
    and
    \begin{equation}\notag
        T' := (\{1, 2\} + 2m\mathbb{Z})\cap [1, n_{k,m}].
    \end{equation}
    We note that if $T = T'$ then every vertex $v_i \in H_1$ will satisfy $|(i - F) \cap T| = 1$, so it will have exactly one color assigned. Since the definition of the coloring is periodic with period $n_{k,m}$, this will then also imply that every vertex in $G-v_0$ is assigned exactly one color.
    
    One easily checks using the definition and that $k$ is even that $T\subseteq T'$. Since furthermore $T$ is a union of the $(k-1)$ $2$-element sets $T_1,\dots, T_{k-1}$ and since one easily checks that $T'$ has exactly $2(k-1)$ elements (recall $n_{k,m}=2(k-1)m$), it will be sufficient to show that $T_1,\dots, T_{k-1}$ are pairwise disjoint to establish that $T=T'$. Towards a contradiction, assume there are $c_1\neq c_2 \in [1, k-1]$ satisfying $T_{c_1} \cap T_{c_2} \neq \emptyset$. As the expressions defining the elements of $T_c$ are strictly increasing for vertices of the same parity, we can conclude that $c_1$ and $c_2$ have different parities. Let $t \in T_{c_1} \cap T_{c_2}$ and without loss of generality, let $c_1$ be odd. If $t$ is odd, then~(\ref{def-starts-even}) yields
    \begin{equation}\notag
        (c_1 - 1)m + 1 = (c_2 + k - 2)m + 1.
    \end{equation}
    Equivalently, $c_1 = c_2 + k - 1$. This is a contradiction, as $c_1, c_2 \in [1, k - 1]$. Similarly, if $t$ is even, we get $c_2 - 2 = c_1 + k - 3$, which is also a contradiction. All in all, we conclude that the coloring $\varphi_J$ is well-defined.

    We will now show that $\varphi_J$ is a proper coloring. 

    Equivalently, we may show that if $i, j \in [1, N-1]$ are such that the vertices $v_i$ and $v_j$ have the same color, then $d_N(i, j) \notin D$. So let $i < j \in [1, N-1]$ and $c \in [1, k-1]$ be such that $\varphi_J(v_i) = c = \varphi_J(v_j)$. By definition of $\varphi_J$ this implies that there exist $t_i, t_j \in T_c$ and $f_i, f_j \in F$ such that $i \equiv t_i + f_i \pmod{n_{k,m}}$ and $j \equiv t_j + f_j \pmod{n_{k,m}}$. Then 
    \begin{equation}\notag
    j - i \equiv t_j - t_i + f_j - f_i \pmod{n_{k,m}}.
    \end{equation}

    From (\ref{def-starts-even}) it follows that
    \begin{equation}\label{all-start-diffs}
        t_j - t_i \in \{-(k-2)m - 1, 0, (k-2)m + 1\} \cup \{-km + 1, 0, km - 1\}. 
    \end{equation}
    As $n_{k,m} = 2(k - 1)m = ((k - 2)m + 1) + (km - 1)$ we get that
    \begin{equation}\label{start-diffs}\notag
        t_j - t_i \in \{0, (k-2)m + 1, km - 1\} + n_{k,m}\mathbb{Z}. 
    \end{equation}

    This implies
    \begin{equation}\notag
        j - i \in A + n_{k,m}\mathbb{Z}
    \end{equation}
    where
    \begin{equation}\notag
     A := \{0, (k-2)m + 1, km - 1\} + F - F.
    \end{equation}

    We decompose $A = A_0 \cup A_1 \cup A_2$, where
    \begin{align*}
        A_0 &= \{0\} + F - F \\
        A_1 &= \{(k - 2)m + 1\} + F - F \\
        A_2 &= \{km - 1\} + F - F .
    \end{align*}
    Computing the elements explicitly, we obtain:
    \begin{align*}
        A_0 &= \{-2m + 2, -2m + 4, \dots, 2m -2\} \\
        A_1 &= \{(k-4)m + 3, (k-4)m + 5, \dots, km - 1\} \\
        A_2 &= \{(k-2)m + 1, (k-2)m + 3, \dots, (k+2)m - 3\}.
    \end{align*}
    For convenience we define
    \begin{equation*}
        A_0' := \{0, 2, \dots, 2m -2\} \cup \{(2k-4)m + 2, (2k-4)m + 4, \dots, (2k-2)m -2\}
    \end{equation*}
    and $A' := A_0' \cup A_1 \cup A_2$, so that $a \in A' + n_{k,m}\mathbb{Z} \iff a \in A + n_{k,m} \mathbb Z$ and $A' \subseteq [0, n_{k, m} - 1]$.

    Next, we consider the distances. Since $i<j$, we have $d_N(i, j) \in \{j - i, N - (j - i)\}$. Noting that $N \equiv 1 \pmod{n_{k,m}}$, we can see that $\varphi_J(v_i) = \varphi_J(v_j)$ implies
    \begin{equation}\notag
        d_N(i, j) \in (A' \cup (n_{k,m} + 1 - A')) + n_{k,m}\mathbb{Z}.
    \end{equation}    
    We define $B:= n_{k, m} + 1 - A'$. As before, we decompose $B = B_0 \cup B_1 \cup B_2$, where $B_0 = n_{k,m} + 1 - A_0'$ and $B_s = n_{k,m} + 1 - A_s$ for $s\in\{1,2\}$. Then
    \begin{align*}
    B_0 &= \{3, \dots, 2m - 1\} \cup \{(2k - 4)m + 3, (2k - 4)m + 5, \dots, 2(k-2)m - 1,2(k-2)m+1\} \\
    B_1 &= \{(k - 2)m + 2, (k - 2)m + 4, \dots, (k + 2)m - 2\}\\
    B_2 &= \{(k - 4)m + 4, (k - 4)m + 6, \dots, km\}.
    \end{align*}

    Noting that $2(k-2)m+1\equiv 1\pmod{n_{k,m}}$, it is further convenient to set $B':=B_0'\cup B_1\cup B_2$, where 
    \begin{equation}\notag
        B_0' := \{1,3, \dots, 2m - 1\} \cup \{(2k - 4)m + 3, (2k - 4)m + 5, \dots, 2(k-2)m - 1\}.
    \end{equation}
Clearly, $(n_{k,m}+1-A')+n_{k,m}\mathbb{Z}=B+n_{k,m}\mathbb{Z}=B'+n_{k,m}\mathbb{Z}$. 
    Now we have
    \begin{equation}\notag
    d_N(i, j) \in (A' \cup B')+ n_{k, m}\mathbb{Z}. 
    \end{equation}
    We note that $A' \cup B'$ is the following union of intervals:
    \begin{equation}\notag
     [0, 2m - 1] \cup [(k - 4)m + 3, (k + 2)m - 2] \cup [(2k - 4)m + 2, (2k-2)m -1]. 
    \end{equation}

    This implies that $d_N(i,j)\notin D_2\cup D_3$. 
    
    It remains to rule out that $d_N(i, j) \in D_1$. Towards a contradiction, suppose that this were the case. Then, since $j-i\in A' + n_{k,m}\Z$ is disjoint from $D_1$, it follows from the above that $d_N(i, j) = N-(j-i)$. 
    
    Let $q_i$ and $q_j$ be the unique integers such that $i = q_in_{k, m} + f_i+t_i$ and $j = q_jn_{k,m}+f_j+t_j $. Then we have $q_j - q_i \leq q - 1$ because $0\leq q_i, q_j < q$ (as $t_i+f_i,t_j+f_j\le (2k-2)m=n_{k,m}$), as well as $t_j - t_i \leq km - 1$ (from (\ref{all-start-diffs})) and $f_j - f_i \leq 2m - 2$ (by definition of $F$). These imply, together with $N = qn_{k,m} + 1$: 
    \begin{equation*}
        d_N(i, j) = N - |i-j| \geq n_{k, m} + 1 - (km - 1) - (2m - 2) = (k-4)m + 4 > 2m,
    \end{equation*}
    where in the last line we used that $k \geq 6$. Hence, we indeed have $d_N(i,j)\notin D_1$, and since we previously showed $d_N(i,j)\notin D_2\cup D_3$, it follows that $d_N(i,j)\notin D$, as desired.

    Summarizing, it follows that there are no adjacent vertices in $G-v_0$ with the the same color under $\phi_J$. This concludes the proof that $\varphi_J$ is a proper $(k - 1)$-coloring of $G - v_0$, also in the case when $k$ is even. 
\end{proof}

\section{Proof of Claim~\ref{structure-claim} for $k \in \{6,8\}$}\label{sec:68}
In Section~\ref{sec:main} we have proved Claim~\ref{structure-claim} for all $k \geq 7$ when $k$ is odd and all $k \geq 10$ when $k$ is even, delaying the proofs for $k=6$ and $k =8$. Here we will provide a proof of Claim~\ref{structure-claim} when $k \in \{6,8\}$. We remark that the proof presented here for these cases could also be extended to all even $k\ge 6$, but we prefer to keep the much shorter proof that works for most values of $k$ presented in Section~\ref{sec:main} and present the more involved argument here only in the cases $k\in \{6,8\}$, where it can be better supported by illustrations.

\begin{claimproof}[Proof of Claim~\ref{structure-claim} when $k \in \{6,8\}$]
We recall that we need to show that there exist an even number $t_{\text{even}} \in \mathbb Z$ and an odd number $t_{\text{odd}} \in \mathbb{Z}$ such that for all $i \in \mathbb{Z}$ we have $\varphi_e(v_i) \neq \varphi_e(v_{i - 2})$ if and only if $i \in \{t_{\text{odd}}, t_{\text{even}}\} + 2m\mathbb{Z}$.

In the following it shall be useful to consider indices of vertices modulo $n_{k,m}$. Thus, we define an equivalence relation $\sim$ on the set $\{v_l|l\in\mathbb{Z}\}$ by $v_{l_1} \sim v_{l_2} \iff l_1 \equiv l_2 \pmod{n_{k,m}}$.

 For every $j \in \mathbb{Z}$ let us denote by $\overline{v_{j}}$ the equivalence class of the element $v_j$ with respect to $\sim$. From Remark~\ref{local-periodicity-rem} we know that all elements of the same equivalence class have the same expected color. Therefore, we can define what we call the \emph{color of an equivalence class} to be the expected color of all its elements, and we denote this color of an equivalence class $\overline{v_i}$ by $\varphi(\overline{v_{i}}) := \varphi_e(v_i)$.

For $i \in \mathbb{Z}$ we say that the integer $i$, vertex $v_i$ or class $\overline{v_i}$ is a \emph{starting point} if $\varphi_e(v_i) \neq \varphi_e(v_{i-2})$. We recall that to prove the claim we must find integers $t_{\text{even}}$ and $t_{\text{odd}}$ such that any $i\in \mathbb{Z}$ is a starting point if and only if $i$ is congruent to $t_{\text{even}}$ or $t_{\text{odd}}$ modulo $2m$.

We now give a brief overview of the rest of the proof: We start by partitioning the vertex classes into sets of size $k-1$ which we call \emph{rings}. Then we will prove statements about the coloring using these rings. In particular, that every class only has two color options, each ring either contains only starting points or only non-starting points, and that every color is the color of exactly one class in each ring. Finally, we will look at the starting points in $2m$ consecutive vertices and prove that there is exactly one even starting point and one odd starting point in that set.

We know that vertices do not have the same color if they are adjacent. We will start by proving a similar statement for the equivalence classes.
        \begin{subclaim}\label{distance-edge-subclaim}
            Let $i,j\in\mathbb{Z}$. If there exists $d \in D$ such that $i - j \in \{d, n_{k,m} - d\} + n_{k,m}\mathbb{Z}$, then $\varphi(\overline{v_i}) \neq \varphi(\overline{v_j})$.
        \end{subclaim}
        \begin{subclaimproof}
            By the definition of $D$, for every $d\in D$ there exists some $d'\in D \cap [1, n_{k,m}]$ with $d\equiv d' \pmod{n_{k,m}}$. Thus, possibly replacing $d$ by $d'$ we may w.l.o.g. assume that $d\in D\cap [1,n_{k,m}]$ in the following. Let $v_{i_0} \in H_{n_{k,m} + 1}$ be such that $i_0 \equiv i \pmod{n_{k,m}}$.
            
            If $i - j \equiv d \pmod{n_{k,m}}$  we define $j_0 := i_0 - d$. Note that $v_{j_0} \in \overline{v_{j}}$, $v_{i_0} \in \overline{v_{i}}$, and $v_{i_0}, v_{j_0} \in S$. As $d_N(i_0, j_0) = d \in D$, we have that $v_{i_0}$ and $v_{j_0}$ are adjacent in $G$ (and thus in $G'$). Therefore, we can conclude $\varphi(\overline{v_i}) = \varphi_e(v_{i_0})=\phi(v_{i_0}) \neq \varphi(v_{j_0})=\varphi_e(v_{j_0})=\varphi(\overline{v_j})$, as desired.
            
            Similarly, if $i - j \equiv n_{k,m} - d \pmod{n_{k,m}}$ we define $j_0' := i_0 + d$. Again we have $v_{j_0'} \in \overline{v_{j}}$ and as before we can conclude from $d_N(i_0, j_0') = d \in D$ and $v_{i_0}, v_{j_0'}\in S$ that $\varphi(\overline{v_i}) \neq \varphi(\overline{v_j})$. 

            This concludes the proof of the claim.
        \end{subclaimproof}

        For every $i \in \mathbb{Z}$ we define the $i$th \emph{ring} as the following set of equivalence classes:
        \begin{equation}\notag
            R_i := \left\{\overline{v_{j}} \mid j\in\mathbb{Z},\,\, j \equiv i \pmod{2m}  \right\}. 
        \end{equation}

        We observe that as $n_{k,m} = 2(k-1)m$, every ring has $k-1$ elements. Furthermore, by definition we have $R_i = R_{i + 2m}$ for all $i \in \mathbb{Z}$, and the sets $R_1, R_2, \dots, R_{2m}$ partition the set of equivalence classes defined by $\sim$. We also define for all $i \in \mathbb{Z}$ a set $Q_i = R_i \cup R_{i+1}$ as the union of two consecutive rings.

        Figure \ref{basic-ring-fig} depicts the set of three consecutive rings $R_0 \cup R_1 \cup R_2$ for $k = 6$ on the left and $k = 8$ on the right. 

        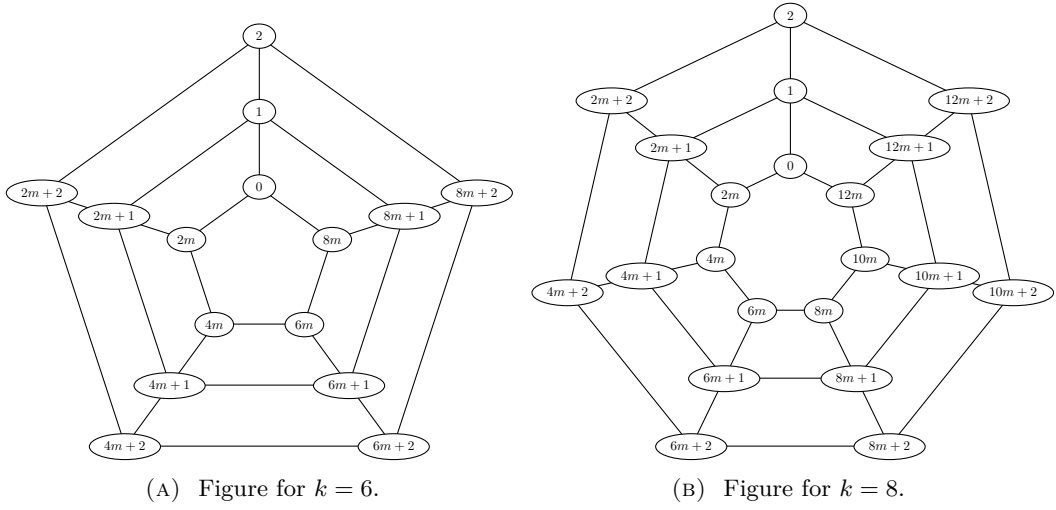
\begin{figure}[ht]

            \begin{subfigure}{0.4\textwidth}
                \begin{tikzpicture}
                    \node[name=l_0, regular polygon, regular polygon sides=5, minimum size = 2cm, draw] at (2,0) {};
                    \node[name=l_1, regular polygon, regular polygon sides=5, minimum size = 4cm, draw] at (2,0) {};
                    \node[name=l_2, regular polygon, regular polygon sides=5, minimum size = 6cm, draw] at (2,0) {};

                    \foreach \x in {1, ..., 5}
                    {
                        \draw (l_0.corner \x) -- (l_1.corner \x);
                        \draw (l_2.corner \x) -- (l_1.corner \x);
                    }

                        \foreach \x in {2,4,...,8}
                        {
                            \pgfmathparse{gcd(\x/2 + 1, 5040)}
                            \node[ellipse, draw, fill=white, scale=0.5] at (l_0.corner \pgfmathresult) {$\x m$};
                            \node[ellipse, draw, fill=white, scale=0.5] at (l_1.corner \pgfmathresult) {$\x m + 1$};
                            \node[ellipse, draw, fill=white, scale=0.5] at (l_2.corner \pgfmathresult) {$\x m + 2$};
                        }              
                        \node[ellipse, draw, fill=white, scale=0.5] at (l_0.corner 1) {\phantom{i}$0$\phantom{i}};
                        \node[ellipse, draw, fill=white, scale=0.5] at (l_1.corner 1) {\phantom{i}$1$\phantom{i}};
                        \node[ellipse, draw, fill=white, scale=0.5] at (l_2.corner 1) {\phantom{i}$2$\phantom{i}};
                    \end{tikzpicture}
                \caption{\label{basic-ring-fig-6} Figure for $k=6$.}
            \end{subfigure}
            \begin{subfigure}{0.4\textwidth}
                \begin{tikzpicture}
                    \node[name=l_0, regular polygon, regular polygon sides=7, minimum size = 2cm, draw] at (2,0) {};
                    \node[name=l_1, regular polygon, regular polygon sides=7, minimum size = 4cm, draw] at (2,0) {};
                    \node[name=l_2, regular polygon, regular polygon sides=7, minimum size = 6cm, draw] at (2,0) {};

                    \foreach \x in {1, ..., 7}
                    {
                        \draw (l_0.corner \x) -- (l_1.corner \x);
                        \draw (l_2.corner \x) -- (l_1.corner \x);
                    }

                    \foreach \x in {2,4,...,12}
                        {
                            \pgfmathparse{gcd(\x/2 + 1, 5040)}
                            \node[ellipse, draw, fill=white, scale=0.5] at (l_0.corner \pgfmathresult) {$\x m$};
                            \node[ellipse, draw, fill=white, scale=0.5] at (l_1.corner \pgfmathresult) {$\x m + 1$};
                            \node[ellipse, draw, fill=white, scale=0.5] at (l_2.corner \pgfmathresult) {$\x m + 2$};
                        }              
                        \node[ellipse, draw, fill=white, scale=0.5] at (l_0.corner 1) {\phantom{i}$0$\phantom{i}};
                        \node[ellipse, draw, fill=white, scale=0.5] at (l_1.corner 1) {\phantom{i}$1$\phantom{i}};
                        \node[ellipse, draw, fill=white, scale=0.5] at (l_2.corner 1) {\phantom{i}$2$\phantom{i}};
                \end{tikzpicture}
                \caption{\label{basic-ring-fig-8} Figure for $k=8$.}
            \end{subfigure}

            \caption{\label{basic-ring-fig} Figures illustrating $R_0 \cup R_1 \cup R_2$. A node with label $x$ corresponds to the equivalence class $\overline{v_{x}}$. Some edges are added between some classes to indicate that, due to Subclaim~\ref{distance-edge-subclaim} and $\{1, 2m\} \subseteq D$, they cannot have the same color.}
        \end{figure}

        Before proving the main statements about the coloring, we start by making observations about vertices of the same color in $Q_i$.

        \begin{subclaim}\label{four-two-subclaim}
            Let $i \in \mathbb{Z}$ and $c := \phi(\overline{v_i})$. Then every class $\overline{v_{j}}$ of color $c$ in $Q_i$ other than $\overline{v_{i}}$ satisfies
            \begin{equation}\notag
                \overline{v_{j}} \in \{\overline{v_{i + (k - 2)m}}, \overline{v_{i + (k - 2)m + 1}}, \overline{v_{i + km}}, \overline{v_{i + km + 1}}\}.
            \end{equation}
            In particular, for every $j \in \mathbb{Z}$, every $c\in [1,k-1]$ is the color of exactly two classes in $Q_j$.
        \end{subclaim}
        \begin{subclaimproof}
            For this proof it is helpful to have Figure~\ref{red-edge-fig} in mind.

            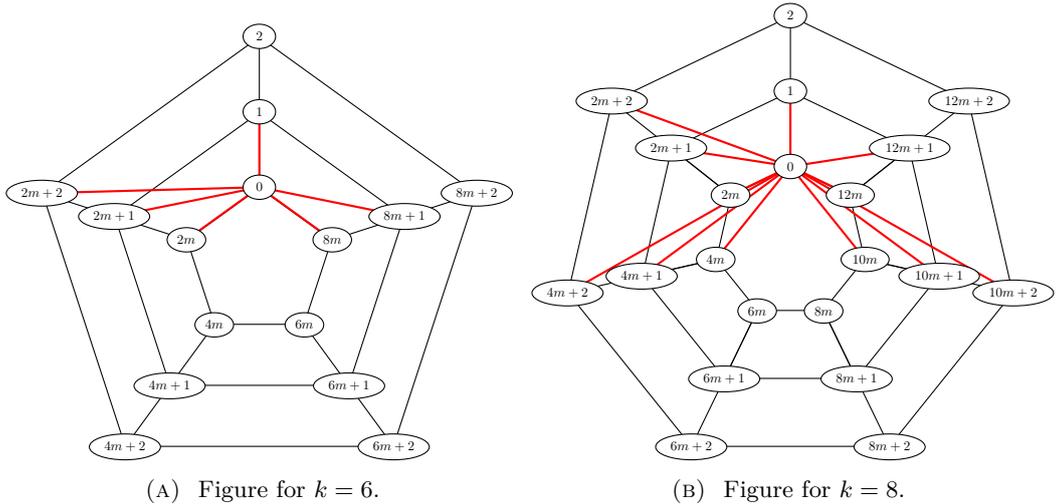
\begin{figure}[H]

                \begin{subfigure}{0.4\textwidth}

                    \begin{tikzpicture}
                        \node[name=l_0, regular polygon, regular polygon sides=5, minimum size = 2cm, draw] at (2,0) {};
                        \node[name=l_1, regular polygon, regular polygon sides=5, minimum size = 4cm, draw] at (2,0) {};
                        \node[name=l_2, regular polygon, regular polygon sides=5, minimum size = 6cm, draw] at (2,0) {};

                        \foreach \x in {1, ..., 5}
                        {
                            \draw (l_0.corner \x) -- (l_1.corner \x);
                            \draw (l_2.corner \x) -- (l_1.corner \x);
                        }

                        \pgfsetlinewidth{2\pgflinewidth}

                        \draw[color=red] (l_0.corner 1) -- (l_1.corner 2);
                        \draw[color=red] (l_0.corner 1) -- (l_1.corner 1);
                        \draw[color=red] (l_0.corner 1) -- (l_1.corner 5);
                        \draw[color=red] (l_0.corner 1) -- (l_2.corner 2);

                        \draw[color=red] (l_0.corner 1) -- (l_0.corner 2);
                        \draw[color=red] (l_0.corner 1) -- (l_0.corner 5);

                        \pgfsetlinewidth{0.5\pgflinewidth}

                        \foreach \x in {2,4,...,8}
                        {
                            \pgfmathparse{gcd(\x/2 + 1, 5040)}
                            \node[ellipse, draw, fill=white, scale=0.5] at (l_0.corner \pgfmathresult) {$\x m$};
                            \node[ellipse, draw, fill=white, scale=0.5] at (l_1.corner \pgfmathresult) {$\x m + 1$};
                            \node[ellipse, draw, fill=white, scale=0.5] at (l_2.corner \pgfmathresult) {$\x m + 2$};
                        }
                        \node[ellipse, draw, fill=white, scale=0.5] at (l_0.corner 1) {\phantom{i}$0$\phantom{i}};
                        \node[ellipse, draw, fill=white, scale=0.5] at (l_1.corner 1) {\phantom{i}$1$\phantom{i}};
                        \node[ellipse, draw, fill=white, scale=0.5] at (l_2.corner 1) {\phantom{i}$2$\phantom{i}};

                    \end{tikzpicture}

                    \caption{\label{red-edge-fig-6} Figure for $k=6$.}
                \end{subfigure}
                \begin{subfigure}{0.4\textwidth}

                    \begin{tikzpicture}
                        \node[name=l_0, regular polygon, regular polygon sides=7, minimum size = 2cm, draw] at (2,0) {};
                        \node[name=l_1, regular polygon, regular polygon sides=7, minimum size = 4cm, draw] at (2,0) {};
                        \node[name=l_2, regular polygon, regular polygon sides=7, minimum size = 6cm, draw] at (2,0) {};

                        \foreach \x in {1, ..., 7}
                        {
                            \draw (l_0.corner \x) -- (l_1.corner \x);
                            \draw (l_0.corner \x) -- (l_2.corner \x);
                        }

                        \pgfsetlinewidth{2\pgflinewidth}

                        \draw[color=red] (l_0.corner 1) -- (l_1.corner 1);
                        \draw[color=red] (l_0.corner 1) -- (l_1.corner 2);
                        \draw[color=red] (l_0.corner 1) -- (l_1.corner 3);
                        \draw[color=red] (l_0.corner 1) -- (l_1.corner 6);
                        \draw[color=red] (l_0.corner 1) -- (l_1.corner 7);
                        
                        \draw[color=red] (l_0.corner 1) -- (l_0.corner 2);
                        \draw[color=red] (l_0.corner 1) -- (l_0.corner 7);
                        \draw[color=red] (l_0.corner 1) -- (l_0.corner 3);
                        \draw[color=red] (l_0.corner 1) -- (l_0.corner 6);

                        \draw[color=red] (l_0.corner 1) -- (l_2.corner 2);
                        \draw[color=red] (l_0.corner 1) -- (l_2.corner 3);
                        \draw[color=red] (l_0.corner 1) -- (l_2.corner 6);

                        \pgfkeys{/pgf/number format/.cd,std,fixed zerofill,precision=0}

                        \pgfsetlinewidth{0.5\pgflinewidth}

                        \foreach \x in {2,4,...,12}
                        {
                            \pgfmathparse{gcd(\x/2 + 1, 5040)}
                            \node[ellipse, draw, fill=white, scale=0.5] at (l_0.corner \pgfmathresult) {$\x m$};
                            \node[ellipse, draw, fill=white, scale=0.5] at (l_1.corner \pgfmathresult) {$\x m + 1$};
                            \node[ellipse, draw, fill=white, scale=0.5] at (l_2.corner \pgfmathresult) {$\x m + 2$};
                        }              
                        \node[ellipse, draw, fill=white, scale=0.5] at (l_0.corner 1) {\phantom{i}$0$\phantom{i}};
                        \node[ellipse, draw, fill=white, scale=0.5] at (l_1.corner 1) {\phantom{i}$1$\phantom{i}};
                        \node[ellipse, draw, fill=white, scale=0.5] at (l_2.corner 1) {\phantom{i}$2$\phantom{i}};
                    \end{tikzpicture}

                    \caption{\label{red-edge-fig-8} Figure for $k=8$.}
                \end{subfigure}

                \caption{\label{red-edge-fig} Figures illustrating with red edges which classes in $R_0\cup R_1 \cup R_2$ must have a color different from that of $\overline{v_0}$ due to Subclaim~\ref{distance-edge-subclaim}. }
            \end{figure}


            We start by noting that $Q_i = \{\overline{v_{i}}, \overline{v_{i+1}}, \overline{v_{i+2m}}, \overline{v_{i+2m+1}}, \dots, \overline{v_{i + (2k - 4)m}}, \overline{v_{i+ (2k - 4)m+1}}\}$. As $1 \in D_1$, $[2m, (k - 4)m + 1] \subseteq D_2$, and $[(k + 2)m, \dots (2k - 4)m + 1] \subseteq D_3$ we can conclude from Subclaim~\ref{distance-edge-subclaim} that if $\overline{v_j} \in Q_i$, $\overline{v_{j}} \neq \overline{v_{i}}$ and $\varphi(\overline{v_j}) = c$ then $\overline{v_{j}} \in \{\overline{v_{i + (k - 2)m}}, \overline{v_{i + (k - 2)m + 1}}, \overline{v_{i + km}}, \overline{v_{i + km + 1}}\}$. This establishes the first part of the claim.
            Next we will prove that for all $j \in \mathbb{Z}$ the set $Q_j$ has exactly $2$ elements of each color. Note that there are $k - 1$ colors in total and $Q_j$ has $2(k - 1)$ elements. Therefore, it suffices to show that $Q_j$ contains at most two classes of each color.

            \noindent So let $c\in [1,k-1]$ be a color. We consider two cases depending on whether $R_j$ contains a class of color $c$.
            
            Suppose first that there is no class of color $c$ in $R_j$, and let $\overline{v_x} \in R_{j + 1}$ be some class of color $c$ (if there is no such class, we are already done). From the already established first part of the claim, we know that the remaining classes of color $c$ in $Q_{j + 1}$ are in 
            \begin{equation}\notag
                \{\overline{v_{x + (k - 2)m}}, \overline{v_{x + (k - 2)m + 1}}, \overline{v_{x + km}}, \overline{v_{x + km + 1}}\},
            \end{equation} 
            of which $\overline{v_{x + (k - 2)m + 1}}$ and $\overline{v_{x + km + 1}}$ are in $R_{j + 2}$. Therefore, all classes except $\overline{v_x}$ in $Q_j$ of color $c$ must be in $\{\overline{v_{x + (k - 2)m}}, \overline{v_{x + km}}\}$. Noting that $km - (k - 2)m = 2m \in D$, we can conclude from Subclaim~\ref{distance-edge-subclaim} that $\varphi(\overline{v_{x + km}}) \neq \varphi(\overline{v_{x + (k - 2)m}})$, so at most one of $\overline{v_{x + (k - 2)m}}, \overline{v_{x + km}}$ can have color $c$. This proves that there are at most two classes of color $c$ in $Q_j$ in this first case, as desired.

            Next let us consider the second case, in which there is at least one class of color $c$ in $R_j$ . Let $\overline{v_x}$ denote such a class. Towards a contradiction, suppose there are more than two classes of color $c$ in $Q_j$, and let $\overline{v_y}\neq \overline{v_z}$ denote two classes of color $c$ in $Q_j\setminus \{\overline{v_x}\}$. Without loss of generality, let $y, z \in x + [1, n_{k,m}]$ and $y < z$. Then we know from the already established first part of the claim that
            \begin{equation}\notag
                y, z \in \{x + (k - 2)m, x + (k - 2)m + 1, x + km, x + km + 1\}.
            \end{equation} 
            Then we have
            \begin{equation}\notag
                z - y \in \{1, 2m - 1, 2m, 2m + 1\}.
            \end{equation}
            In particular, $z - y \in D$ and by Subclaim~\ref{distance-edge-subclaim} we find that $\varphi(\overline{v_y}) \neq \varphi(\overline{v_z})$, contradicting the definitions of $y$ and $z$. This shows that our above assumption was false, indeed $Q_j$ contains at most two classes of color $c$ also in this second case.

            In conclusion, every color appears at most (and as argued above, therefore exactly) two times in every $Q_j$. This concludes the proof of the claim.
        \end{subclaimproof}

        Next we will use Sublcaim~\ref{four-two-subclaim} to restrict the color of every class to just two options.

        \begin{subclaim}\label{two-colors-subclaim}
            Let $i \in \mathbb{Z}$. Then
            \begin{equation}\notag
                \varphi(\overline{v_i}) \in \left\{\varphi(\overline{v_{i - 2}}), \varphi(\overline{v_{i + 2m - 2}})\right\}.
            \end{equation}
        \end{subclaim}
        \begin{subclaimproof}
            Towards a contradiction, suppose that $\varphi(\overline{v_i}) \notin \left\{\varphi(\overline{v_{i - 2}}), \varphi(\overline{v_{i + 2m - 2}})\right\}$ for some $i \in \mathbb{Z}$. We start by showing that there must be a class of color $c := \varphi(\overline{v_i})$ in $R_{i - 2}$: From Subclaim~\ref{four-two-subclaim} we know that both $Q_{i - 2}$ and $Q_{i - 1}$ contain exactly two classes of color $c$. Therefore, $R_{i-2}$ has the same number of classes of color $c$ as $R_i$, which is at least one by definition of $c$. 
            
            Let $\overline{v_{x}}$ be a class of color $c$ in $R_{i - 2}$ and let $\overline{v_{y}}$ be the (unique) other class of color $c$ in $Q_{i - 2}$. We note that 
            \begin{equation}\notag
                [2m + 2, (k - 4)m + 2] \cup [(k + 2)m + 2, (2k - 6)m + 2] \subseteq D. 
            \end{equation} 
            Therefore, applying Subclaim~\ref{distance-edge-subclaim} to the classes $\overline{v}_i, \overline{v}_x$ of the same color and using $i\equiv x + 2 \pmod{2m}$, we find
            \begin{equation}\notag
                i \in x + 2 + \{0, (k - 2)m, km, (2k-4)m\} + n_{k,m}\mathbb{Z}.
            \end{equation}
            By our assumption we have $c=\varphi(\overline{v_i}) \notin \left\{\varphi(\overline{v_{i - 2}}), \varphi(\overline{v_{i + 2m - 2}})\right\}$, and thus $x \notin \{i - 2, i + 2m - 2\} + n_{k,m}\mathbb{Z}$. Hence, we can further refine to
            \begin{equation}\label{i-from-x}
                i \in x + 2 + \{(k - 2)m, km\} + n_{k,m}\mathbb{Z}.
            \end{equation}
            Subclaim~\ref{four-two-subclaim} tells us that 
            \begin{equation}\label{y-from-x}
                y \in x + \{(k - 2)m, (k - 2)m + 1, km, km + 1 \} + n_{k,m}\mathbb{Z}.
            \end{equation}
            Taking the difference between (\ref{i-from-x}) and (\ref{y-from-x}) we obtain
            \begin{align}
                i - y &\in 2 + \{-2m - 1, -2m, -1, 0, 2m - 1, 2m\} + n_{k,m}\mathbb{Z} \\
                &= \{-2m + 1, -2m + 2, 1, 2, 2m + 1, 2m + 2\} + n_{k,m}\mathbb{Z}.\label{i-y}
            \end{align}
            We will now exclude all these options to arrive at a contradiction. We have $i - y \notin \{-2m + 2, 2\} + n_{k,m}\mathbb{Z}$ because our assumption $c=\varphi(\overline{v_i}) \notin \left\{\varphi(\overline{v_{i - 2}}), \varphi(\overline{v_{i + 2m - 2}})\right\}$ implies that $y \notin \{i - 2, i + 2m - 2\} + n_{k,m}\mathbb{Z}$. Since both $\overline{v_y}$ and $\overline{v_i}$ have color $c$, by Subclaim~\ref{distance-edge-subclaim} $i - y$ cannot be congruent to a distance in $D$ modulo $n_{k,m}$. As $\{1, 2m + 1, 2m + 2\} \subseteq D$ and $-2m + 1 \equiv (k - 4)m + 1 \in D$ we can conclude that
            \begin{equation}\notag
                i -y \notin \{-2m + 1, -2m + 2, 1, 2, 2m + 1, 2m + 2\} + n_{k,m}\mathbb{Z},
            \end{equation}
            contradicting (\ref{i-y}). In conclusion, our above assumption was wrong: Indeed, we have $\varphi(\overline{v_i}) \in \{\varphi(\overline{v_{i - 2}}), \varphi(\overline{v_{i + 2m - 2}})\}$ for all $i \in\mathbb{Z}$. This concludes the proof of the claim. 
        \end{subclaimproof}

        The next step can be phrased as saying that for every ring $R_i$, either all of its classes are starting points or none of them are.
        \begin{subclaim}\label{same-choice-subclaim}
            Let $i,j \in \mathbb{Z}$ be such that $i \equiv j \pmod{2m}$. Then $\varphi(\overline{v_i}) = \varphi(\overline{v_{i - 2}})$ if and only if $\varphi(\overline{v_j}) = \varphi(\overline{v_{j - 2}})$.
        \end{subclaim}
        \begin{subclaimproof}
            Towards a contradiction, let $i, j \in \mathbb{Z}$ such that $i \equiv j \pmod{2m}$, $\varphi(\overline{v_i}) \neq \varphi(\overline{v_{i - 2}})$ and $\varphi(\overline{v_j}) = \varphi(\overline{v_{j - 2}})$. By choosing appropriate representatives of the classes, using that $2m$ divides $n_{k,m}$, we may w.l.o.g. assume there exist such $i, j$ satisfying $i < j$. By considering the sequence $\overline{v_i}, \overline{v_{i+2m}},\ldots,\overline{v_j}$, we find that there has to exist an $x \in \mathbb{Z}$ such that $x \geq i$, $x \equiv i \pmod{2m}$, $\varphi(\overline{v_x}) \neq \varphi(\overline{v_{x-2}})$ and $\varphi(\overline{v_{x + 2m}}) = \varphi(\overline{v_{x+2m - 2}})$. Subclaim~\ref{two-colors-subclaim} tells us that as $\varphi(\overline{v_{x}}) \neq \varphi(\overline{v_{x - 2}})$, we have $\varphi(\overline{v_x}) = \varphi(\overline{v_{x + 2m - 2}})=\varphi(\overline{v_{x + 2m}})$. As $2m \in D$, this contradicts Subclaim~\ref{distance-edge-subclaim}. This shows that our above assumption was wrong, and we may conclude that $\varphi(\overline{v_i}) = \varphi(\overline{v_{i - 2}})$ if and only if $\varphi(\overline{v_j}) = \varphi(\overline{v_{j - 2}})$ for all $i, j$ satisfying $i \equiv j \pmod{2m}$, concluding the proof of the claim.
        \end{subclaimproof}

        Next, we will show that every color appears exactly once in each ring.
        \begin{subclaim}\label{each-color-ring-subclaim}
            For every color $c\in [1, k -1]$ and every $x \in \mathbb{Z}$ there exists exactly one class $\overline{v_{i}} \in R_x$ such that $\varphi(\overline{v_{i}}) = c$.
        \end{subclaim}
        \begin{subclaimproof}
            As there are $k-1$ colors and $k-1$ classes in $R_x$, it suffices to show that every color is the color of at most one class of $R_x$. Towards a contradiction, assume there exists a color $c$ and two distinct $\overline{v_i}, \overline{v_j} \in R_x$ such that $\varphi(\overline{v_i}) = c = \varphi(\overline{v_j})$.

            From Subclaim~\ref{four-two-subclaim} we know that 
            \begin{equation}\notag
                j \in i + \{(k-2)m, km\} + n_{k,m}\mathbb{Z}.
            \end{equation}

            If $j \equiv i + km \pmod{n_{k,m}}$ then, as $n_{k,m} = km + (k - 2)m$, we get that $i \equiv j + (k - 2)m \pmod{n_{k,m}}$. So, after swapping $i$ and $j$ if necessary, we may assume without loss of generality that $j \equiv i + (k-2)m \pmod{n_{k,m}}$.
            
            We will show that for all $l \in \mathbb{N}_0$ we have $\varphi(\overline{v_{i + 2l}}) = c = \varphi(\overline{v_{j + 2l}})$ by induction: The base case of $l=0$ is true by definition of $i$ and $j$. Now let $l > 0$ and assume that $\varphi(\overline{v_{i + 2l - 2}}) = c = \varphi(\overline{v_{j + 2l - 2}})$. 

            Consider the class $\overline{v_{j + 2l - 2m}}$. By Subclaim~\ref{two-colors-subclaim} we have 
            \begin{equation}\label{next-color-option-eq}
                \varphi(\overline{v_{j + 2l - 2m}}) \in \{\varphi(\overline{v_{j + 2l - 2m - 2}}), \varphi(\overline{v_{j + 2l - 2}})\}.
            \end{equation}
            By our assumption on $i$ and $j$ we have $j + 2l - 2m \equiv i + 2l + (k - 4)m \pmod{n_{k,m}}$. Using Subclaim~\ref{distance-edge-subclaim} and that $(k - 4)m + 2 \in D_2$ we obtain: 
            \begin{equation}\notag
                \varphi(\overline{v_{j + 2l - 2m}})=\varphi(\overline{v_{i + 2l + (k - 4)m}}) \neq \varphi(\overline{v_{i + 2l - 2}}) = c = \varphi(\overline{v_{j + 2l - 2}}),
            \end{equation}
                where the last two equalities use the induction hypothesis.
                Then we get from (\ref{next-color-option-eq}) that $\varphi(\overline{v_{j + 2l - 2m}}) = \varphi(\overline{v_{j + 2l - 2m - 2}})$. From Subclaim~\ref{same-choice-subclaim} it follows that 
            \begin{equation}\notag
                \varphi(\overline{v_{j + 2l}}) = \varphi(\overline{v_{j + 2l - 2}}) = c 
            \end{equation}
            and
            \begin{equation}\notag
                \varphi(\overline{v_{i + 2l}}) = \varphi(\overline{v_{i + 2l - 2}}) = c.
            \end{equation}

            This establishes the inductive claim for $l$. By the principle of induction, we have now proved that for all $l \in \mathbb{N}_0$ we have $\varphi(\overline{v_{i + 2l}}) = c = \varphi(\overline{v_{j + 2l}})$, so in particular $\varphi(\overline{v_{i + 2m}}) = \varphi(\overline{v_{i}})$. This contradicts Subclaim~\ref{distance-edge-subclaim} because $2m \in D$. Therefore, our initial assumption was wrong, there do not exist two classes in $R_x$ of the same color. As explained at the start, this implies that each color appears exactly once in $R_x$ for every $x \in \mathbb{Z}$, concluding the proof of the claim.
        \end{subclaimproof}

        Finally, we will conclude the proof of Claim~\ref{structure-claim} by showing that there is an even integer $t_{\text{even}} \in \mathbb{Z}$ and an odd integer $t_{\text{odd}} \in \mathbb{Z}$ such that $\varphi_e(v_i) \neq \varphi_e(v_{i - 2})$ if and only if $i \in \{t_{\text{even}}, t_{\text{odd}}\} + 2m\mathbb{Z}$ for every $i\in \mathbb{Z}$. 

        By Subclaim~\ref{same-choice-subclaim} and since $\phi_e(v_i)=\phi(\overline{v_i})$ for every $i\in \mathbb{Z}$, it follows that for every $x,y \in \mathbb{Z}$ such that $x \equiv y \pmod{2m}$ we have $\varphi_e(v_{x}) \neq \varphi_e(v_{x -2})$ if and only if $\varphi_e(v_{y}) \neq \varphi_e(v_{y-2})$. Therefore, it is sufficient to prove that there exists an even and odd number $t_{\text{even}},t_{\text{odd}}\in [2,2m+1]$ such that for all $x \in [2, 2m+1]$ we have $\varphi_e(v_x) \neq \varphi_e(v_{x - 2})$ if and only if $x \in \{t_{\text{even}}, t_{\text{odd}}\}$.

        We can restate this again by splitting the integers by parity: We have to prove that for $x \in \{0, 1\}$ there exists exactly one $t \in x + \{2, 4, \dots, 2m\} = x + 2 + F$ such that $\varphi_e(v_t) \neq \varphi_e(v_{t-2})$.

        Let $x \in \{0, 1\}$ and let $t_1 < t_2 < \dots < t_s$ be all integers $t_i$ in $x + F+2$ satisfying $\varphi_e(v_{t_i}) \neq \varphi_e(v_{t_i-2})$. As stated above, our goal is to prove that $s = 1$. 

        Note that $s \geq 1$: As $2m \in D$, it follows from Subclaim~\ref{distance-edge-subclaim} that $\varphi_e(v_x) \neq \varphi_e(v_{x + 2m})$. Therefore, there must be some $t \in x + 2 + F$ satisfying $\varphi_e(v_t) \neq \varphi_e(v_{t - 2})$.

        It remains to prove that $s \leq 1$. 
        To do so, it will be useful to first understand the (expected) colors of the $v_{t_i}$. 

        For every $i \in [1, s]$, by definition of the $t_i$, we have $\varphi_e(v_{t_i}) \neq \varphi_e(v_{t_i - 2})$. From Subclaim~\ref{two-colors-subclaim} (using $\phi_e(v_x)=\phi(\overline{v_x})$ for all $x\in \mathbb{Z}$) it then follows that $\varphi_e(v_{t_i}) = \varphi_e(v_{t_i + 2m - 2}).$
        Using the definition of the $t_i$ again, we know that for all $y \in x + 2 + F$ such that $y \notin T := \{t_1, \dots, t_s\}$ the expected color satisfies $\varphi_e(v_{y}) = \varphi_e(v_{y -2})$. 

        From Subclaim~\ref{same-choice-subclaim} it follows that for all $y \in x + 2\mathbb{Z}$
        \begin{equation}\label{recursive-colors-eq}
            \varphi_e(v_y) =
            \begin{cases}
                \varphi_e(v_{y - 2}) &\text{if } y \notin T + 2m\mathbb{Z}\\ 
                \varphi_e(v_{y + 2m - 2}) &\text{if } y \in T + 2m\mathbb{Z}.
            \end{cases}
        \end{equation}
        
        For convenience, we set $t_0 := x$ but keep in mind that $t_0 \notin T$.

        From (\ref{recursive-colors-eq}) we obtain for all $i \in [1, s]$, $t \in [t_{i - 1} + 2, t_{i}]$ satisfying $t \in x + 2\mathbb{Z}$ and for all $j \in \mathbb{Z}$: 
        \begin{equation}\notag
            \varphi_e(v_{t + 2mj}) = \varphi_e(v_{t - 2 + 2mj}) = \dots = \varphi_e(v_{t_{i - 1} + 2mj}). 
        \end{equation}

        In particular, setting $t = t_i - 2$ and using~(\ref{recursive-colors-eq}) again, we have, for all $i \in [1, s]$ and all $j \in \mathbb{Z}$:
        \begin{equation}\label{next-line-color-eq}
            \varphi_e(v_{t_i + 2mj}) = \varphi_e(v_{t_i + 2m(j + 1) - 2}) = \varphi_e(v_{t_{i - 1} + 2m(j + 1)}).
        \end{equation}

        By induction on $i$ we obtain from (\ref{next-line-color-eq}) that for all $i \in [0, s]$:
        \begin{equation}\label{starting-colors-eq}
            \varphi_e(v_{t_i}) = \varphi_e(v_{t_0 + 2mi}).
        \end{equation}

        By Subclaim~\ref{each-color-ring-subclaim}, the expected colors in $R_{t_0}$, namely $$\phi_e(v_{t_0})=\phi(\overline{v_{t_0}}), \phi_e(v_{t_0+2m})=\phi(\overline{v_{t_0+2m}}),\ldots,\phi_e(v_{t_0+2m(k-2)})=\phi(\overline{v_{t_0+2m(k-2)}})$$ are pairwise distinct and together cover all the colors in $[1,k-1]$. Using this, the $n_{k,m}=2(k-1)m$-periodicity of $\phi_e$ and~(\ref{starting-colors-eq}), we obtain for all $i\in [0,s]$ and $j\in \mathbb{Z}$:
        \begin{equation}\label{color-mod-k-1-eq}
            \varphi_e(v_{t_i}) = \varphi_e(v_{t_0 + 2mj}) \iff i \equiv j \pmod{k-1}.
        \end{equation}

        Recall that $s \geq 1$. From (\ref{recursive-colors-eq}) it follows that $\varphi_e(v_{t_0 + 2m}) = \varphi_e(v_{t_s})$ because $t_s \leq x+2m=t_0+2m$ by definition and for all $t \in [t_s+2, t_0 + 2m]$ of the same parity as $t_0=x$ (and thus the same parity as $t_s$) we have $\varphi_e(v_t) = \varphi_e(v_{t-2})$. Therefore, we can use (\ref{color-mod-k-1-eq}) to conclude that $s \equiv 1 \pmod{k - 1}$. 
        
        We can now prove that $s \leq 1$: Towards a contradiction, suppose that $s > 1$. Note that since $s\equiv 1 \pmod{k-1}$, this implies $s\ge k$. 
        From Subclaim~\ref{each-color-ring-subclaim} we know that there is exactly one $\overline{v_{y_0}} \in R_{t_0}$ satisfying $\varphi_e(v_{y_0}) = \varphi_e(v_{t_0 + 1})$. Let $j \in [1, k-1]$ be such that $y := t_0 + 2mj \equiv y_0 \pmod{n_{k,m}}$. Then by (\ref{starting-colors-eq}) (which is applicable since $j\le k-1\le s$), we have:
        \begin{equation}\notag
            \varphi_e(v_{t_j}) = \varphi_e(v_{t_0 + 2mj}) = \varphi_e(v_{y_0})=\varphi_e(v_{t_0+1}).
        \end{equation}

        As $t_j \in t_0 + 2 + F$, we have $t_j - (t_0 + 1)\in F+1 =D_1$, contradicting Subclaim~\ref{distance-edge-subclaim}. 

As argued above, this implies the existence of an even integer $t_{\text{even}} \in \mathbb{Z}$ and an odd integer $t_{\text{odd}} \in \mathbb{Z}$ such that $\varphi_e(v_i) \neq \varphi_e(v_{i - 2})$ if and only if $i \in \{t_{\text{even}}, t_{\text{odd}}\} + 2m\mathbb{Z}$ for every $i\in \mathbb{Z}$. This establishes the statement of Claim~\ref{structure-claim}, concluding its proof also in the remaining cases of $k\in \{6,8\}$.

    \end{claimproof}


\begin{thebibliography}{99}
\bibitem{bloom} T. Bloom. Erd\H{o}s problem $\#$944. \emph{Online-database for Erd\H{o}s' problems}, \url{https://www.erdosproblems.com/944}.
\bibitem{brown} J. I. Brown. A vertex-critical graph without critical edges. \emph{Discrete Math.}, 102(1), 99--101 (1992). 
\bibitem{chungraham} F. Chung and R. Graham. \textbf{Erd\H{o}s on graphs: His legacy of unsolved problems}. CRC Press (1998). 
\bibitem{chungrahamonline} F. Chung. Vertex-critical graphs with many extra edges. \emph{Online-database for Erd\H{o}s' problems on graphs}, \url{https://mathweb.ucsd.edu/~erdosproblems/erdos/newproblems/NoncriticalEdges.html}.
\bibitem{conlonfox} D. Conlon and J. Fox. Bounds for graph regularity and removal lemmas. \emph{Geom. Funct. Anal.}, 22, 1191--1256 (2012). 
\bibitem{dirac} G. A. Dirac. A property of $4$-chromatic graphs and some remarks on critical graphs. \emph{J. Lond. Math. Soc.}, s1-27, 85--92, 1952.
\bibitem{erdos} P. Erd\H{o}s. On some aspects of my work with Gabriel Dirac. In: Graph Theory in Memory of G. A. Dirac, Sandbjerg, 1985, \emph{Ann. Discrete Math.}, 41, 111-116, North-Holland, Amsterdam-New York, 1989. 
\bibitem{hajos} G. Haj\'{o}s. \"{U}ber eine Konstruktion nicht n-f\"{a}rbbarer Graphen. \emph{Wiss. Z. Martin Luther
Univ. Halle-Wittenberg, Math.-Natur. Reihe}, 10, 116--117 (1961).
\bibitem{jensen} T. R. Jensen. Dense critical and vertex-critical graphs. \emph{Discrete Math.}, 258 (1--3), 63--84 (2002). 
\bibitem{jensen2} T. R. Jensen. Structure of critical graphs. \emph{PhD thesis}, Odense University, Denmark (1996). 
\bibitem{jensensiggers} T. R. Jensen and M. Siggers. On a question of Dirac on critical and vertex-critical graphs. \emph{Sib. \`Elektron. Mat. Izv.}, 9, 156--160 (2012). 
\bibitem{jensentoft} T. R. Jensen and B. Toft. \textbf{Graph coloring problems}. John Wiley and Sons, Wiley-Interscience Series in Discrete Mathematics and Optimization, R. L. Graham, J. K. Lenstra, R. E. Tarjan (eds.)  (1995).
\bibitem{johansson} A. Johansson, J. Kahn, V. Vu. Factors in random graphs. \emph{Random Struct. \& Algorithms}, 33(1), 1--28 (2008).
\bibitem{kahn} J. Kahn. Asymptotics for Shamir's problem. \emph{Adv. Math.}, 422, 109019 (2023).
\bibitem{kostochka} A.~V. Kostochka. Color-critical graphs and hypergraphs with few edges: a survey. In: More sets, graphs and numbers, \emph{Bolyai Soc. Math. Stud.}, 15, 175--197, Springer, Berlin (2006).
\bibitem{lattanzio} J. J. Lattanzio. A note on a conjecture of Dirac. \emph{Discrete Math.}, 258 (1--3), 323--330 (2002).
\bibitem{martinsson} A. Martinsson and R. Steiner. Vertex-critical graphs far from edge-criticality. \emph{Comb. Probab. Comput.}, 34, 151--157 (2025). 
\bibitem{norin} S. Norin. Recent progress towards Hadwiger's conjecture. \emph{Proc. Int. Congr. Math.}, 6, 4606--4620 (2022).
\bibitem{parkpham} J. Park and H. T. Pham. A proof of the Kahn-Kalai conjecture. \emph{J. Amer. Math. Soc.}, 37(1), 235--243 (2024). 
\bibitem{scott} A. Scott. Graphs of large chromatic number. \emph{Proc. Int. Congr. Math.}, 6, 4660--4681 (2022).
\bibitem{simonovits} M. Simonovits. A method for solving extremal problems in graph theory, stability problems. In \emph{Theory of Graphs (Proc. Colloq., Tihany, 1966)}, Academic Press, New York, London, 279--319 (1968).
\bibitem{stiebitz} M. Stiebitz, T. Schweser and B. Toft. \textbf{Brooks' Theorem: Graph Coloring and Critical Graphs}. Springer Nature, 2024.
\bibitem{wang} J. Wang. Infinite family from each vertex $k$-critical graph without any critical edge. In: Du, DZ., Hu, X., Pardalos, P.M. (eds.),  \emph{Combinatorial Optimization and Applications. COCOA 2009}, Lecture Notes in Computer Science, 5573, 238--248 (2009).
\end{thebibliography}
\end{document}